\numberwithin{equation}{section}
\newtheorem{theorem}{Theorem}[section]
\newtheorem{lemma}[theorem]{Lemma}
\newtheorem{proposition}[theorem]{Proposition}
\newtheorem{corollary}[theorem]{Corollary}
\theoremstyle{definition}
\newtheorem{definition}[theorem]{Definition}
\newtheorem{definition and remark}[theorem]{Definition and Remark}
\newtheorem{remark}[theorem]{Remark}
\newtheorem{remark and definition}[theorem]{Remark and Definition}
\newtheorem{remark and notation}[theorem]{Remark and Notation}
\newtheorem{notation and remark}[theorem]{Notation and Remark}
\newtheorem{notation and convention}[theorem]{Notation and Convention}
\newtheorem{reminder}[theorem]{Reminder}
\newtheorem{notation and remarks}[theorem]{Notation and Remarks}
\newtheorem{notation and reminder}[theorem]{Notation and Reminder}
\newtheorem{example}[theorem]{Example}
\newtheorem{construction and examples}[theorem]{Construction and Examples}
\newtheorem{problem and remark}[theorem]{Problem and Remark}
\newtheorem{convention and notation}[theorem]{Convention and Notation}
\newtheorem{convention and remark}[theorem]{Convention and Remark}
\newcommand\Proj{\operatorname{Proj}}
\newcommand\Hom{\operatorname{Hom}}
\newcommand\Tor{\operatorname{Tor}}
\newcommand\depth{\operatorname{depth}}
\newcommand\reg{\operatorname{reg}}
\newcommand\Ker{\operatorname{Ker}}
\newcommand\Coker{\operatorname{Coker}}
\newcommand\e{\operatorname{e}}
\newcommand\Nor{\operatorname{Nor}}
\newcommand\Sing{\operatorname{Sing}}
\newcommand\sreg{\operatorname{sreg}}
\newcommand\sat{\operatorname{sat}}
\newcommand\Reg{\operatorname{Reg}}
\newcommand\CM{\operatorname{CM}}
\newcommand\length{\operatorname{length}}
\begin{document}

\title[ON SURFACES OF MAXIMAL SECTIONAL REGULARITY]
         {ON SURFACES OF MAXIMAL SECTIONAL REGULARITY}

\author{Markus BRODMANN, Wanseok LEE, Euisung PARK, Peter SCHENZEL}

\address{Universit\"at Z\"urich, Institut f\"ur Mathematik, Winterthurerstrasse 190, CH -- Z\"urich, Switzerland}
\email{brodmann@math.unizh.ch}

\address{Pukyong National University, Department of applied Mathematics, Daeyeon Campus 45, Yongso-ro, Nam-Gu, Busan, Republic of Korea}
\email{wslee@pknu.ac.kr}

\address{Korea University, Department of Mathematics, Anam-dong, Seongbuk-gu, Seoul 136-701, Republic of Korea}
\email{euisungpark@korea.ac.kr}

\address{Martin-Luther-Universit\"at Halle-Wittenberg,
Institut f\"ur Informatik, Von-Secken\-dorff-Platz 1, D -- 06120
Halle (Saale), Germany} \email{schenzel@informatik.uni-halle.de}

\date{Busan, Halle, Seoul and Z\"urich, \today}

\subjclass[2]{Primary: 14H45, 13D02.}

\keywords{Castelnuovo-Mumford regularity, variety of maximal sectional regularity,
extremal locus, extremal variety}

\begin{abstract} We study projective surfaces $X \subset \mathbb{P}^r$ (with $r \geq 5$) of maximal sectional regularity and degree $d > r$, hence surfaces for which the
Castelnuovo-Mumford regularity $\reg(\mathcal{C})$ of a general hyperplane section curve $\mathcal{C} = X \cap \mathbb{P}^{r-1}$ takes the maximally possible value $d-r+3$. We
use the classification of varieties of maximal sectional regularity of \cite{BLPS1} to see that these surfaces are either particular divisors on a smooth rational $3$-fold scroll $S(1,1,1)\subset \mathbb{P}^5$, or else admit a plane $\mathbb{F} = \mathbb{P}^2 \subset \mathbb{P}^r$ such that $X \cap \mathbb{F} \subset \mathbb{F}$ is a pure curve of degree $d-r+3$. We show that our surfaces are either cones over curves of maximal regularity, or almost non-singular projections of smooth rational surface scrolls. We use this to show that the Castelnuovo-Mumford regularity of such a surface $X$ satisfies the equality $\reg(X) = d-r+3$ and we compute or estimate various of the cohomological invariants as well as the Betti numbers of such surfaces. We also study the geometry of extremal secant lines of our surfaces $X$, more precisely the closure $\Sigma(X)$ of the set of all proper extremal secant lines to $X$ in the Grassmannian $\mathbb{G}(1, \mathbb{P}^r).$
\end{abstract}

\maketitle \thispagestyle{empty}

\section{Introduction}

\subsection*{Varieties of maximal sectional regularity}  In \cite{BLPS1} we have studied and classified projective varieties $X \subset \mathbb{P}^r$ of dimension $n \geq 2$, of codimension $c \geq 3$ and of degree $d \geq c+3$ which are of maximal sectional regularity, which means that the Castelnuovo-Mumford regularity $\reg(\mathcal{C})$ of a general linear curve section $\mathcal{C} = X \cap \mathbb{P}^{c+1} \subset \mathbb{P}^r$ of $X$ takes the maximally possible value $d-c+1$. There are two possible types of such varieties, namely (see also Theorem~\ref{theorem 2.1} below):\\
Either it holds $c=3$ and $X \approx H + (d-3)F$ is a divisor on a rational $(n+1)$-fold scroll $W \subset \mathbb{P}^{n+3}$ with $n-3$-dimensional vertex, where $H \subset W$ is the hyperplane divisor and $F = \mathbb{P}^n \subset W$ is a linear $n$-space; \\
or else, there is linear subspace $\mathbb{F} = \mathbb{F}(X) = \mathbb{P}^n \subset \mathbb{P}^r$ such that $X \cap \mathbb{F} \subset \mathbb{F}$ is a hypersurface of degree $d-c+1$.\\
If $X$ is of type II, the $n$-space $\mathbb{F}(X)$ is unique and coincides with the so-called extremal variety of $X$, that is the closed union of all lines in $\mathbb{P}^r$ which are $(d-c+1)$-secant to a general curve section of $X$.
Moreover, if the (algebraically closed) base field $\Bbbk$ is of characteristic $0$ or if $n = 2$, each variety $X \subset \mathbb{P}^r$ of maximal sectional regularity is sectionally smooth rational (in the sense of Section 2 below)  and hence is an almost non-singular linear projection of a rational $n$-fold scroll $\widetilde{X} \subset  \mathbb{P}^{d+n-1}$ (see Theorem~\ref{theorem 2.5}). In addition, this projecting scroll $\widetilde{X}$ is singular if and only if $X$ is a cone.

\subsection*{Surfaces of maximal sectional regularity} In this paper we focus on the case in which $n=2$, hence the case in which $X$ is a surface of maximal sectional regularity, and we shall investigate in detail the structure of $X$. In this case, the above two possible types present themselves as follows (see Corollary~\ref{corollary 2.1'} below):\\
 \textit{Type I:} It holds $r = 5$ and $X \approx H + (d-3)F$ is a smooth divisor on the smooth rational $3$-fold scroll $W = S(1,1,1) \subset \mathbb{P}^5$, where $H \subset W$ is the hyperplane divisor and $F = \mathbb{P}^2 \subset W$ is a ruling plane. \\
\textit{Type II:} There is a plane $\mathbb{F} = \mathbb{P}^2 \subset \mathbb{P}^r$ such that $X \cap \mathbb{F} \subset \mathbb{F}$ is a pure curve of degree $d-r+3$. \\ Moreover in this situation either the projecting surface scroll $\widetilde{X} \subset \mathbb{P}^{d+1}$ is smooth or $X$ is a cone over a curve of maximal regularity. It turns out that the surfaces in question have a rich geometric, homological and cohomological structure, which we aim to investigate in this paper.

\subsection*{Preview of results} \textit{Section 2:} We give a few preliminaries, which mainly rely on results established in \cite{BLPS1}. We also establish a bound for the Castelnuovo-Mumford regularity of varieties which are almost non-singular projections of varieties satisfying the Green-Lazarsfeld property $N_{2,p}$ (see Theorem~\ref{2.5' Theorem}). We shall revisit the extremal secant locus $\Sigma(X)$ of an arbitrary non-degenerate irreducible projective variety $X \subset \mathbb{P}^r$ of degree $d$ and codimension $c \geq 2$, that is the closure of the set of all proper $(d-c+1)$-secant lines to $X$ in the Grassmannian $\mathbb{G}(1,\mathbb{P}^r)$ of all lines in $\mathbb{P}^r$. This locus is particularly interesting if $X$ is a surface of extremal regularity and hence satisfies the inequality $\mathrm{reg}(X) \geq d-r+3$. We also consider the so-called special extremal locus ${}^*\Sigma(X)$ of a variety $X$ of maximal sectional regularity, hence the closure of the set of all lines in $\mathbb{G}(1,\mathbb{P}^r)$ which are $(d-r+3)$-secant to a general curve section of $X$. We show that this latter locus has dimension $2$, if $X$ is a surface of maximal sectional regularity (see Proposition~\ref{prop:dimspecextrloc}). \\
\textit{Section 3:} We study sectionally smooth rational surfaces, hence surfaces whose general curve section is smooth and rational -- a property which holds for surfaces of maximal sectional regularity.  As sectionally smooth rational surfaces are almost non-singular projections of surface scrolls, they have a number of interesting properties and their cohomology is quite  well understood. In particular, they satisfy the conjectural regularity bound of Eisenbud-G\^oto \cite{EG} (see Theorem~\ref{theorem 2.9}). The results of this section will pave our way for a more detailed investigation of surfaces of maximal sectional regularity.\\
\textit{Section 4}: We investigate surfaces of maximal sectional regularity of type I. In particular, we compute their Betti tables (see Theorem~\ref{t1-betti}) and their cohomological Hilbert functions (see Theorem~\ref{t1-coh}). Moreover we show that the special extremal secant locus of such surfaces coincides with their extremal locus, and we show that these loci become Veronese Surfaces in a projective $5$-space under the Pl\"ucker embedding (see Proposition~\ref{prop:extseclocI}). \\
\textit{Section 5:} We study surfaces of maximal sectional regularity which fall under type II. We notably investigate the cohomological invariants and the cohomology tables of these surfaces (see Theorem~\ref{4.14'' Theorem} and Corollary~\ref{coro:cohomology}). If $X$ is a variety of maximal sectional regularity of dimension $n \geq 2$, which falls under type II, the union $Y := X \cup \mathbb{F}(X)$ of $X$ with its extremal variety $\mathbb{F}(X) = \mathbb{P}^n \subset \mathbb{P}^r$ plays a crucial role. As an application of Theorem~\ref{4.14'' Theorem}, we shall establish a lower bound on the number of defining quadrics of a variety of maximal sectional regularity $X$ of type II with arbitrary dimension $n \geq 2$ -- a bound which is sharp if and only $Y$ is arithmetically Cohen-Macaulay (see Corollary~\ref{cor:higherdim}). Finally, in the surface case, we give a comparison result for the Betti numbers of $X$ and $Y$ (see Proposition~\ref{prop:BettiNumbers}). \\
\textit{Section 6:} We study the index of normality $N(X)$ of a surface $X$ of maximal sectional regularity which falls under type II. In all examples we found, this index is sub-maximal and hence satisfies the inequality $N(X) \leq d-r$. In Theorem~\ref{4.17'' Proposition} we give various conditions which are equivalent to the mentioned sub-maximality of $N(X)$. This sub-maximality notably implies that the homogeneous vanishing ideal of the union $Y = X \cup \mathbb{F}(X)$ is generated in degrees ~$\leq d-r+2$.~This latter property allows to draw conclusions on the geometry of extremal secant lines to $X$ (see Remark~\ref{remark:normality}). We also revisit surfaces of degree $r+1$ in $\mathbb{P}^r$ and prove, that two of the eleven cases listed in \cite{B2} and \cite{BS6} may indeed not occur, as conjectured (see Remark~\ref{3.3' Remark}). \\
\textit{Section 7:} This section is devoted to examples and open
problems. We first provide examples of large families of surfaces of
extremal regularity which are not of maximal sectional regularity
and whose extremal secant locus is of any dimension in the maximally
possible range $\{-1,0,1\}.$ (see Construction and Examples~\ref{7.1
Construction and Examples}). This is of some interest, as the paper
\cite{GruLPe} let to the expectation that ``there are only a few
``exceptional" varieties of extremal regularity without extremal
secant lines". We also provide some examples which make explicit the
Betti tables in the case of surfaces of maximal sectional regularity
which fall under type I (see Example~\ref{example:t1}). We also
suggest a general construction principle which provides large
classes of surfaces of maximal sectional regularity of type II (see
Construction and Examples~\ref{7.2 Construction and Examples}). We
use this principle to produce explicit examples for which we compute
the Betti tables (see Examples~\ref{7.3 Example}, ~\ref{7.4 Example}
and ~\ref{7.5 Example}). Finally, we give some conclusive remarks
and suggest a few open problems, which are related to the previously
mentioned question on the sub-maximality of the index of normality
(see Problems and Remark~\ref{7.6 Problem and Remark}).

\section{Preliminaries}

\subsection*{The classification of varieties of maximal sectional regularity}

Let $X \subset \mathbb{P}^r$ be a non-degenerate irreducible projective variety of dimension $n \geq 2$, codimension $c \geq 3$ and degree $d \geq c+3$. We recall the following classification result on varieties of maximal sectional regularity, which was established in \cite[Theorem 7.1]{BLPS1}

\begin{theorem}
\label{theorem 2.1}
If either $n = 2$ or $\mathrm{Char}(\Bbbk) = 0$, the variety $X \subset \mathbb{P}^r$ is of maximal sectional regularity if and only if it falls under one of the following two types:
\begin{itemize}
\item[{}]{\bf \textit{Type I:}} $c=3$ and $X$ is a divisor of the $(n+1)$-fold scroll
$$W := S(\underbrace{0,\ldots,0}_{(n-2)-\rm{times}},1,1,1) \subset \mathbb{P}^{n+3}$$
with $X \approx H + (d-3)F$, where $H$ is the hyperplane divisor of $W$ and $F \subset W$ is a linear subspace of dimension $n$.
\item[{}]{\bf \textit{Type II:}} There exists an $n$-dimensional linear subspace $\mathbb{F} = \mathbb{P}^n \subset \mathbb{P}^r$ such that $X \cap \mathbb{F} \subset \mathbb{F}$ is a hypersurface of degree $d-c+1$.
\end{itemize}
\end{theorem}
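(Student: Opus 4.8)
The plan is to reduce the statement to the well-understood theory of \emph{curves} of maximal regularity and then to lift the resulting structure back to $X$. First I would observe that, by the very definition, $X$ is of maximal sectional regularity precisely when a general linear curve section $\mathcal{C} = X \cap \mathbb{P}^{c+1}$ is a non-degenerate integral curve of degree $d$ in $\mathbb{P}^{c+1}$ whose regularity attains the Gruson--Lazarsfeld--Peskine bound $\reg(\mathcal{C}) = d-c+1$. This is the place where the hypothesis ``$n=2$ or $\Char(\Bbbk)=0$'' is needed: it guarantees, via a Bertini-type generic-smoothness argument, that iterated general hyperplane sections of $X$ remain integral (and, in the relevant range, smooth rational), so that the passage from $X$ to $\mathcal{C}$ loses no essential information and the curve-theoretic results apply.

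Next I would invoke the classification of curves attaining the maximal regularity bound. Such a curve $\mathcal{C}$ possesses a $(d-c+1)$-secant line $L$, its \emph{extremal secant line}, whose position relative to $\mathcal{C}$ is tightly constrained, and which is essentially unique in the generic situation. The central device is then the \emph{extremal variety}: as the cutting subspace $\mathbb{P}^{c+1}$ is allowed to vary (equivalently, as one undoes one step of the sectioning), the extremal secant lines sweep out a determinate closed subvariety $\mathbb{F}(X) \subset \mathbb{P}^r$. I would show that this sweeping locus is either a single linear subspace $\mathbb{P}^n$ or carries the structure of the rulings of a rational normal scroll.

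The dichotomy of the theorem is then read off from the geometry of $\mathbb{F}(X)$. In the first case the extremal secant lines fill out an $n$-plane $\mathbb{F} = \mathbb{P}^n$; intersecting with $X$ and a degree count force $X \cap \mathbb{F}$ to be a hypersurface of degree $d-c+1$ in $\mathbb{F}$, which is Type~II. In the second case the ruled structure assembles $X$ as a divisor $H+(d-3)F$ on the scroll $W = S(0,\ldots,0,1,1,1)$, where a degree and codimension bookkeeping pins down $c=3$, giving Type~I. Conversely, for varieties of either type one checks directly that a general curve section attains the bound $d-c+1$ --- for Type~II because $X \cap \mathbb{F}$ supplies a maximal secant line, and for Type~I by an explicit regularity computation on the scroll --- so both types are indeed of maximal sectional regularity.

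The main obstacle I anticipate is the globalization step: the curve-theoretic classification only controls a single general section, and one must show that the extremal secant lines fit together coherently across all sections to produce a \emph{single} linear space or a \emph{single} scroll, rather than varying erratically. Establishing that the union of extremal secant lines is closed, irreducible and of the predicted dimension --- and in the Type~II case genuinely \emph{linear} --- is the technical heart of the argument, and it is exactly where the near-uniqueness of the extremal secant line of each curve section together with a careful incidence analysis in the Grassmannian $\mathbb{G}(1,\mathbb{P}^r)$ are brought to bear.
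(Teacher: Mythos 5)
The first thing to note is that the paper contains no proof of this statement at all: Theorem~\ref{theorem 2.1} is recalled verbatim from the companion paper \cite[Theorem 7.1]{BLPS1}, so there is no internal argument to measure your proposal against. What can be compared is your strategy versus the machinery from \cite{BLPS1} that Section~2 of this paper recalls: the extremal secant line of a curve of maximal regularity (Proposition~\ref{proposition 2.3}, resting on \cite{GruLPe}), the extremal secant locus $\Sigma(X)$ and its dimension bound $\dim\Sigma(X)\leq 2n-2$ with equality precisely in the maximal sectional regularity case (Notation and Reminder~\ref{4.1'' Notation and Reminder}), the special extremal secant locus ${}^*\Sigma(X)$ and the extremal variety $\mathbb{F}(X)$ swept out by the lines $\mathbb{L}_{\Lambda}$ (Notation and Reminder~\ref{4.2'' Notation and Reminder}, Proposition~\ref{prop:dimspecextrloc}). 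Your outline --- reduce to general curve sections, invoke the Gruson--Lazarsfeld--Peskine classification, track the extremal secant lines as the section varies, and read the dichotomy off the geometry of the swept locus --- is indeed the architecture those recalled results suggest, so as a roadmap it points in the right direction.

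As a proof, however, it has a genuine gap, and you name it yourself: the entire classification content of the theorem sits in the step you defer. That the closure of the union of the special extremal secant lines is either a single linear $\mathbb{P}^n$ or the scroll $S(0,\ldots,0,1,1,1)$; that in the scroll case the codimension is forced to equal $3$ and $X$ is forced into the divisor class $H+(d-3)F$; that no third alternative (a nonlinear, non-scroll sweep, or a linear space of the wrong dimension) can occur --- none of this is argued, only announced (``I would show\ldots'', ``bookkeeping pins down $c=3$''). This incidence analysis in $\mathbb{G}(1,\mathbb{P}^r)$ is not a routine globalization of the curve-by-curve picture; it is the theorem. The converse directions are likewise only sketched: for Type~I you appeal to ``an explicit regularity computation on the scroll'' without performing it (for surfaces this is the computation $\reg(X)=d-2=d-c+1$ recorded in Convention and Remark~\ref{convention and remark}), and for Type~II you still must verify that a general $\mathbb{P}^{c+1}$ meets $\mathbb{F}$ in a line along which $X\cap\mathbb{F}$ cuts a finite scheme of length exactly $d-c+1$, so that the section curve acquires an extremal secant line and the Gruson--Lazarsfeld--Peskine bound is attained. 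In short: correct skeleton, but the load-bearing bones are missing, and they cannot be supplied by the material in this paper --- they constitute the content of \cite[Theorem 7.1]{BLPS1}.
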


\begin{remark}
\label{remark 2.2}
The previous classification result allows to conclude that
there exist varieties $X \subset \mathbb{P}^r$ of maximal sectional regularity of dimension $n$, of codimension $c$
and of degree $d$ for any given triplet $(n,c,d)$ with $n \geq 2$, $c \geq
3$ and $d \geq c+3$. \\
\end{remark}

For the purposes of the present paper, we notice in particular the following result (see also \cite[Theorem 6.3]{BLPS1}):

\begin{corollary}
\label{corollary 2.1'} Let $5 \leq r < d$. Then, a non-degenerate irreducible projective surface $X \subset \mathbb{P}^r$ of degree $d$ is of maximal sectional regularity if and only if it falls under one of the following two types:
\begin{itemize}
\item[{}] {\bf \textit{Type I:}} $r=5$ and $X$ is a divisor of the smooth $3$-fold scroll $W := S(1,1,1) \subset \mathbb{P}^5$ with $X \approx H + (d-3)F$, where $H$
is the hyperplane divisor of $W$ and $F \subset W$ is a ruling plane. In this case, the surface $X$ is smooth.
\item[{}] {\bf \textit{Type II:}} There exists a plane $\mathbb{F} = \mathbb{P}^2 \subset \mathbb{P}^5$ such that $X \cap \mathbb{F} \subset \mathbb{F}$ is a pure curve of degree $d-r+3$. In this case, the surface $X$ is singular.
\end{itemize}
\end{corollary}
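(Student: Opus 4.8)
The plan is to derive everything from Theorem~\ref{theorem 2.1} by specialising to $n=2$ and then to account separately for the two smoothness assertions. I put $n=2$, so that the codimension is $c = r-2$. The numerical hypotheses of Theorem~\ref{theorem 2.1}, namely $c \geq 3$ and $d \geq c+3$, translate precisely into $r \geq 5$ and $d \geq r+1$, i.e. into the standing assumption $5 \leq r < d$; and since $n=2$, the characteristic hypothesis of Theorem~\ref{theorem 2.1} is vacuously fulfilled. Hence $X$ is of maximal sectional regularity if and only if it falls under Type I or Type II of that theorem, and it remains to rewrite those two descriptions in the surface case and to establish the smooth/singular dichotomy.

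For Type I one has $c=3$, which is equivalent to $r=5$. The scroll $S(0,\dots,0,1,1,1)$ carries $n-2=0$ zeros, hence equals $S(1,1,1) \subset \mathbb{P}^{n+3} = \mathbb{P}^5$ and is smooth (its vertex, of dimension $n-3=-1$, is empty), while $F$ is a linear subspace of dimension $n=2$, i.e. a ruling plane. To see that $X \approx H + (d-3)F$ is smooth I would identify $W$ with the Segre image of $\mathbb{P}^1 \times \mathbb{P}^2$, under which $H = \mathcal{O}(1,1)$ and the ruling plane $F = \mathcal{O}(1,0)$, so that $X \in |\mathcal{O}(d-2,1)|$. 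Writing the defining bihomogeneous form as $f = A_0(s_0,s_1)y_0 + A_1(s_0,s_1)y_1 + A_2(s_0,s_1)y_2$ with $\deg_s A_i = d-2$, the irreducibility of $X$ forces $\gcd(A_0,A_1,A_2)=1$ (a common factor would split off and make $f$ reducible), whence the $A_i$ have no common zero on $\mathbb{P}^1$. A chartwise application of the Jacobian criterion then shows that any singular point of $X$ would have to lie over a common zero of $A_0,A_1,A_2$; as there is none, $X$ is smooth.

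For Type II the extremal subspace is $\mathbb{F} = \mathbb{P}^n = \mathbb{P}^2 \subset \mathbb{P}^r$ (so the ambient space in the statement is $\mathbb{P}^r$), and $X \cap \mathbb{F}$ is a hypersurface of degree $d-c+1 = d-r+3$ in the plane $\mathbb{F}$. A hypersurface in $\mathbb{P}^2$ is cut out by a single form and is automatically equidimensional of dimension $1$, i.e. a pure plane curve, of degree $d-r+3$; this gives the stated description.

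The remaining, and genuinely substantial, point is that every Type II surface is singular, and this is the step I expect to be the main obstacle. Here I would use the structural description recalled in the Introduction (see \cite{BLPS1}): such an $X$ is an almost non-singular projection of a rational surface scroll $\widetilde{X} \subset \mathbb{P}^{d+1}$, and $\widetilde{X}$ is singular precisely when $X$ is a cone. If $X$ is a cone, its vertex is a singular point, since $X$ has degree $d > r > 1$ and is therefore not a linear space. If instead $\widetilde{X}$ is smooth and $X$ were smooth as well, then the finite birational projection $\widetilde{X} \to X$ onto the normal variety $X$ would be an isomorphism by Zariski's Main Theorem, exhibiting $X$ as an isomorphically projected smooth surface scroll; excluding this case, equivalently showing that a smooth surface of maximal sectional regularity is forced into Type I, is exactly where the difficulty lies. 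I would complete this step by invoking \cite[Theorem 6.3]{BLPS1}, in which the smooth surfaces of maximal sectional regularity are identified with the Type I divisors on $S(1,1,1)$. Combined with the preceding paragraphs, this yields that Type II surfaces are singular, so that the smooth/singular dichotomy matches Types I and II as asserted.
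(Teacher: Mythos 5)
Your proposal is correct, but it is worth noting what the paper itself does here: it gives \emph{no} proof of Corollary~\ref{corollary 2.1'} at all, presenting it as the case $n=2$ of Theorem~\ref{theorem 2.1} together with the citation \cite[Theorem 6.3]{BLPS1}. Your translation step agrees with this (for $n=2$ one has $c=r-2$, so the hypotheses $c\geq 3$, $d\geq c+3$ become exactly $5\leq r<d$, and the characteristic hypothesis is vacuous), and you are right that the ``$\mathbb{P}^5$'' in the paper's Type II statement is a typo for $\mathbb{P}^r$. Your verification that Type I divisors are smooth --- realizing $W$ as the Segre image of $\mathbb{P}^1\times\mathbb{P}^2$, so that $X\in|\mathcal{O}(d-2,1)|$ is given by $f=\sum A_i(s_0,s_1)y_i$, noting that irreducibility forces the binary forms $A_i$ to have no common zero, and applying the Jacobian criterion fibrewise --- is correct and is genuine added content, since the paper delegates even this to the reference. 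For the one hard claim, the singularity of Type II surfaces, you fall back on the same citation \cite[Theorem 6.3]{BLPS1} that the paper uses, so there the two treatments coincide; your reduction via Theorem~\ref{theorem 2.5} and Zariski's Main Theorem (cones are singular at the vertex; a smooth non-cone would be an isomorphically projected smooth scroll) correctly isolates where the difficulty lies. One point you should make explicit: deducing ``Type II implies singular'' from ``smooth implies Type I'' requires that the two types be mutually exclusive. This is true and can be seen directly: if a plane $\mathbb{F}$ met a Type I surface $X$ in a pure curve of degree $d-2\geq 4$, the general line of $\mathbb{F}$ would be at least a $3$-secant of $X$, hence contained in the quadratically cut out scroll $W$, forcing $\mathbb{F}\subset W$; but the only planes in $S(1,1,1)$ are its ruling planes, and $X$ meets each ruling plane in a line, not in a curve of degree $d-2$. (Alternatively, exclusivity follows from \cite[Theorem 6.3]{BLPS1} via the extremal variety, which is the $3$-fold $W$ in Type I and a plane in Type II; see Notation and Reminder~\ref{4.2'' Notation and Reminder} (C).)
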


We now introduce the notion of sectional regularity and characterize surfaces of maximal sectional regularity by means of this invariant.

\begin{remark and definition}
\label{remark and definition 0} (A) Let $X \subset \mathbb{P}^r = \Proj(S := \Bbbk[x_0,x_1,\ldots,x_r])$ be a non-degenerate irreducible projective variety of dimension $n \geq 2$, of codimension $c \geq 2$ and of degree $d$. We introduce the notation
$$\mathbb{H}_h := \Proj(S/hS) \mbox{ for all } h \in S_1 = \sum_{i=0}^r \Bbbk x_i \mbox{ with } h \neq 0\},$$
and we define the \textit{sectional regularity} of $X$ by
$$\mathrm{sreg}(X) := \mathrm{min}\{\mathrm{reg}(X \cap \mathbb{H}_h) \mid h \in S_1 \setminus \{0\}\}.$$
As the regularity is semi-continuous on hyperplane sections, we can say that
$$\mathbb{W}(X) := \{h \in S_1 \setminus \{0\} \mid \mathrm{reg}(X \cap \mathbb{H}_h) = \mathrm{sreg}(X)\} \mbox{ is a dense open subset of } S_1 \setminus\{0\}.$$
Now, there is a dense open subset $\mathbb{U} \subseteq \mathbb{W}$ such that $X \cap \mathbb{H}_h$ is an integral scheme, and we denote the largest of these open sets by $\mathbb{U}(X).$\\
(B) Now, assume that $4 \leq r < d$ and that $X \subset \mathbb{P}^r = \Proj(S := \Bbbk[x_0,x_1,\ldots,x_r])$ is a non-degenerate irreducible projective surface of degree $d$. In this situation
$$\mathcal{C}_h := X \cap \mathbb{H}_h \mbox{ is an integral curve of degree $d$ with } \mathrm{reg}(\mathcal{C}_h) = \mathrm{sreg}(X) \mbox{ for all } h \in \mathbb{U}(X).$$
Hence, in particular we see that
$$\mathrm{sreg}(X) \leq d-r+3 \mbox{ with equality if and only if } X \mbox{ is of maximal sectional regularity}.$$
\end{remark and definition}

\subsection*{Curves of maximal regularity} As the generic linear curve sections of varieties of maximal sectional regularity are curves of maximal regularity, it will be useful for us to keep in mind the following fact.

\begin{proposition}
\label{proposition 2.3} Let $r\geq 4$ and let $\mathcal{C} \subset
\mathbb{P}^r$ be a curve of degree $d \geq r+2$ which is of maximal
sectional regularity, so that $\reg(\mathcal{C}) = d-r+2$. Then
$\mathcal{C}$ admits a unique $(d-r+2)$-secant line $\mathbb{L}$.
Moreover, if $d \geq 3r-3,$ then  $\mathrm{depth}(\mathcal{C} \cup
\mathbb{L}) = 1.$
\end{proposition}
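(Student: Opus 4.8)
The plan is to prove the two assertions separately, relying on the structure theory for curves of maximal regularity. First I would recall that a curve $\mathcal{C} \subset \mathbb{P}^r$ of degree $d$ with $\reg(\mathcal{C}) = d-r+2$ is, by the classical theory of curves of extremal/maximal regularity (Gruson--Lazarsfeld--Peskine and its refinements), essentially governed by a line having maximal contact with $\mathcal{C}$. The existence of \emph{some} $(d-r+2)$-secant line $\mathbb{L}$ is the geometric manifestation of maximal regularity, so the first task is existence, and the second is uniqueness.

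\smallskip

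For \textbf{existence}, I would use the standard dictionary between the regularity bound and secant lines: the inequality $\reg(\mathcal{C}) \leq d-r+2$ becomes an equality precisely when $\mathcal{C}$ has a line meeting it in the maximal number $d-r+2$ of points (counted appropriately on a general hyperplane section, or scheme-theoretically), a fact that can be extracted from the cohomological characterization of regularity together with the numerical position of the curve. For \textbf{uniqueness}, the key observation is that two distinct $(d-r+2)$-secant lines $\mathbb{L}, \mathbb{L}'$ would force too many points of $\mathcal{C}$ to lie in a small linear subspace: if $\mathbb{L} \neq \mathbb{L}'$ they span at most a plane (if they meet) or a $\mathbb{P}^3$ (if they are skew), and in either case the $2(d-r+2)$ or $2(d-r+2)-1$ points of $\mathcal{C}$ lying on $\mathbb{L} \cup \mathbb{L}'$ would violate the non-degeneracy/general-position constraints forced by $d \geq r+2$ on a curve spanning $\mathbb{P}^r$. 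Quantitatively, projecting $\mathcal{C}$ away from the span of the two lines, or counting the residual degree, yields a contradiction with the lower bound on how the remaining points must span the complementary space. This ``two secant lines are too many'' argument is the cleanest route to uniqueness.

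\smallskip

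For the \textbf{depth assertion}, I would analyze the union $\mathcal{C} \cup \mathbb{L}$ as a scheme and compute $\depth(\mathcal{C} \cup \mathbb{L}) := \depth_{\mathfrak{m}}\bigl(S/I(\mathcal{C} \cup \mathbb{L})\bigr)$, where $\mathfrak{m}$ is the irrelevant ideal. Since $\mathcal{C} \cup \mathbb{L}$ is a one-dimensional (projective) scheme, its homogeneous coordinate ring has dimension $2$, so the depth is either $1$ or $2$, the latter occurring exactly when $\mathcal{C} \cup \mathbb{L}$ is arithmetically Cohen--Macaulay. Thus the claim $\depth(\mathcal{C} \cup \mathbb{L}) = 1$ amounts to showing this union is \emph{not} arithmetically Cohen--Macaulay when $d \geq 3r-3$. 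The natural tool is the Mayer--Vietoris sequence relating the ideal sheaves of $\mathcal{C}$, of $\mathbb{L}$, and of $\mathcal{C} \cap \mathbb{L}$, from which one reads off $H^1_{\mathfrak{m}}(S/I(\mathcal{C}\cup\mathbb{L}))$ in terms of the connecting data; the failure of Cohen--Macaulayness is detected by the nonvanishing of this first local cohomology. The hypothesis $d \geq 3r-3$ should enter precisely here, guaranteeing that $\mathbb{L}$ meets $\mathcal{C}$ in enough points (namely $d-r+2 \geq 2r-1$) that the glueing introduces an honest $H^1_{\mathfrak{m}}$ contribution.

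\smallskip

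The main obstacle I anticipate is the depth computation rather than the secant-line statement: controlling $H^1_{\mathfrak{m}}(S/I(\mathcal{C}\cup\mathbb{L}))$ requires knowing the precise cohomology of $\mathcal{C}$ (which maximal regularity supplies) and tracking how the $d-r+2$ intersection points of $\mathbb{L}$ with $\mathcal{C}$ interact cohomologically with the line. The numerical threshold $d \geq 3r-3$ must be shown to be exactly what forces the relevant cohomology group to be nonzero, and verifying that the Mayer--Vietoris connecting map is not surjective in the critical degree is the delicate point. I would expect to handle this by a careful degree-by-degree comparison of Hilbert functions of $\mathcal{C}$, $\mathbb{L}$, $\mathcal{C}\cap\mathbb{L}$ and their union, isolating the degree in which the arithmetic Cohen--Macaulay property fails.
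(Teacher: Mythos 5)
Your treatment of the secant-line statements is essentially sound. Existence is exactly what the Gruson--Lazarsfeld--Peskine classification of curves of maximal regularity provides, which is also all the paper does (it cites \cite{GruLPe}); and your ``two extremal secant lines are too many'' count for uniqueness is a legitimate alternative to the paper's citation of \cite[(3.1)]{BS2}: projecting from the span $\Lambda$ of the two lines gives $\length(\mathcal{C}\cap\Lambda)\le d-r+\dim(\Lambda)+1$, and comparing this with the roughly $2(d-r+2)$ points forced onto $\Lambda$ yields a contradiction as soon as $d\ge r+2$, whether the lines are coplanar or skew.

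The depth assertion, however, contains a genuine gap. You correctly reduce it to showing that $\mathcal{C}\cup\mathbb{L}$ is not arithmetically Cohen--Macaulay when $d\ge 3r-3$, but you never identify the mechanism that makes this threshold work: saying the hypothesis ``should enter precisely here'' and promising a ``careful degree-by-degree comparison of Hilbert functions'' is a plan, not an argument, and the Mayer--Vietoris sequence alone does not produce the obstruction. The paper's actual mechanism is a count of quadrics resting on a structural fact you never invoke: projection from the extremal secant line $\mathbb{L}$ maps $\mathcal{C}$ onto a nondegenerate curve of degree $d-(d-r+2)=r-2$ in $\mathbb{P}^{r-2}$, necessarily a rational normal curve; hence $\mathcal{C}$ lies on the join $\mathbb{S}=\mathrm{Join}(\mathbb{L},\mathcal{C})$, a three-dimensional cone of minimal degree with vertex $\mathbb{L}$, whose ideal contains $\binom{r-2}{2}$ linearly independent quadrics, so that $\dim_{\Bbbk}(I_{\mathcal{C}})_2\ge\binom{r-2}{2}$. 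On the other hand, if $\mathcal{C}\cup\mathbb{L}$ were arithmetically Cohen--Macaulay, then by \cite[Proposition 3.3]{BS2} one has the exact count $\dim_{\Bbbk}(I_{\mathcal{C}})_2=\binom{r+1}{2}-d-1$. Since $\binom{r+1}{2}-\binom{r-2}{2}=3r-3$, combining the two inequalities forces $d\le 3r-4$, contradicting $d\ge 3r-3$. Without this lower bound coming from the join (or some equivalent use of the scroll structure attached to $\mathbb{L}$), no bookkeeping with the Hilbert functions of $\mathcal{C}$, $\mathbb{L}$ and $\mathcal{C}\cap\mathbb{L}$ explains why $3r-3$ is the right threshold --- and indeed your own closing paragraph flags exactly this as the point you cannot yet carry out, which is precisely where the proof lives.
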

\begin{proof}
The existence of the $(d-r+2)$-secant line $\mathbb{L}$ follows from the classification of curves of maximal regularity given in \cite{GruLPe}. The uniqueness of the extremal secant line $\mathbb{L}$ follows by \cite[(3.1)]{BS2}. \\
Assume now that $d \geq 3r-3$. Note that $\mathbb{S} := \mathrm{Join}(\mathbb{L},C) \subset \mathbb{P}^r$ is a rational normal $3$-fold scroll of type $S(0,0,r-1)$ whose vertex $S(0,0) \subset S(0,0,1)$ equals $\mathbb{L}.$ So, in degree $2$, the homogeneous vanishing ideals $I_{\mathcal{C}}$ and $I_{\mathbb{S}}$ of $\mathcal{C}$ respectively of  $\mathbb{S}$ in $S := \Bbbk[x_0.x_1,\ldots,x_r]$ satisfy the relation
$$\mathrm{dim}_{\Bbbk}(I_{\mathcal{C}})_2 \geq \mathrm{dim}_{\Bbbk}(I_{\mathbb{S}})_2 = \binom{r-2}{2}.$$
Assume now that $\mathrm{depth}(\mathcal{C} \cup \mathbb{L}) \neq 1$, so that
$\mathcal{C} \cup \mathbb{L}$ is arithmetically Cohen-Macaulay. Then, by \cite[Proposition 3.3]{BS2} it follows that
$$\mathrm{dim}_\Bbbk(I_C)_2 = \binom{r+1}{2}-d-1, \mbox{ whence } \binom{r-2}{2} \leq \binom{r+1}{2}-d-1,$$
and this yields the contradiction that $d \leq 3r-4.$
\end{proof}

\subsection*{Sectionally rational varieties} It is most important, that each variety $X$ of maximal sectional regularity is  \textit{sectionally rational}, which means that its general curve section is rational. If the general curve section of $X$ is even smooth and rational, we say that $X$ is \textit{sectionally smooth rational}. A particularly interesting property of sectionally rational varieties is the fact, that they are birational linear projections of varieties of minimal degree. To make this
statement more precise, we first give the following definition.

\begin{definition and remark}
\label{definition and remark 2.4} (A) We define the \textit{singular locus} of a finite morphism $f: X' \longrightarrow X$ of noetherian schemes by
$$\mathrm{Sing}(f) := \{x \in X \mid \mathrm{length}\big(f^{-1}(x)\big) \geq 2\}.$$
Observe, that we also may write
$$\mathrm{Sing}(f) = \mathrm{Supp}\big((f_*\mathcal{O}_{X'})/\mathcal{O}_X\big).$$

(B) We say that the finite morphism $f: X' \longrightarrow X$ is \textit{almost non-singular} if its singular locus $\mathrm{Sing}(f)$ is a finite set.
\end{definition and remark}

Now, we have the following result (see \cite[Theorem 4.1]{BLPS1}).

\begin{theorem}
\label{theorem 2.5} Let $X \subset \mathbb{P}^r$ be a non-degenerate irreducible sectionally rational projective variety of dimension $n\geq 2$ and degree $d$. Assume furthermore that $\rm{char}(\Bbbk)=0$ or $n=2$. Then, we may write
\begin{itemize}
\item[(a)] $X = \pi_{\Lambda}(\widetilde{X})$, where $\widetilde{X} \subset \mathbb{P}^{d+n-1}$
is an $n$-dimensional variety of minimal degree,
\item[(b)] $\Lambda = \mathbb{P}^{d+n-r-2} \subset \mathbb{P}^{d+n-1}$ is a linear subspace with $\widetilde{X} \cap \Lambda = \emptyset,$
\item[(c)] $\pi_{\Lambda}:\mathbb{P}^{d+n-1} \setminus \Lambda \longrightarrow \mathbb{P}^r$ is the linear projection map from $\Lambda$ and
\item[(d)] the induced finite morphism $\pi_{\Lambda}:\widetilde{X} \rightarrow X$ is the normalization of $X$.
\end{itemize}
Moreover, if $d\geq 5$, then $\widetilde{X}$ is a rational $n$-fold scroll. Finally, if $X$ is a sectionally smooth rational surface, the morphism $\pi_{\Lambda}:\widetilde{X} \rightarrow X$ is almost non-singular.
\end{theorem}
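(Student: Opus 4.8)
The plan is to realise $X$ as a linear projection by passing to its normalization. Write $\nu\colon X^\nu \to X$ for the normalization and set $\mathcal{L} := \nu^*\mathcal{O}_X(1)$; this is a line bundle of degree $d$ whose restriction to the normalization of a general curve section is $\mathcal{O}_{\mathbb{P}^1}(d)$. The embedding $X \subset \mathbb{P}^r$ corresponds to a base-point-free subspace $V \subseteq H^0(X^\nu,\mathcal{L})$ of dimension $r+1$. The whole assertion (a)--(d) will follow once I show that the \emph{complete} system $|\mathcal{L}|$ satisfies $h^0(X^\nu,\mathcal{L}) = d+n$ and defines a closed embedding $\Phi := \phi_{|\mathcal{L}|}\colon X^\nu \hookrightarrow \mathbb{P}^{d+n-1}$ whose image $\widetilde{X}$ is of minimal degree: the subsystem $V$ then exhibits $X$ as the image of $\widetilde X$ under the linear projection $\pi_\Lambda$ from the complementary center $\Lambda = \mathbb{P}^{d+n-r-2}$, and the base-point-freeness of $V$ forces $\widetilde{X}\cap\Lambda=\emptyset$.

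The cohomological heart is the equality $h^0(X^\nu,\mathcal{L}) = d+n$, which I would establish by induction on $n$. For $n=1$ the variety $X^\nu$ is $\mathbb{P}^1$, $\mathcal{L}=\mathcal{O}_{\mathbb{P}^1}(d)$, and $|\mathcal{L}|$ embeds $\mathbb{P}^1$ as the rational normal curve of degree $d$ in $\mathbb{P}^d$ -- the unique curve of minimal degree -- so $h^0=d+1$. For the inductive step, let $H \subset \mathbb{P}^r$ be a general hyperplane and put $Y := \nu^{-1}(X\cap H)$; identifying $\mathcal{O}_{X^\nu}(Y)\cong\mathcal{L}$, the restriction sequence reads
\[
0 \longrightarrow \mathcal{O}_{X^\nu} \longrightarrow \mathcal{L} \longrightarrow \mathcal{L}|_Y \longrightarrow 0 .
\]
By the inductive hypothesis $h^0(\mathcal{L}|_Y)=d+(n-1)$, so once $H^1(X^\nu,\mathcal{O}_{X^\nu})=0$ is known the long exact sequence forces surjectivity of $H^0(\mathcal{L})\to H^0(\mathcal{L}|_Y)$ and hence $h^0(\mathcal{L})=1+(d+n-1)=d+n$. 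The vanishing $H^1(\mathcal{O}_{X^\nu})=0$ together with the legitimacy of the induction on $Y$ -- namely that the general $Y$ is irreducible, normal, and the normalization of the sectionally rational $(n-1)$-fold $X\cap H$ -- is precisely what requires $\mathrm{char}(\Bbbk)=0$, where Bertini guarantees these properties of a general hyperplane section, or else $n=2$, where $Y$ is already the curve $\mathbb{P}^1$ of the base case and no Bertini is needed. This dichotomy is the origin of the hypothesis, and securing these properties in positive characteristic is the main obstacle of the argument.

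Granting the computation, $\Phi$ maps into $\mathbb{P}^{d+n-1}$ with non-degenerate image $\widetilde X$, and $\Phi$ is birational onto $\widetilde X$ because the embedding $X\subset\mathbb{P}^r$ factors as $\pi_\Lambda\circ\Phi$ and is already generically injective. Hence $\deg\widetilde X = \deg\mathcal{L}^{\,n} = \deg X = d$, while a non-degenerate $n$-fold in $\mathbb{P}^{d+n-1}$ has degree $\geq \codim+1 = d$; so $\widetilde X$ is of minimal degree, in particular normal, and the finite birational morphism $X^\nu\to\widetilde X$ between normal varieties is an isomorphism. This gives (a)--(c), and composing identifies $\pi_\Lambda\colon\widetilde X\to X$ with $\nu$, which is (d). For the ``moreover'', the del Pezzo--Bertini classification lists the varieties of minimal degree as quadrics (degree $2$), the cones over the Veronese surface (degree $4$), and rational normal scrolls; since $\deg\widetilde X=d\geq 5$, only scrolls survive, so $\widetilde X$ is a rational $n$-fold scroll.

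Finally, for the almost-non-singularity in the surface case, observe that $\Sing(\nu)=\Supp\big((\nu_*\mathcal{O}_{X^\nu})/\mathcal{O}_X\big)$ is the non-normal locus of $X$, a proper closed subset. Were it $1$-dimensional, it would contain a curve meeting a general hyperplane $H$ in a point $p$; then the general curve section $\mathcal{C}=X\cap H$ would pass through the non-normal point $p$ of $X$ and would itself fail to be normal there, contradicting the smoothness of $\mathcal{C}$ guaranteed by sectional smooth rationality. Therefore $\Sing(\nu)$ is finite and $\nu=\pi_\Lambda$ is almost non-singular.
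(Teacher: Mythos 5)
The paper offers no proof of Theorem~\ref{theorem 2.5} to compare against: it is quoted from \cite[Theorem 4.1]{BLPS1}, so your argument has to stand on its own. Its outer shell does stand. The reduction of (a)--(d) to the single equality $h^0(X^\nu,\mathcal{L})=d+n$, the remark that the inequality $h^0(X^\nu,\mathcal{L})\leq d+n$ is automatic (the image of $\Phi$ is birational to $X$, non-degenerate of dimension $n$ and degree $d$, and any such subvariety of $\mathbb{P}^{h^0-1}$ satisfies $d\geq\codim+1$), the passage from minimal degree to (a)--(d) via normality of minimal-degree varieties and Zariski's Main Theorem, the del Pezzo--Bertini classification for $d\geq 5$, and the finiteness of $\Sing(\nu)$ in the sectionally smooth rational case (if $X\cap H$ is regular at $p$ then so is $X$) are all correct.

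The cohomological heart, however, contains two genuine gaps. First, the vanishing $H^1(X^\nu,\mathcal{O}_{X^\nu})=0$ is asserted with Bertini named as its source; but Bertini-type theorems control general members of linear systems, not the cohomology of the ambient variety. This vanishing is essentially the statement that $X^\nu$ has no irregularity (for $n=2$: that a desingularization of $X^\nu$ is a rational surface), i.e.\ it is a substantive part of the theorem, and your induction does not close without it. In characteristic $0$ it can be rescued by strengthening the inductive claim: by induction $Y$, embedded by the complete system, is a variety of minimal degree, hence arithmetically Cohen--Macaulay, so $H^1(Y,\mathcal{L}^{-k}|_Y)=0$ for all $k\geq 0$; climbing the twisted restriction sequences and using the Enriques--Severi--Zariski vanishing $H^1(X^\nu,\mathcal{L}^{-k})=0$ for $k\gg 0$ then gives $H^1(\mathcal{O}_{X^\nu})=0$ --- but none of this is in your text. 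Second, and more seriously, the case $n=2$ in positive characteristic --- exactly the case this paper needs, since it works over an arbitrary algebraically closed field --- is waved off with ``$Y$ is already the curve $\mathbb{P}^1$ of the base case and no Bertini is needed''. This is backwards. A priori $Y=\nu^{-1}(\mathcal{C})$ is only an integral curve, finite and birational over the rational curve $\mathcal{C}$; one has $h^0(Y,\mathcal{L}|_Y)\leq d+1$, with equality if and only if $Y\cong\mathbb{P}^1$, i.e.\ if and only if $Y$ is normal. So your induction needs precisely the Bertini--Seidenberg statement that the general member of the base-point-free system $\nu^*|\mathcal{O}_X(1)|$ on the normal surface $X^\nu$ is smooth --- available in characteristic $0$, but false in general in characteristic $p$. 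Producing an argument for this (or a way around it) is the real content of the surface case in arbitrary characteristic, and it is missing from your proof.
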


\subsection*{A regularity bound for almost non-singular projections} We know by Theorem~\ref{theorem 2.5}, that sectionally smooth rational surfaces are almost non-singular linear projections of rational normal scrolls. This will allow us to prove that these surfaces satisfy the conjectural Eisenbud-Goto bound. In this subsection, we shall actually prove a much more general bounding result for the regularity of almost non-singular linear projections.

\begin{definition}
\label{definition 2.6}
(A) Let $p \in \mathbb{N}$. The graded ideal $I \subset S := \Bbbk[x_0,x_1,\ldots,x_r]$ is said to satisfy
the \textit{(Green-Lazarsfeld) property} $N_{2,p}$ (see \cite{GL}) if the Betti numbers of $S/I$ satisfy the condition
$$\beta_{i,j} := \beta^S_{i,j}(S/I) = \beta^S_{i-1,j+1}(I) = 0 \mbox{ whenever } i \leq p \mbox{ and } j \neq 1,$$
-- hence if the minimal free resolution of $I$ is linear up to the homological degree $p$ -- and thus has the form
$$\ldots \rightarrow S^{\beta_{p,1}}(-p-1)\rightarrow \ldots \rightarrow S^{\beta_{1,1}}(-2) \rightarrow I \rightarrow 0.$$
(B) The closed subscheme $Z \subset \mathbb{P}^r$ is said to satisfy the \textit{property} $N_{2,p}$ if its homogeneous vanishing ideal $I_Z \subset S$ satisfies the property $N_{2,p}$.
\end{definition}
Now, we may prove the announced regularity bound for almost non-singular projections of $N_{2,p}$-varieties.

\begin{theorem}\label{2.5' Theorem} Let $r' \geq r$ be integers, let $X' \subset \mathbb{P}^{r'}$ be a non-degenerate
projective variety of dimension $ \geq 2$ which satisfies the property $N_{2,p}$ for some $p \geq\max\{2,r'-r+1\}$. Let $\Lambda = \mathbb{P}^{r'-r-1}$
be a subspace such that $X' \cap \Lambda = \emptyset$ and let $\pi_\Lambda:\mathbb{P}^{r'} \setminus \Lambda: \twoheadrightarrow \mathbb{P}^r$ be the linear projection from $\Lambda$. Let $X := \pi_\Lambda(X') \subset \mathbb{P}^r$ and assume that the induced finite morphism
$\pi_{\Lambda}: X' \twoheadrightarrow X$ is almost non-singular. Then
\begin{itemize}
\item[\rm{(a)}] The homogeneous vanishing ideal $I_X \subset S$ of $X$ is generated by homogeneous polynomials of degrees $\leq r'-r+2$.
\item[\rm{(b)}] $\reg(X) \leq \max\{\reg(X'), r'-r+2\}$.
\end{itemize}
\end{theorem}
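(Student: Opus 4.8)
The plan is to translate the geometric projection statement into a statement about a finite birational graded ring extension and to reduce everything to the control of a single ``deficiency module''. First I would fix coordinates so that $\Lambda = V(x_0,\dots,x_r)$, so that $S = \Bbbk[x_0,\dots,x_r] \subset S' = \Bbbk[x_0,\dots,x_{r'}]$ and $\pi_\Lambda$ is the elimination of $x_{r+1},\dots,x_{r'}$. Since $X' \cap \Lambda = \emptyset$ and $\pi_\Lambda$ is finite, these extra variables are integral over $A := S/I_X$ modulo $I_{X'}$; hence $A' := S'/I_{X'}$ is a finite $A$-module, $I_X = I_{X'}\cap S$, and $A = S/I_X$ is a graded subalgebra of $A'$. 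Because $\pi_\Lambda$ is almost non-singular and $\dim X \geq 2$, the general fibre is a single reduced point, so $A \hookrightarrow A'$ is birational. Setting $D := A'/A$, Definition and Remark~\ref{definition and remark 2.4} identifies $\Supp(D)$ with $\Sing(\pi_\Lambda)$, a finite set; thus $\dim D \leq 1$ and $H^i_{\mathfrak m}(D) = 0$ for $i \geq 2$. I would also record that $\sqrt{\mathfrak m A'}$ equals the irrelevant ideal of $A'$ (the extra variables being integral), so that local cohomology is insensitive to the passage from $S'$ to $S$ and $\reg_S(A') = \reg(X')$.

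With this in place, part (b) is nearly formal. The plan is to apply the standard regularity estimate to the short exact sequence $0 \to A \to A' \to D \to 0$ of graded $S$-modules, which gives $\reg(X) = \reg(A) \leq \max\{\reg(A'),\ \reg(D)+1\} = \max\{\reg(X'),\ \reg(D)+1\}$. Thus (b) is reduced to the single cohomological bound $\reg(D) \leq r'-r+1$, and it is here that the hypotheses $N_{2,p}$ with $p \geq \max\{2,\,r'-r+1\}$ and almost non-singularity must both be spent.

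The heart of the argument is therefore to prove $\reg(D) \leq r'-r+1$. My plan is to read the deficiency module $D$ off the $(r'-r)$-fold elimination via Green's partial elimination ideals of $I_{X'}$: the quadratic generators supplied by $N_{2,0}$ together with the linear syzygies guaranteed by $N_{2,p}$ up to homological step $p = r'-r+1$ allow one to ``transport'' the defining quadrics through each of the $r'-r$ eliminations while the degree increases by at most one per step, so that the graded pieces of $D$ in which $A \to A'$ fails to be an isomorphism are confined to degrees $\leq r'-r+1$. The main obstacle I anticipate is that the almost non-singular hypothesis is assumed only for the \emph{total} projection: factoring $\pi_\Lambda$ into point projections can produce intermediate images with positive-dimensional singular loci, so the partial-elimination bookkeeping cannot simply be iterated step by step. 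I expect to circumvent this either by a genericity argument on the factorization, or—more robustly—by working directly with the finite $A$-module structure of $A'$ and estimating $H^0_{\mathfrak m}(D)$ and $H^1_{\mathfrak m}(D)$ from the linear strand of the resolution of $A'$, never passing through intermediate projections.

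Finally, part (a) requires a genuinely sharper statement than (b): the regularity bound only forces the generators of $I_X$ into degrees $\leq \reg(X)+1 = r'-r+3$, one more than claimed. To gain the last degree I would pass to the Tor long exact sequence attached to $0 \to A \to A' \to D \to 0$, where the degree-$j$ generators of $I_X$ sit in $\Tor_1^S(A,\Bbbk)_j$ and are squeezed between $\Tor_2^S(D,\Bbbk)_j$ and $\Tor_1^S(A',\Bbbk)_j$. The vanishing $\Tor_1^S(A',\Bbbk)_j = 0$ for $j > \reg(X')+1$ and $\Tor_2^S(D,\Bbbk)_j = 0$ for $j > \reg(D)+2$ kills everything in degrees $\geq r'-r+4$, leaving exactly the boundary degree $j = r'-r+3$ to be handled by hand. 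The subtle point—and the real obstacle for (a)—is to show that this critical graded component $\Tor_2^S(D,\Bbbk)_{r'-r+3}$ contributes no new generator, equivalently that the multiplication $H^0(\mathbb{P}^r,\mathcal I_X(r'-r+2)) \otimes H^0(\mathbb{P}^r,\mathcal O(1)) \to H^0(\mathbb{P}^r,\mathcal I_X(r'-r+3))$ is surjective; I would establish this by exhibiting the degree-$(r'-r+3)$ elements of $I_X$ explicitly from the transported quadrics of the last elimination step, again invoking the linear syzygies of $N_{2,p}$ precisely at step $p = r'-r+1$.
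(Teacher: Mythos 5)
Your proposal reproduces the paper's skeleton exactly: fix coordinates so that projection is elimination, form the short exact sequence $0 \to A \to A' \to D \to 0$ with $D := A'/A$, use almost non-singularity to get $\dim D \leq 1$, and reduce everything to a regularity bound on $D$. But the decisive step --- actually proving that bound --- is never carried out; at precisely that point the proposal retreats to ``I expect to circumvent this.'' The paper fills this hole with two specific external theorems. First, by \cite[Theorem 3.6]{AK} (applied to the \emph{total} projection in one shot, so your worry about intermediate projections never arises), the hypothesis $N_{2,p}$ with $p \geq \max\{2, t+1\}$, $t := r'-r$, forces the minimal free presentation of $A'$ as a graded $S$-module to be $S^s(-2) \stackrel{v}{\to} S \oplus S^t(-1) \to A' \to 0$; hence $D(1)$ is generated by $t$ elements of degree $0$ with relations in degree $1$. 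Second, by \cite[Corollary 2.4]{ChFN}, a graded module of dimension $\leq 1$ with that generation and relation data satisfies $\reg\big(D(1)\big) \leq t-1$, i.e.\ $\reg(D) \leq t$. Neither ingredient, nor any substitute for them, appears in your plan. Note that the dimension hypothesis must enter the regularity estimate itself: generation in degree $0$ together with purely linear relations does \emph{not} bound regularity in terms of the number of generators once $\dim \geq 2$ (if it did, property $N_{2,2}$ alone would bound $\reg(X')$, and the theorem would not need $p$ to grow with $t$). So the fallback of ``estimating $H^0_{\mathfrak m}(D)$ and $H^1_{\mathfrak m}(D)$ from the linear strand of the resolution of $A'$'' has no mechanism by which $\dim D \leq 1$ gets used quantitatively --- this is exactly the role of the Chardin--Fall--Nagel result. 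Finally, your target $\reg(D) \leq t+1$ is one worse than what the paper obtains, and that lost $1$ is what manufactures your ``boundary degree'' trouble in (a).

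Part (a) has a second, independent flaw. The claim is that $I_X$ is generated in degrees $\leq t+2$ with \emph{no} dependence on $\reg(X')$. In your Tor sequence
$$\Tor^S_2(D,\Bbbk)_j \to \Tor^S_1(A,\Bbbk)_j \to \Tor^S_1(A',\Bbbk)_j,$$
a degree $j$ is killed only when both outer terms vanish, and the only vanishing you invoke for the right-hand term is $j > \reg(X')+1$. Thus all degrees $j$ with $t+4 \leq j \leq \reg(X')+1$ remain uncontrolled whenever $\reg(X')$ is large, and your argument can at best yield generation in degrees $\leq \max\{\reg(X')+1,\, t+3\}$ --- which is not statement (a), and your hand-treatment of the single degree $t+3$ (itself only sketched, via another unproven ``transport'' claim) cannot repair this. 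The cure is the same missing ingredient as above: by the minimality of the Ahn--Kwak presentation, $\Tor^S_1(A',\Bbbk)$ is concentrated in degree $2$, and combined with the sharp bound $\reg(D) \leq t$ (so $\Tor^S_2(D,\Bbbk)_j = 0$ for $j > t+2$) your own exact sequence gives (a) immediately, with no boundary case. This is, in kernel form, exactly what the paper does: $I_X \cong \Ker(u)/\Ker(v)$, and the four-term sequence $0 \to \Ker(u) \to S^s(-2) \stackrel{u}{\to} S^t(-1) \to D \to 0$ yields $\reg(\Ker(u)) \leq t+2$, independently of the higher syzygies of $A'$. In short: the architecture is right, but both places where the hypotheses $N_{2,p}$ and almost non-singularity actually get spent are left as declarations of intent rather than proofs.
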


\begin{proof} Let $I_{X'} \subset S' := \Bbbk[x_0,x_1,\ldots,x_r,x_{r+1},\ldots,x_{r'}] = S[x_{r+1},\ldots,x_{r'}]$ be the homogeneous vanishing ideal of $X' \subset \mathbb{P}^{r'} = \Proj(S')$ and let $A':= S'/I_{X'}$ be the homogeneous coordinate ring of $X'$. We assume that $\Lambda = \Proj(S'/(x_0,x_1,\ldots,x_r)S')$, consider $A'$ as a finitely generated graded $S$-module and set $t:= r'-r$. As $X'$ satisfies the condition $N_{2,p}$ with $p \geq\max\{2,t+1\}$,
it follows by \cite[Theorem 3.6]{AK}, that the minimal free presentation of $A'$ has the shape
$$S^s(-2) \stackrel{v}{\rightarrow} S \oplus S^t(-1) \stackrel{q}{\rightarrow} A' \rightarrow 0$$
for some $s \in \mathbb{N}$. Moreover, the coordinate ring $A = S/I_X$ of $X$ is nothing else than the image $q(S)$ under $q$ of the direct summand
$S \subset S\oplus S^t(-1)$. Therefore
$$A'/A \cong \Coker\big(u:S^s(-2) \rightarrow S^t(-1)\big),$$
where $u$ is the composition of the map $v:S^s(-2) \rightarrow S\oplus S^t(-1)$ with the canonical projection map $w: S\oplus S^t(-1) \twoheadrightarrow S^t(-1)$.
Hence, the $S$-module $(A'/A)(1)$ is generated by $t$ homogeneous elements of degree $0$ and related in degree $1$. As $\Sing(\pi_{\Lambda})$ is finite,
we have $\dim(A'/A) \leq 1$. So, it follows by \cite[Corollary 2.4]{ChFN} that $\reg\big((A'/A)(1)\big) \leq t-1$, whence $\reg(A'/A) \leq t$. Now, the short
exact sequence $0\rightarrow A \rightarrow A' \rightarrow A'/A \rightarrow 0$ implies that $\reg(A) \leq \max\{\reg(A'), t+1\}$.  It follows that
$\reg(X) = \reg(A) + 1 \leq \max\{\reg(A') + 1, t+2\} = \max\{\reg(X'), t+2\} = \max\{\reg(X'), r'-r+2\}$. This proves claim (b).\\
To prove claim (a), observe that $I_X = \Ker(q) \cap S$ occurs in the short exact sequence of graded $S$-modules
$0 \rightarrow I_X \rightarrow \mathrm{Im}(v) \stackrel{w\upharpoonright}{\rightarrow} \mathrm{Im}(u) \rightarrow 0$, where ${w\upharpoonright}$ is the
restriction of the above projection map $w$. In particular, we may identify ${w\upharpoonright}$ with the canonical map $S^s(-2)/\Ker(v) \twoheadrightarrow S^s(-2)/\Ker(u)$.
It follows, that $I_X \cong \Ker(u)/\Ker(v)$. In view of the exact sequence
$0\rightarrow \Ker(u) \rightarrow S^s(-2) \stackrel{u}{\rightarrow}S^t(-1) \rightarrow A'/A \rightarrow 0$, we finally get $\reg(\Ker(u)) \leq t+2 = r'-r+2$.
Therefore $\Ker(u)$ is generated in degrees $\leq r'-r+2$, and hence so is $I_X$. This proves statement (a).
\end{proof}

\subsection*{Extremal secant loci and extremal varieties} In this subsection, we recall a few facts on the geometry of proper $(d-c+1)$-secant lines to a non-degenerate irreducible projective variety $X \subset \mathbb{P}^r$ of codimension $c$ and degree $d$. We also recall the related notion of extremal secant locus $\Sigma (X)$ of $X$, that is, the closure of the set of all proper $(d-c+1)$-secant lines of $X$ in the Grassmannian $\mathbb{G}(1,\mathbb{P}^r)$.

\begin{notation and reminder}\label{4.1'' Notation and Reminder} (See \cite[Notation and Reminder 3.1]{BLPS1}) (A) Let $X \subset \mathbb{P}^r$ be as above and let
$$\Sigma(X) := \overline{\{\mathbb{L} \in \mathbb{G}(1,\mathbb{P}^r) \mid d-c+1 \leq \mathrm{length}(X \cap \mathbb{L}) < \infty\}}$$
denotes the \textit{extremal secant locus} of $X.$
Keep in mind, that setting $n := \dim(X) = r-c$ we can say (see \cite[Theorem 3.4]{BLPS1})
$$\dim\big(\Sigma(X)\big) \leq 2n-2 \mbox{ with equality if and only if } X \mbox{ is of maximal sectional regularity}.$$
(B) Keep the above notations and hypotheses and let $\mathcal{U}(X)$ denote the largest open subset $\mathcal{U} \subset \mathbb{G}(c+1, \mathbb{P}^r )$ such that
$$\mathcal{C}_{\Lambda} := X \cap \Lambda \subset \Lambda = \mathbb{P}^{c+1} \mbox{ is an integral curve of maximal regularity for all } \Lambda \in \mathcal{U}.$$
Observe that
$$X \mbox{ is of maximal sectional regularity if and only if } \mathcal{U}(X) \neq \emptyset.$$
\end{notation and reminder}

We introduce a subset of the extremal locus of a variety of maximal sectional regularity, which reflects in a particular way the nature of these varieties. We use this set to define the extremal variety of a variety of maximal sectional regularity.

\begin{notation and reminder}\label{4.2'' Notation and Reminder} (See \cite[Section 5]{BLPS1})
(A) Let the notations and hypotheses be as in Notation and Reminder~\ref{4.1'' Notation and Reminder}, and assume that $X$ is of dimension $n \geq 2$ and of maximal sectional regularity, so that $\mathcal{U}(X) \neq \emptyset$. For all $\Lambda \in \mathcal{U}(X)$ let $\mathbb{L}_{\Lambda} \in \Sigma(X)$ denote the unique $(d-c+1)$-secant line to the curve $\mathcal{C}_{\Lambda} \subset \Lambda = \mathbb{P}^{c+1}$ (see Notation and Reminder~\ref{4.1'' Notation and Reminder} (B) and Proposition~\ref{proposition 2.3}), so that
$$\mathbb{L}_{\Lambda} \in \mathbb{G}(1, \mathbb{P}^r) \mbox{ with } \quad \mathrm{length}(\mathcal{C}_{\Lambda} \cap \mathbb{L}_{\Lambda}) = \mathrm{length}(X \cap \mathbb{L}_{\Lambda}) = d-c+1.$$
The $(d-c+1)$-secant lines of the form $\mathbb{L}_{\Lambda}$ with $\Lambda \in \mathcal{U}(X)$ are called \textit{special extremal secant lines}, whereas the set
$${}^*\Sigma(X) := \overline{\{\mathbb{L}_{\Lambda} \mid \Lambda \in \mathcal{U}(X)\}} \subseteq \Sigma(X) \subset \mathbb{G}(1,\mathbb{P}^r)$$
is called the \textit{special extremal secant locus} of $X$. If $n=1$, then $4 \leq r < d$ and $X \subset \mathbb{P}^r$ is a curve of maximal sectional regularity, and hence admits a unique extremal secant line $\mathbb{L} \in \mathbb{G}(1,\mathbb{P}^r)$ (see Proposition~\ref{proposition 2.3}). So, we define ${}^*\Sigma(X) := \{\mathbb{L}\}$ in this case. \\
(B) We define the \textit{extremal variety} and the \textit{extended extremal variety} of $X$ respectively by
$$\mathbb{F}(X) := \overline{\bigcup_{\Lambda \in \mathcal{U}(X)} \mathbb{L}_{\Lambda}} \quad (\quad \subseteq \quad) \quad \mathbb{F}^{+}(X) := \overline{\bigcup_{\mathbb{L} \in \Sigma(X)} \mathbb{L}}.$$
(C) Keep the previous notations and hypotheses, and assume that $5 \leq r < d$. Let $X \subset \mathbb{P}^r = \Proj\big(S := \Bbbk[x_0,x_1,\ldots,x_r]\big)$ be a surface of maximal sectional regularity, so that $c+1 = r-1$ and $\mathrm{sreg}(X) = d-r+3$. Then, in the notations of Remark and definition~\ref{remark and definition 0} we have
$$\mathbb{U}(X) = \{h \in S_1 \setminus\{0\} \mid \Proj(S/hS) \in \mathcal{U}(X)\}.$$
Moreover, for each $h \in \mathbb{U}(X)$, the line $\mathbb{L}_h := \mathbb{L}_{\mathbb{H}_h}$ is the unique $(d-r+3)$-secant line to the curve of maximal regularity $\mathcal{C}_h \subset \mathbb{H}_h$, and hence the line defined by the condition
$$\mathbb{L}_h \in \mathbb{G}(1, \mathbb{P}^r)\quad \mbox{ with } \quad \mathrm{length}(\mathcal{C}_h \cap \mathbb{L}_h) = \mathrm{length}(X \cap \mathbb{L}_h) = d-r+3.$$
Now, according to \cite[Theorem 6.3]{BLPS1} we can say:
\begin{itemize}
\item[\rm{(a)}] If $X$ is of type I, then the extremal variety $\mathbb{F}(X)$ and the extended extremal variety $\mathbb{F}^{+}(X)$ of $X$ both coincide with the smooth $3$-fold scroll $W = S(1,1,1) \subset \mathbb{P}^r$ of Corollary~\ref{corollary 2.1'}.
\item[\rm{(b)}] If $X$ is of type II, then the extremal variety $\mathbb{F}(X)$ of $X$ coincides with the plane $\mathbb{F} = \mathbb{P}^2 \subset \mathbb{P}^r$ of Corollary~\ref{corollary 2.1'}.
\end{itemize}
\end{notation and reminder}

The following result says that the extremal secant locus and the special extremal secant locus of a variety of maximal sectional regularity have the same dimension.

\begin{proposition}
\label{prop:dimspecextrloc} Let $c \geq 3$, let $d \geq c+3$, let $n \geq 1$  and $X \subset \mathbb{P}^r$ be a non-degenerate irreducible variety of dimension $n$ and degree $d$ which is of maximal sectional regularity. Then $\dim\big({}^*\Sigma(X)\big) = 2n-2$.
\end{proposition}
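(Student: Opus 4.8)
The plan is to establish the two inequalities $\dim\big({}^*\Sigma(X)\big) \leq 2n-2$ and $\dim\big({}^*\Sigma(X)\big) \geq 2n-2$ separately. The upper bound is immediate: by Notation and Reminder~\ref{4.2'' Notation and Reminder}(A) we have the inclusion ${}^*\Sigma(X) \subseteq \Sigma(X)$, and by Notation and Reminder~\ref{4.1'' Notation and Reminder}(A) the extremal secant locus satisfies $\dim\big(\Sigma(X)\big) \leq 2n-2$ (with equality precisely because $X$ is of maximal sectional regularity). Hence $\dim\big({}^*\Sigma(X)\big) \leq 2n-2$ with no further work. The case $n=1$ is handled separately by definition, where ${}^*\Sigma(X) = \{\mathbb{L}\}$ is a single point and $2n-2 = 0$, so the equality holds trivially; thus I may assume $n \geq 2$ in what follows.

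For the lower bound, the strategy is to exhibit a parametrization of a dense subset of ${}^*\Sigma(X)$ by a space of dimension $2n-2$ and to show the parametrizing map has finite (generically) fibers, so that the image has dimension at least $2n-2$. The natural parameter space is the open set $\mathcal{U}(X) \subseteq \mathbb{G}(c+1,\mathbb{P}^r)$ of linear subspaces $\Lambda = \mathbb{P}^{c+1}$ cutting out an integral curve of maximal regularity, together with the assignment $\Lambda \mapsto \mathbb{L}_{\Lambda}$ sending such a $\Lambda$ to the unique $(d-c+1)$-secant line of $\mathcal{C}_\Lambda$; by definition ${}^*\Sigma(X)$ is the closure of the image of this map. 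First I would record that $\dim\big(\mathbb{G}(c+1,\mathbb{P}^r)\big) = (c+2)(r-c-1)$ and that $\mathcal{U}(X)$ is a nonempty open subset, hence of this same dimension. The central task is then to compute the generic fiber dimension of the assignment $\Lambda \mapsto \mathbb{L}_\Lambda$, i.e. to understand how many subspaces $\Lambda$ give rise to the same special extremal secant line $\mathbb{L}$; subtracting this fiber dimension from $\dim\big(\mathbb{G}(c+1,\mathbb{P}^r)\big)$ should yield exactly $2n-2 = 2(r-c)-2$.

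The geometric content of the fiber computation is as follows. A subspace $\Lambda \supseteq \mathbb{L}$ produces the given line $\mathbb{L}$ as its extremal secant precisely when $\Lambda$ contains $\mathbb{L}$ and meets $X$ appropriately; the fibers are therefore governed by the choices of $(c+1)$-planes $\Lambda$ through a fixed line $\mathbb{L}$. The space of $\mathbb{P}^{c+1}$'s containing a fixed $\mathbb{P}^1 = \mathbb{L}$ is a Grassmannian $\mathbb{G}(c-1,\mathbb{P}^{r-2})$ of dimension $c(r-c-1)$, but not every such $\Lambda$ recovers $\mathbb{L}$ as the extremal secant of an integral curve section $\mathcal{C}_\Lambda$ of maximal regularity; the conditions that $\Lambda \cap X$ be of the right dimension and that $\mathbb{L}$ be the $(d-c+1)$-secant line force $\Lambda$ to meet $X$ in a curve section whose extremal line is $\mathbb{L}$, cutting the fiber down. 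The expected fiber dimension is $(c+2)(r-c-1) - (2r-2c-2) = c(r-c-1) - (r-c-1) = (c-1)(r-c-1)$, and verifying this is the main obstacle: one must show that, for generic $\mathbb{L} \in {}^*\Sigma(X)$, the locus of $\Lambda \in \mathcal{U}(X)$ with $\mathbb{L}_\Lambda = \mathbb{L}$ has exactly this dimension. I would carry this out by a dimension-count on the incidence correspondence $\{(\Lambda,\mathbb{L}) : \mathbb{L}_\Lambda = \mathbb{L}\} \subset \mathcal{U}(X) \times {}^*\Sigma(X)$, projecting to both factors: the first projection is dominant with fiber dimension computed above (and the source equidimensional since $\mathcal{U}(X)$ is open in the Grassmannian), while the second projection onto ${}^*\Sigma(X)$ has generically finite fibers once one knows that a general special extremal line $\mathbb{L}$ lies on a sub-locus of $\Lambda$'s of the predicted dimension. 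The delicate point is ensuring the genericity and finiteness statements hold; this is where the structural classification of Corollary~\ref{corollary 2.1'} (distinguishing type I and type II) and the uniqueness of the extremal secant line from Proposition~\ref{proposition 2.3} do the real work, pinning down the incidence geometry precisely enough that the dimension count is exact rather than merely an inequality.
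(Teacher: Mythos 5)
Your upper bound coincides with the paper's, and your plan for the lower bound --- parametrizing the special extremal secant lines by $\mathcal{U}(X)$ and counting fiber dimensions --- is a genuinely different route from the paper's, which instead argues by induction on $n$: for a general hyperplane $\mathbb{H}$, the section $X \cap \mathbb{H}$ is again of maximal sectional regularity, one has ${}^*\Sigma(X \cap \mathbb{H}) \subseteq {}^*\Sigma(X) \cap \mathbb{G}(1,\mathbb{H})$, and \cite[Lemma 3.2]{BLPS1} yields $\dim\big({}^*\Sigma(X)\big) \geq \dim\big({}^*\Sigma(X) \cap \mathbb{G}(1,\mathbb{H})\big) + 2$. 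Your route can be made to work, but as written it contains two concrete errors that would derail it.

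First, the arithmetic: the fiber dimension consistent with your own plan is $\dim\mathbb{G}(c+1,\mathbb{P}^r) - (2n-2) = (c+2)(r-c-1) - 2(r-c-1) = c(r-c-1)$, not $(c-1)(r-c-1)$. With your value, the count would give $\dim\big({}^*\Sigma(X)\big) = 3(r-c-1) = 3n-3$, contradicting the upper bound you just proved; and the ``main obstacle'' you set yourself --- verifying that generic fibers have dimension $(c-1)(r-c-1)$ --- is the verification of a false statement, since the true fiber dimension is $c(r-c-1)$. Note that $c(r-c-1)$ is exactly the dimension of the Grassmannian of all $(c+1)$-planes through a fixed line: contrary to your intuition, the incidence conditions do not ``cut the fiber down'' at all. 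By the uniqueness in Proposition~\ref{proposition 2.3}, the fiber over a special extremal secant line $\mathbb{L}$ is precisely $\{\Lambda \in \mathcal{U}(X) : \Lambda \supset \mathbb{L}\}$: any $\Lambda \in \mathcal{U}(X)$ containing $\mathbb{L}$ satisfies $\mathcal{C}_\Lambda \cap \mathbb{L} = X \cap \mathbb{L}$, of length $d-c+1$, so $\mathbb{L}$ is \emph{the} unique extremal secant line $\mathbb{L}_\Lambda$ of $\mathcal{C}_\Lambda$. Second, and relatedly, your description of the incidence correspondence has the projections backwards: the projection to $\mathcal{U}(X)$ is bijective (again by uniqueness of $\mathbb{L}_\Lambda$), while the projection to ${}^*\Sigma(X)$ is the one with positive-dimensional fibers --- it is not generically finite. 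Once both points are corrected, the proof closes more easily than you anticipate, and without any appeal to Corollary~\ref{corollary 2.1'}: for the lower bound one only needs that every fiber of $\Lambda \mapsto \mathbb{L}_\Lambda$ has dimension at most $c(r-c-1)$, which is immediate from the containment of the fiber in the sub-Grassmannian of $(c+1)$-planes through $\mathbb{L}$; then $\dim\big({}^*\Sigma(X)\big) \geq (c+2)(r-c-1) - c(r-c-1) = 2(r-c-1) = 2n-2$, with no genericity or exactness statement required.
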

\begin{proof} As ${}^*\Sigma(X) \subseteq \Sigma(X)$ it follows by the last observation made in Notation and Reminder~\ref{4.1'' Notation and Reminder} (A) that
$\dim\big({}^*\Sigma(X)\big) \leq \dim\big(\Sigma(X)\big) \leq 2n-2.$ It remains to show, that $\dim\big({}^*\Sigma(X)\big) \geq 2n-2.$ \\
We proceed by induction on $n.$ If $n = 1$, then our claim is clear by the definition of ${}^*\Sigma(X)$ (see Notation and Reminder~\ref{4.2'' Notation and Reminder} (A)). So, let $n > 1$ and let $\mathbb{H} = \mathbb{P}^{r-1} \subset \mathbb{P}^r$ be a general hyperplane. Then $X \cap \mathbb{H} \subset \mathbb{H}$ is a variety of dimension $n - 1$, codimension $c$ and degree $d$, which is of maximal sectional regularity. Moreover, each special extremal secant line $\mathbb{L} \in {}^*\Sigma(X \cap \mathbb{H})$ to $X \cap \mathbb{H}$ is a special extremal secant line to $X$. Therefore, we can say that
${}^*\Sigma(X \cap \mathbb{H}) \subseteq {}^*\Sigma(X) \cap \mathbb{G}(1,\mathbb{H}).$\\
By induction, we have $\dim\big({}^*\Sigma(X \cap \mathbb{H})\big) \geq 2(n-1)-2 = 2n-4.$ If we apply \cite[Lemma 3.2]{BLPS1} with $T = {}^*\Sigma(X)$ we obtain
$\dim\big({}^*\Sigma(X)\big) \geq \dim\big({}^*\Sigma(X) \cap \mathbb{G}(1,\mathbb{H})) + 2.$ This proves our claim.
\end{proof}

\section{Sectionally Smooth Rational Surfaces}

\subsection*{The projecting scroll} In this section we investigate sectionally smooth rational surfaces. We do this, because surfaces of maximal sectional regularity are sectionally smooth rational. Let us recall first, that according to Theorem~\ref{theorem 2.5} we can say.

\begin{corollary}
\label{corollary 2.7}
 Let $X \subset \mathbb{P}^r$ be a non-degenerate irreducible projective and sectionally smooth rational surface of degree $d$ with $r \geq 4$ and $d \geq r+1$.\\ Then, there exists a unique non-negative integer $a \leq \frac{d}{2}$ such that $X = \pi_{\Lambda} (\widetilde{X})$, where $\widetilde{X} = S(a,d-a) \subset \mathbb{P}^{d+1}$ and
\begin{itemize}
\item[\rm{(a)}] $\Lambda = \mathbb{P}^{d-r} \subset \mathbb{P}^{d+1}$ is a subspace such that $\widetilde{X} \cap \Lambda = \emptyset$,
\item[\rm{(b)}] $\pi_{\Lambda} : \mathbb{P}^{d+1} \setminus \Lambda \rightarrow \mathbb{P}^r$ is the linear projection map from $\Lambda$,
\item[\rm{(c)}] the induced finite morphism $\pi_{\Lambda}: \widetilde{X} \rightarrow X$ is almost non-singular and coincides with the normalization of $X$, and
\item[\rm{(d)}] $\widetilde{X}$ is smooth (or, equivalently, $a > 0$) if and only if $X$ is not a cone.
\end{itemize}
\end{corollary}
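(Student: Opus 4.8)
The plan is to derive everything from Theorem~\ref{theorem 2.5} applied with $n = 2$, together with elementary facts about rational normal scrolls. Since $n = 2$ the characteristic hypothesis of that theorem holds automatically, and since $r \geq 4$ forces $d \geq r + 1 \geq 5$ its further assertions apply as well. First I would invoke the theorem to write $X = \pi_\Lambda(\widetilde{X})$, where $\widetilde{X} \subset \mathbb{P}^{d+1}$ is a rational surface scroll of minimal degree $d$, where $\Lambda = \mathbb{P}^{d-r}$ is disjoint from $\widetilde{X}$, and where the induced finite morphism $\pi_\Lambda \colon \widetilde{X} \to X$ is the normalization of $X$. Its final clause gives in addition that this normalization is almost non-singular, since $X$ is sectionally smooth rational. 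This establishes items (a), (b) and (c) with no further work.

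Next I would identify the scroll type. As $\widetilde{X}$ is a non-degenerate rational surface scroll of degree $d$ in $\mathbb{P}^{d+1}$, it is projectively a rational normal scroll $S(a_1, a_2)$ with $0 \leq a_1 \leq a_2$ and $a_1 + a_2 = d$; putting $a := a_1$ yields $\widetilde{X} = S(a, d - a)$ with $0 \leq a \leq d/2$, which gives the existence of $a$. For uniqueness I would use that the normalization $\pi_\Lambda \colon \widetilde{X} \to X$ is unique up to isomorphism over $X$, so that $\widetilde{X}$ is a well-defined abstract surface; since the scrolls $S(a, d-a)$ are pairwise non-isomorphic for distinct admissible values of $a$ (their minimal sections have self-intersection $2a - d$, and $S(0, d)$ is singular whereas the scrolls with $a > 0$ are smooth), the value of $a$ is determined.

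Finally, for (d) I would combine two equivalences. On one hand, $S(a, d - a)$ is smooth if and only if $a > 0$: for $a \geq 1$ it is a smooth scroll, whereas $S(0, d)$ is the cone over the rational normal curve of degree $d$ and is singular at its vertex. On the other hand, $\widetilde{X}$ is a cone if and only if $X$ is a cone. For the forward direction, if $\widetilde{X} = S(0, d)$ has vertex $v$, then $v \notin \Lambda$ because $\widetilde{X} \cap \Lambda = \emptyset$, so $\pi_\Lambda$ carries the rulings through $v$ to lines through $p := \pi_\Lambda(v)$ and exhibits $X$ as a cone with vertex $p$; conversely, if $X$ is a cone then its normalization $\widetilde{X}$ is again a cone, hence singular. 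Combining the two equivalences gives (d).

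The main obstacle is precisely the equivalence in (d) tying the cone property of $X$ to the singularity of $\widetilde{X}$, since the other claims are near-immediate consequences of Theorem~\ref{theorem 2.5}. The direction sending a cone upstairs to a cone downstairs is the clean ruling argument above, but the converse rests on the fact that the normalization of a projective cone over a curve is again a cone; this is where I would lean on the precise description of the normalization recorded alongside Theorem~\ref{theorem 2.5} in \cite{BLPS1} rather than reproving it from scratch.
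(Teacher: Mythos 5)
Your proof is correct and follows the paper's own route: the paper states this corollary without proof, as a direct specialization of Theorem~\ref{theorem 2.5} (that is, of \cite[Theorem 4.1]{BLPS1}) to the case $n=2$, which is exactly your starting point. The details you add beyond that citation --- the classification of degree-$d$ rational surface scrolls in $\mathbb{P}^{d+1}$, the uniqueness of $a$ via the abstract isomorphism type of the normalization, and both directions of the cone/singularity equivalence in (d) --- are sound and simply make explicit what the paper delegates to \cite{BLPS1}.
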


\begin{definition}
In the above situation, we call $\widetilde{X}= S(a,d-a)$ the \textit{projecting scroll} of the surface $X$, $\Lambda  \subset \mathbb{P}^r$ the \textit{projecting center} for $X$ and $\pi_\Lambda:\widetilde{X} \twoheadrightarrow X$ the \textit{standard normalization} of $X$.\\
\end{definition}

\subsection*{Algebraic and cohomological properties} The precise aim of this section is to investigate a few algebraic and geometric properties of sectionally smooth rational surfaces, which are encoded in Corollary~\ref{corollary 2.7}. We begin with a few preliminaries.

\begin{notation and reminder}\label{2.7' Notation and Reminder}
Let $X \subset \mathbb{P}^r = \Proj(S := \Bbbk[x_0,\ldots,x_r])$ be a non-degenerate irreducible projective surface of degree $d$, homogeneous vanishing ideal $I \subset S$ and homogeneous coordinate ring $A = S/I$.

\noindent (A) (see \cite{BS6}) For any graded ideal $\mathfrak{b} \subseteq A_+ := S_+A$ and any graded $A$-module $M$ let $D_\mathfrak{b}(M) := \varinjlim \Hom_A({\mathfrak b}^n, M)$ denote the $\mathfrak{b}$-transform of $M$, and let $H^i_\mathfrak{b}(M), \quad (i \in \mathbb{N}_0)$ denote the $i$-th local cohomology module of $M$, both furnished with their natural grading. We usulally will write $H^i(M)$ instead of $H^i_{A_+}(M).$  \\
Let ${\mathfrak a} \subseteq A_+$ be the graded radical ideal which defines the non-Cohen-Macaulay locus $X \setminus \operatorname{CM}(X)$ of $X$. Observe that height ${\mathfrak a} \geq 2,$ so that the $\mathfrak{a}$-transform
\begin{equation*}
B(A):= D_{\mathfrak a}(A) = \varinjlim \Hom_A({\mathfrak a}^n, A) = \bigcup_{n\in \mathbb{N}}(A:_{\rm{Quot}(A)}\mathfrak{a}^n) =
\bigoplus _{n \in {\mathbb Z}} \Gamma (\operatorname{CM}(Z), {\mathcal O}_Z (n))
\end{equation*}
of $A$ is a positively graded finite birational integral extension domain of $A$. In particular $B(A)_0 = \Bbbk$. Moreover $B(A)$
has the second Serre-property $S_2$. As $\mbox{Proj}(B(A))$ is of dimension $2$, it thus is a locally Cohen-Macaulay scheme.

If $E$ is a finite graded integral extension domain of $A$ which satisfies the property $S_2$, we have $A \subset B(A) \subset E$. So $B(A)$ is the least finite
graded integral extension domain of $A$ which has the property $S_2$. Therefore, we call $B(A)$ the $S_2${\it -cover of} $A$. We also can describe $B(A)$ as the endomorphism
ring $\mbox{End}(K(A), K(A))$ of the canonical module $K(A) = K^3(A) = \mbox{Ext}^{r-2}_S(A, S(-r-1))$ of $A$.

\noindent (B) The inclusion map $A \rightarrow B(A)$ gives rise to a finite morphism
$$\pi: \widetilde{X}:= \Proj(B(A)) \twoheadrightarrow X, \mbox{ with } \Sing(\pi) = X \setminus \CM(X).$$
In particular $\pi$ is almost non-singular and hence birational. Moreover, for any finite morphism $\rho: Y \twoheadrightarrow X$ such that $Y$ is locally Cohen-Macaulay, there is a unique morphism $\sigma:Y \rightarrow \widetilde{X}$ such that $\rho = \pi \circ\sigma$. In addition $\sigma$ is an
isomorphism if and only if $\Sing(\rho) = X \setminus \CM(X)$. Therefore, the morphism $\pi:\widetilde{X} \twoheadrightarrow X$ is addressed as the \textit{finite Macaulayfication} of $X$. Keep in mind, that -- unlike to what happens with normalization -- there may be proper birational morphisms
$\tau:Z \twoheadrightarrow X$ with $Z$ locally Cohen-Macaulay, which do not factor through $\pi$ (see \cite{B0}).

\noindent (C) We also introduce the invariants
\begin{equation*}
\e_x(X) := \length(H^1_{\mathfrak{m}_{X,x}}(\mathcal{O}_{X,x})),
(x \in X \mbox{ closed }) \; \mbox{ and }
\e(X):=  \sum_{x\in X, {\rm{closed}}} \e_x(X).
\end{equation*}
Note that the latter counts the {\it number of non-Cohen-Macaulay points} of $X$
in a weighted way. Keep in mind that
$$\e(X) = h^1 (X, \mathcal{O}_X(j)) = h^2(\mathbb{P}^r,\mathcal{I}_X(j)) \mbox{ for all } j \ll 0.$$
\end{notation and reminder}

Now, we are ready to prove the following result on sectionally smooth rational surfaces.

\begin{theorem}
\label{theorem 2.9}
Let $X \subset \mathbb{P}^r$ be a non-degenerate irreducible projective and sectionally smooth rational surface of degree $d$ with $r \geq 4$ and $d \geq r+2$. Then it holds
\begin{itemize}
\item[\rm{(a)}] $\reg(X) \leq d - r + 3.$
\item[\rmfamily{(b)}] $\Reg(X) = \Nor(X) = \CM(X)$.
\item[\rm{(c)}] The homogeneous coordinate ring of the projecting scroll $\widetilde{X}\subset \mathbb{P}^{d+1}$ is the $S_2$-cover $B(A)$ of the homogeneous coordinate ring $A$ of $X \subset \mathbb{P}^r$ and the standard normalization $\pi_{\Lambda}: \widetilde{X} \rightarrow X$ is the finite Macaulayfication of $X$.

\item[\rm{(d)}] If $\e(X) = 0,$ then $h^2(\mathbb{P}^r,\mathcal{I}_X(j)) = 0$ for all $j \in \mathbb{Z}$. If $\e(X) \neq 0$, then
                $$h^2(\mathbb{P}^r,\mathcal{I}_X(j)) \begin{cases} = \e(X)                                                  &\mbox{ for all } j \leq 0;\\
                                                                  = \e(X) + h^1(\mathbb{P}^r,\mathcal{I}_X(1)) + r - d -1                 &\mbox{ for } j=1;\\
                                                                  \leq \max\{0, h^2(\mathbb{P}^r,\mathcal{I}_X(j-1)) -1 \}                 &\mbox{ for all } j > 1;\\
                                                                  \leq 1                                                                   &\mbox{ for } j = d-r;\\
                                                                  = 0                                                                      &\mbox{ for all } j \geq d-r+1.
                                                    \end{cases}$$
\item[\rm{(e)}] $h^3(\mathbb{P}^r, \mathcal{I}_X(j)) =
                h^2(\widetilde{X},\mathcal{O}_{\widetilde{X}}(j))$ for all $j \in \mathbb{Z}$, thus
                $$h^3(\mathbb{P}^r, \mathcal{I}_X(j))= \begin{cases} \frac{(j+1)(dj+2)}{2} &\mbox{ for all } j \leq - 2,\\
                0                     &\mbox{ for all } j \geq -1. \end{cases}$$

\end{itemize}
\end{theorem}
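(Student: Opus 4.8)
The plan is to establish each claim by combining the explicit description of $X$ as an almost non-singular projection of the surface scroll $\widetilde{X} = S(a,d-a)$ (Corollary~\ref{corollary 2.7}) with the general regularity bound of Theorem~\ref{2.5' Theorem} and the cohomological dictionary of Notation and Reminder~\ref{2.7' Notation and Reminder}. For part (a), I would apply Theorem~\ref{2.5' Theorem} with $X' = \widetilde{X}\subset\mathbb{P}^{d+1}$, so that $r' = d+1$ and $t = r'-r = d+1-r$. Since rational normal scrolls are varieties of minimal degree, they satisfy property $N_{2,p}$ for all $p$ (in fact they are arithmetically Cohen-Macaulay with a $2$-linear resolution), so the hypothesis $p \geq \max\{2,r'-r+1\}$ is met, and $\reg(\widetilde{X}) = 2$. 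Theorem~\ref{2.5' Theorem}(b) then gives $\reg(X) \leq \max\{2, (d+1)-r+2\} = d-r+3$, which is exactly (a).

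For part (c), the key point is that $B(A)$ is characterized as the least finite graded integral $S_2$-extension of $A$, equivalently the finite Macaulayfication $\pi:\Proj(B(A))\twoheadrightarrow X$. Since $\widetilde{X} = S(a,d-a)$ is a scroll, it is smooth when $a>0$ and a cone (hence still locally Cohen-Macaulay as a $2$-dimensional normal, indeed arithmetically Cohen-Macaulay, variety) when $a=0$; in either case its homogeneous coordinate ring is a normal $S_2$ (even Cohen-Macaulay) domain that is a finite birational extension of $A$ via the standard normalization $\pi_\Lambda$. By the universal property in Notation and Reminder~\ref{2.7' Notation and Reminder}(B), applied to $\rho = \pi_\Lambda$, there is a unique $\sigma:\widetilde{X}\to\Proj(B(A))$ over $X$, and $\sigma$ is an isomorphism precisely when $\Sing(\pi_\Lambda) = X\setminus\CM(X)$. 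So the heart of (c) is identifying $\Sing(\pi_\Lambda)$ with the non-Cohen-Macaulay locus of $X$; I expect this to follow because $\pi_\Lambda$ is the normalization and, for these surfaces, the non-normal locus coincides with the non-CM locus, giving $\CM(X) = \Nor(X)$. This identification is also what delivers (b), once one adds the inclusion $\Reg(X)\subseteq\Nor(X)$ (normal surfaces are regular in codimension one but here one needs that the smooth locus, normal locus and CM locus all agree away from the finitely many projection-collision points); this will use that $\widetilde{X}$ is smooth in the non-cone case so that $\pi_\Lambda$ is an isomorphism exactly over $\Reg(X)$.

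For parts (d) and (e), I would pass through the short exact sequence $0\to A\to B(A)\to B(A)/A\to 0$ together with the sheaf-cohomology translations $h^2(\mathbb{P}^r,\mathcal{I}_X(j)) = h^1(X,\mathcal{O}_X(j))$ and $h^3(\mathbb{P}^r,\mathcal{I}_X(j)) = h^2(X,\mathcal{O}_X(j)) = h^2(\widetilde{X},\mathcal{O}_{\widetilde{X}}(j))$, the last equality holding because $\pi_\Lambda$ is finite birational with $R^i\pi_{\Lambda*}=0$ for $i>0$ and $\pi_{\Lambda*}\mathcal{O}_{\widetilde{X}}$ differs from $\mathcal{O}_X$ only on a finite set (so the difference contributes nothing to $H^2$). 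This immediately yields (e), and the explicit formula for $h^2(\widetilde{X},\mathcal{O}_{\widetilde{X}}(j))$ is a standard computation on the scroll $S(a,d-a)$ using the projection to $\mathbb{P}^1$ and the Leray/direct-image formula for $\mathcal{O}(j)$; the vanishing for $j\geq -1$ reflects that the scroll is projectively normal with $2$-linear resolution. For (d), the invariant $\e(X)$ equals $h^1(X,\mathcal{O}_X(j))$ for $j\ll 0$ by Notation and Reminder~\ref{2.7' Notation and Reminder}(C), the stabilization for $j\leq 0$ and vanishing for $j\geq d-r+1$ come from (a) (regularity $\leq d-r+3$ forces $h^1(\mathcal{I}_X(j))=0$, i.e. $h^2(\mathcal{I}_X(j))$ controlled, for large $j$), and the recursive estimate $h^2(\mathcal{I}_X(j))\leq\max\{0,h^2(\mathcal{I}_X(j-1))-1\}$ together with the $j=1$ identity follows from a general-hyperplane-section argument comparing $X$ with its generic curve section, using the exact sequence twisting by a linear form.

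I expect the main obstacle to be part (c), specifically the precise identification $\Sing(\pi_\Lambda) = X\setminus\CM(X)$ and thereby the equality $\CM(X)=\Nor(X)$ in (b): one must rule out that a collision of two sheets under the projection could produce a point that is non-normal yet still Cohen-Macaulay (a planar double point), and conversely that the non-CM points are exactly the projection collisions. This is where the almost-non-singularity of $\pi_\Lambda$ (finiteness of $\Sing(\pi_\Lambda)$) and the smoothness of $\widetilde{X}$ in the non-cone case are essential, and it is the step I would write out most carefully. The cohomological parts (d) and (e), by contrast, are largely bookkeeping once the scroll cohomology of $S(a,d-a)$ is in hand.
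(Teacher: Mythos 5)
Your overall architecture coincides with the paper's: part (a) is proved exactly as you propose (Theorem~\ref{2.5' Theorem} applied to $\widetilde{X}\subset\mathbb{P}^{d+1}$ with $\reg(\widetilde{X})=2$), part (e) by the same finite-birational, finite-support argument, and part (c) reduces, as in the paper, to part (b). The first genuine gap is that you never actually prove part (b), and your sketch of it is circular: you write that $\CM(X)=\Nor(X)$ should follow ``because $\pi_\Lambda$ is the normalization and, for these surfaces, the non-normal locus coincides with the non-CM locus'' --- but that coincidence is precisely the assertion to be proved, and you defer it without supplying the idea that makes it work. The paper's argument is short but indispensable: for a closed point $x\in\CM(X)$, the module $Q:=\big((\pi_{\Lambda *}\mathcal{O}_{\widetilde{X}})/\mathcal{O}_X\big)_x$ has finite length (this is where almost non-singularity enters), so the local cohomology sequence of $0\to\mathcal{O}_{X,x}\to(\pi_{\Lambda *}\mathcal{O}_{\widetilde{X}})_x\to Q\to 0$ embeds $Q$ into $H^1_{\mathfrak{m}_{X,x}}(\mathcal{O}_{X,x})$, which vanishes because $\mathcal{O}_{X,x}$ is Cohen--Macaulay of dimension $2$; hence $Q=0$ and $x\notin\Sing(\pi_\Lambda)$. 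This is exactly what excludes your hypothetical non-normal but Cohen--Macaulay collision point. Moreover, your passage from $\Nor(X)$ to $\Reg(X)$ invokes smoothness of $\widetilde{X}$, which fails precisely when $X$ is a cone --- a case the theorem covers. The paper disposes of it by a tangent-space count: if a point $x\in\CM(X)$ were singular, the local isomorphism $\mathcal{O}_{\widetilde{X},\widetilde{x}}\cong\mathcal{O}_{X,x}$ would force $\widetilde{x}$ to be the vertex of the scroll, whence $\dim \mathrm{T}_x(X)=d+1$, impossible since $X\subset\mathbb{P}^r$ and $r<d+1$.

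There is a second gap, in part (d). The hyperplane-section sequence $0\to\mathcal{I}_X(j-1)\to\mathcal{I}_X(j)\to\mathcal{I}_{\mathcal{C}}(j)\to 0$ yields, for $j\geq 1$, only a surjection $H^2(\mathbb{P}^r,\mathcal{I}_X(j-1))\twoheadrightarrow H^2(\mathbb{P}^r,\mathcal{I}_X(j))$ (because $h^2(\mathbb{P}^{r-1},\mathcal{I}_{\mathcal{C}}(j))=h^1(\mathcal{O}_{\mathcal{C}}(j))=0$ there), i.e.\ the weak inequality $h^2(\mathcal{I}_X(j))\leq h^2(\mathcal{I}_X(j-1))$; it gives no lower bound on the kernel, hence not the strict drop $h^2(\mathcal{I}_X(j))\leq\max\{0,h^2(\mathcal{I}_X(j-1))-1\}$. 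The paper obtains the strict drop from structure you do not invoke: it identifies $H^2_{*}(\mathbb{P}^r,\mathcal{I}_X)\cong H^1(C)$, where $C=B(A)/D_{A_+}(A)$ is a one-dimensional Cohen--Macaulay module \emph{generated in degree} $1$ (this uses $\reg(B(A))=1$, i.e.\ projective normality of the scroll), and then cites \cite{BS1} for the resulting Hilbert-function behaviour. You also omit the case $j=d-r$ entirely: the bound $h^2(\mathcal{I}_X(d-r))\leq 1$ is obtained in the paper by feeding the curve-level strict decrease $h^1(\mathcal{I}_{\mathcal{C}}(j+1))\leq\max\{0,h^1(\mathcal{I}_{\mathcal{C}}(j))-1\}$, starting from $h^1(\mathcal{I}_{\mathcal{C}}(1))=d-r+1$, back through the same sequence. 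By contrast, your treatment of $j\leq 0$, of the $j=1$ identity (which indeed follows from the hyperplane sequence once $h^2(\mathcal{I}_X)=\e(X)$ and $h^1(\mathcal{I}_{\mathcal{C}}(1))=d-r+1$ are known), and of the vanishing for $j\geq d-r+1$ is sound and agrees with the paper.
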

\begin{proof}
(a): The projecting scroll $\widetilde{X} \subset \mathbb{P}^{d+1}$ satisfies the conditions $N_{2,p}$ for all $p \in \mathbb{N}$, so that $\reg(\widetilde{X}) = 2$. Therefore, by Theorem~\ref{2.5' Theorem} we get $\reg(X) \leq (d-1)-r+2 =d-r+3$, and this proves our claim.\\
(b): As $X$ is a surface, we have  $\Reg(X) \subset \Nor(X) \subset \CM(X)$. Therefore, it suffices to show that $\CM(X) \subset \Reg(X).$\\
So, let $x \in \CM(X)$ be a closed point. We always write $\pi := \pi_\Lambda$. Then
$\big(\pi_*\mathcal{O}_{\widetilde{X}}\big)_x$ is a finite integral extension domain of the local $2$-dimensional CM ring $(\mathcal{O}_{X,x},\mathfrak{m}_{X,x})$. As the morphism $\pi: \widetilde{X} \rightarrow X$ is almost non-singular,
the finitely generated $\mathcal{O}_{X,x}$-module
$$\big((\pi_*\mathcal{O}_{\widetilde{X}})/ \mathcal{O}_X\big)_x = \big(\pi_*\mathcal{O}_{\widetilde{X}}\big)_x/ \mathcal{O}_{X,x}$$
is annihilated by some power of $\mathfrak{m}_{X,x}$ and hence contained in the local cohomology module $H^1_{\mathfrak{m}_{X,x}}(\mathcal{O}_{X,x})$. But this latter module vanishes because $\mathcal{O}_{X,x}$ is a local $\CM$-ring of dimension $>1$. This shows, that $x \notin \mathrm{Sing}(\pi)$. \\
But this means, that $x$ has a unique preimage $\widetilde{x} \in \widetilde{X}$ under the morphism $\pi$ and that
$$\mathcal{O}_{\widetilde{X},\widetilde{x}} \cong \mathcal{O}_{X,x}.$$
Assume now, that $x \notin \Reg(X).$ Then $\widetilde{x} \notin
\Reg(\widetilde{X})$. This means that $\widetilde{X} \subset \mathbb{P}^{d+1}$ is a singular $2$-fold scroll with vertex $\widetilde{x}$. But this implies that the tangent space $\mathrm{T}_{\widetilde{x}}(\widetilde{X})$ of $\widetilde{X}$ at $\widetilde{x}$ has dimension $d+1$. In view of the above isomorphism, we thus get the contradiction that the tangent space $\mathrm{T}_x(X)$ of $X \subset \mathbb{P}^r$ at $x$ has dimension $d+1$. This proves that indeed $x \in \Reg(X)$. \\
(c): Let $E$ denote the homogeneous coordinate ring of the projecting scroll $\widetilde{X} \subset \mathbb{P}^{d+1}$. As $E$ is a CM ring, we have canonical inclusions of graded rings
$$A \subset B(A)=\bigcup_{n\in \mathbb{N}}(A:_{\rm{Quot}(A)}\mathfrak{a}^n) \subset E$$ (see Notation and Reminder~\ref{2.7' Notation and Reminder} (A)). Keeping in mind statement (b) and observing that the projection morphism $\pi_\Lambda: \widetilde{X} \longrightarrow X$ provides the normalization of $X$, we thus get
$$\Proj(A/\mathfrak{a}) =  X \setminus \CM(X) = X \setminus \Nor(X) = \Sing(\pi_{\Lambda})$$
and hence $E \subset
\bigcup_{n\in \mathbb{N}}(A:_{\rm{Quot}(A)}\mathfrak{a}^n) = B(A)$. Therefore $E = B(A)$ and statement (c) is shown.\\
(d): Let $B := B(A)$ as above, let $D:= D_{A_+}(A)$ and consider the short exact sequence of graded $S$-modules
$$0 \longrightarrow D \longrightarrow B \longrightarrow C \longrightarrow 0.$$
Observe that $\dim(C) \leq 1$, $\depth(C) > 0$ and that
$\widetilde{C} \cong \mathcal{F} := \pi_*
\mathcal{O}_{\widetilde{X}}/\mathcal{O}_X$, so that
$$\dim_{\Bbbk}(D_{A_+}(C)_j) = \e(X) \mbox{ for all } j \in \mathbb{Z}.$$
As $B$ is a Cohen-Macaulay module of dimension $3$, we have
$$H^2_{*}(\mathbb{P}^r,\mathcal{I}_X) \cong H^2(A) \cong H^2(D) \cong H^1(C) \cong D_{A_+}(C)/C.$$
Hence, if $\e(X) = 0$, we have indeed $h^2(\mathbb{P}^r,\mathcal{I}_X(j)) = 0$ for all $j \in \mathbb{Z}$.\\

So, let $\e(X) > 0$. As $C_j = 0$ for all $j \leq 0$ we get that $h^2(\mathbb{P}^r,\mathcal{I}_X(j))=\e(X)$ for all $j \leq 0$. As $\dim_{\Bbbk}(D_1) = r+1 + h^1(\mathbb{P}^r,\mathcal{I}_X(1))$, we have $\dim_{\Bbbk}(C_1) = \dim_{\Bbbk}(B_1) - \dim_{\Bbbk}(D_1) = d+2 - \big(r+1+h^1(\mathbb{P}^r,\mathcal{I}_X(1))\big)$ and hence
\begin{align*}h^2(\mathbb{P}^r,\mathcal{I}_X(1))  &= \dim_\Bbbk(D_{A_+}(C)_1)-\dim_\Bbbk(C_1) = \e(X) - \dim_\Bbbk(C_1) = \\ &= \e(X) - h^1(\mathbb{P}^r,\mathcal{I}_X(1)) + r - d -1.\end{align*}
Observe that $C$ is a Cohen-Macaulay module of dimension $1$. Moreover, the regularity of $B$ as an $A$-module and as a $B$-module take the same value $\reg(B) = \reg(\widetilde{X})-1 = 1$, so that the $A$-module $B$ is generated in degree $1$. Therefore, the $A$-module $C$ is generated in degree $1$, and hence $\dim_{\Bbbk}(H^1(C)_j) \leq \max\{0, \dim_{\Bbbk}(H^1(C)_{j-1})-1\}$ for all $j > 1$ (see for example \cite{BS1}). Therefore we get indeed  $h^2(\mathbb{P}^r,\mathcal{I}_X(j)) \leq \max\{0, h^2(\mathbb{P}^r,\mathcal{I}_X(j-1)) -1 \}$ for all $j > 1.$\\
By statement (a) we have $h^2(\mathbb{P}^r,\mathcal{I}_X(j)) = 0$ for all $j \geq d-r+1$. \\
Finally, let us consider the exact sequence
$$0 \longrightarrow \mathcal{I}_X(-1) \longrightarrow \mathcal{I}_X \longrightarrow \mathcal{I}_{\mathcal{C}} \longrightarrow 0,$$
where $\mathcal{C} = X \cap \mathbb{H} \quad (\mathbb{H}\in
\mathbb{G}(r-1,\mathbb{P}^r))$ is a general hyperplane section of
$X$. As $\mathcal{C} \subset \mathbb{H} = \mathbb{P}^{r-1}$ is a
smooth rational curve of degree $d$, we have $h^1(\mathbb{P}^{r-1},
\mathcal{I}_{\mathcal{C}}(1)) = d-r+1$ and
$h^1(\mathbb{P}^{r-1},\mathcal{I}_{\mathcal{C}}(j+1)) \leq \mathrm{max}\{0, h^1(\mathbb{P}^{r-1},\mathcal{I}_{\mathcal{C}}(j))-1\}$ for all $j \geq 1$,
so that $h^1(\mathbb{P}^{r-1},\mathcal{I}_{\mathcal{C}}(d-r+1)) \leq 1.$ Applying the above exact sequence and keeping in mind that $h^2(\mathbb{P}^r,\mathcal{I}_X(d-r+1)) = 0$ we get $h^2(\mathbb{P}^r,\mathcal{I}_X(d-r)) \leq 1.$ and this proves our claim. \\
(e): Let the notation be as above. As the sheaf $\mathcal{F} :=
\pi_* \mathcal{O}_{\widetilde{X}}/\mathcal{O}_X$ has finite support,
the sequence
$$0 \longrightarrow \mathcal{O}_X \longrightarrow  \pi_* \mathcal{O}_{\widetilde{X}} \longrightarrow \mathcal{F}  \longrightarrow 0$$
together with the well known formulas for the cohomology of a rational surface scroll yields that
$$h^3(\mathbb{P}^r, \mathcal{I}_X(j)) = h^2(X,\mathcal{O}_X(j))= h^2(\widetilde{X},\mathcal{O}_{\widetilde{X}}(j)) = \begin{cases} \frac{(j+1)(dj+2)}{2}, &\mbox{ if } j \leq - 2,\\
                0,                     &\mbox{ if } j \geq -1. \end{cases}$$
This proves statement (e).
\end{proof}

\subsection*{Local properties} Finally, we want to give the following result, in which $\mathrm{mult}_z(Z)$ is used to denote the \textit{multiplicity} of the noetherian scheme $Z$ at the point $z \in Z.$

\begin{proposition}
\label{prop:loc,properties} Let $X \subset \mathbb{P}^r$ be a non-degenerate irreducible projective and sectionally smooth rational surface of degree $d$ with $r \geq 4$ and $d \geq r+2$. Let $\pi = \pi_\Lambda: \widetilde{X} \twoheadrightarrow X$ be the standard normalization of $X$. Then it holds:
\begin{itemize}
\item[\rm{(a)}] If $x \in \mathrm{Sing}(X)$, then $2 \leq \mathrm{length}(\pi^{-1}(x)) \leq \mathrm{max}\{\mathrm{mult}_x(X),\mathrm{e}_x(X)+1\}.$
\item[\rm{(b)}] If $\mathbb{K} \in \mathbb{G}(k,\mathbb{P}^r)$ with $0 \leq k \leq r-1$ and $\mathrm{dim}(X \cap \mathbb{K}) \leq 0$, then
$$\mathrm{length}\big(\Reg(X) \cap \mathbb{K}\big) + 2\#\big(\Sing(X) \cap \mathbb{K}\big) \leq d-r+k+2.$$
\item[\rm{(c)}] If $\mathbb{L} \in \mathbb{G}(1,\mathbb{P}^r)$ is a proper extremal secant line to $X$ with $X \cap \mathbb{L} \subset \mathrm{Reg}(X)$, and $x \in \mathrm{Sing}(X)$, then $\dim(X \cap \langle x,\mathbb{L}\rangle) = 1$.
\end{itemize}
\end{proposition}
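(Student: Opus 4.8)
The plan is to exploit the standard normalization $\pi = \pi_\Lambda \colon \widetilde X = S(a,d-a) \twoheadrightarrow X$ of Corollary~\ref{corollary 2.7}, keeping in mind that $\widetilde X \subset \mathbb P^{d+1}$ is a surface of minimal degree $d$ and that, by Theorem~\ref{theorem 2.9}\,(b)--(c), the morphism $\pi$ is almost non-singular with singular locus $\Sing(\pi) = X \setminus \CM(X) = X \setminus \Reg(X) = \Sing(X)$. For a closed point $x$ I abbreviate $\overline{\mathcal O} := (\pi_* \mathcal O_{\widetilde X})_x$, and for a $k$-plane $\mathbb K \subset \mathbb P^r$ I write $\widetilde{\mathbb K} := \langle \Lambda, \mathbb K \rangle = \overline{\pi_\Lambda^{-1}(\mathbb K)} \subset \mathbb P^{d+1}$, a linear space of dimension $\dim(\Lambda)+1+k = d-r+k+1$; since $\widetilde X \cap \Lambda = \emptyset$ one has $\widetilde X \cap \widetilde{\mathbb K} = \pi^{-1}(X \cap \mathbb K)$ as schemes, and this is finite precisely when $\dim(X \cap \mathbb K) \le 0$.

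For part (a), the lower bound $2 \le \length(\pi^{-1}(x))$ is immediate from $x \in \Sing(X) = \Sing(\pi)$ and the definition of $\Sing(\pi)$ in Definition and Remark~\ref{definition and remark 2.4}\,(A). For the upper bound I would first note that $\overline{\mathcal O}$ is a Cohen--Macaulay ring of dimension $2$ (a finite product of local rings of the scroll $\widetilde X$, which is arithmetically Cohen--Macaulay even at a cone vertex). Applying local cohomology to $0 \to \mathcal O_{X,x} \to \overline{\mathcal O} \to N \to 0$, where $N := \overline{\mathcal O}/\mathcal O_{X,x}$ has finite length, and using $\depth(\mathcal O_{X,x}) \ge 1$ together with $H^0_{\mathfrak m}(\overline{\mathcal O}) = H^1_{\mathfrak m}(\overline{\mathcal O}) = 0$, I obtain $N \cong H^1_{\mathfrak m_{X,x}}(\mathcal O_{X,x})$, so $\length(N) = \e_x(X)$. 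Tensoring the same sequence with $\Bbbk = \mathcal O_{X,x}/\mathfrak m_{X,x}$ then yields $\length(\pi^{-1}(x)) = \dim_\Bbbk(\overline{\mathcal O}/\mathfrak m_{X,x}\overline{\mathcal O}) \le 1 + \dim_\Bbbk(N/\mathfrak m_{X,x}N) \le 1 + \length(N) = \e_x(X)+1$. Independently, choosing a minimal reduction $\mathfrak q \subset \mathfrak m_{X,x}$ of $\mathfrak m_{X,x}$ and using that multiplicity is preserved under the finite birational extension $\mathcal O_{X,x} \subset \overline{\mathcal O}$ gives $\mult_x(X) = \length(\overline{\mathcal O}/\mathfrak q \overline{\mathcal O}) \ge \length(\overline{\mathcal O}/\mathfrak m_{X,x}\overline{\mathcal O}) = \length(\pi^{-1}(x))$. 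Either bound yields the asserted maximum.

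For part (b) I would pass to $\widetilde X$. Over the open set $\Reg(X)$ the morphism $\pi$ is an isomorphism, while over each point of $\Sing(X) \cap \mathbb K$ the fibre has length $\ge 2$ by part (a); comparing the finite schemes $X \cap \mathbb K$ and $\widetilde X \cap \widetilde{\mathbb K} = \pi^{-1}(X \cap \mathbb K)$ point by point therefore gives
$$\length\big(\Reg(X) \cap \mathbb K\big) + 2\#\big(\Sing(X) \cap \mathbb K\big) \le \length\big(\widetilde X \cap \widetilde{\mathbb K}\big).$$
The statement then reduces to the inequality $\length(\widetilde X \cap \widetilde{\mathbb K}) \le \dim(\widetilde{\mathbb K})+1 = d-r+k+2$. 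This is where the minimal degree of $\widetilde X$ enters: a finite intersection of a variety of minimal degree with a linear space $\mathbb P^m$ has length at most $m+1$, and I expect this to be the main obstacle. One clean route is cohomological: $\widetilde X$ is $2$-regular, the finite section $W := \widetilde X \cap \widetilde{\mathbb K}$ should again be $2$-regular in $\widetilde{\mathbb K} = \mathbb P^m$, so that $h^1(\widetilde{\mathbb K}, \mathcal I_{W/\widetilde{\mathbb K}}(1)) = 0$ and the sequence $0 \to \mathcal I_{W/\widetilde{\mathbb K}}(1) \to \mathcal O_{\widetilde{\mathbb K}}(1) \to \mathcal O_W(1) \to 0$ forces $\length(W) = h^0(\mathcal O_W(1)) \le h^0(\mathcal O_{\widetilde{\mathbb K}}(1)) = m+1$. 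The delicate point is the $2$-regularity of the possibly improper finite section $W$; I would justify it by reducing to the classical fact that the points of a rational normal curve lie in linearly general position -- extending $\widetilde{\mathbb K}$ to a general $(d-1)$-plane $\widetilde{\mathbb K}' \supseteq \widetilde{\mathbb K}$, so that $\widetilde X \cap \widetilde{\mathbb K}'$ is a set of $d$ points in linearly general position containing $W$, and observing that at most $m+1$ of them can lie in the $m$-plane $\widetilde{\mathbb K}$ (with the infinitesimal version of general position handling non-reduced $W$).

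Finally, part (c) follows formally from (b). Since $\mathbb L \cap X \subset \Reg(X)$ while $x \in \Sing(X)$, we have $x \notin \mathbb L$, so $\mathbb K := \langle x, \mathbb L\rangle$ is a plane, i.e.\ $k = 2$. Suppose, for contradiction, that $\dim(X \cap \mathbb K) \le 0$. Then part (b) applies and gives $\length(\Reg(X)\cap\mathbb K) + 2\#(\Sing(X)\cap\mathbb K) \le d-r+4$. On the other hand $X \cap \mathbb L$ is a closed subscheme of $X \cap \mathbb K$ of length $d-r+3$ supported in $\Reg(X)$, whence $\length(\Reg(X)\cap\mathbb K) \ge d-r+3$, while $x \in \Sing(X)\cap\mathbb K$ gives $2\#(\Sing(X)\cap\mathbb K) \ge 2$; this forces $d-r+5 \le d-r+4$, a contradiction. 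Hence $\dim(X \cap \mathbb K) \ge 1$, and since the irreducible surface $X$ of degree $d > 1$ cannot contain the plane $\mathbb K$, we conclude $\dim(X \cap \langle x, \mathbb L\rangle) = 1$, as claimed.
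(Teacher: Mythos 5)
Your argument follows the same route as the paper's own proof in all three parts: in (a) you use the Cohen--Macaulay finite birational extension $(\pi_*\mathcal{O}_{\widetilde{X}})_x \supset \mathcal{O}_{X,x}$ and the identification of the quotient with $H^1_{\mathfrak{m}_{X,x}}(\mathcal{O}_{X,x})$ (the paper stops at this isomorphism and says the claim ``follows easily''; your Nakayama-type estimate and your multiplicity estimate via a minimal reduction are both correct, and together even give the stronger bound by the minimum of the two quantities); in (b) you pass to the closed preimage $\widetilde{\mathbb{K}} \in \mathbb{G}(d-r+k+1,\mathbb{P}^{d+1})$ and compare fibre lengths over $\Reg(X)\cap\mathbb{K}$ and $\Sing(X)\cap\mathbb{K}$, exactly as the paper does; and your proof of (c) is the paper's proof, with the additional (correct) remark that the irreducible non-degenerate surface $X$ cannot contain the plane $\langle x,\mathbb{L}\rangle$.

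The one genuine gap is your justification of the inequality $\length(\widetilde{X}\cap\widetilde{\mathbb{K}}) \leq d-r+k+2$, which the paper simply asserts as a property of the surface scroll $\widetilde{X}\subset\mathbb{P}^{d+1}$, and which you rightly single out as the main obstacle. Your proposed argument --- extend $\widetilde{\mathbb{K}}$ to a general $(d-1)$-plane $\widetilde{\mathbb{K}}'\supseteq\widetilde{\mathbb{K}}$ and invoke linear general position of the $d$ points of $\widetilde{X}\cap\widetilde{\mathbb{K}}'$ --- is circular. The general position theorem concerns general, \emph{unconstrained} linear sections; here $W:=\widetilde{X}\cap\widetilde{\mathbb{K}}$ is contained in $\widetilde{X}\cap\widetilde{\mathbb{K}}'$ for \emph{every} member of your constrained family, so if $\length(W)>\dim(\widetilde{\mathbb{K}})+1$ held, then no such section would be in linearly general position: the statement you want to quote is equivalent to the inequality you are trying to prove, and genericity of $\widetilde{\mathbb{K}}'$ cannot rescue it. Two genuine repairs exist. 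First, make only one cut: choose a general hyperplane $\mathbb{H}'\subset\mathbb{P}^{d+1}$ containing $\widetilde{\mathbb{K}}$; then $\widetilde{X}\cap\mathbb{H}'$ is a rational normal curve of degree $d$ (this is \cite[Remark 2.3 (B)]{BP2}, used by the paper for exactly this purpose in the proof of Proposition~\ref{prop:extseclinesII}, with the cone case needing separate care), and then use that \emph{every} finite subscheme of length $\ell\leq d+1$ of a rational normal curve in $\mathbb{P}^d$ spans a linear space of dimension $\ell-1$; since $W$ lies on this curve and inside $\widetilde{\mathbb{K}}$, this forces $\length(W)\leq\dim(\widetilde{\mathbb{K}})+1$. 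Second, your cohomological suggestion can be completed by citing the theorem of Eisenbud--Green--Hulek--Popescu that $2$-regular schemes are ``small'', i.e.\ every finite linear section of a $2$-regular scheme has length at most the dimension of the cutting plane plus one; this covers the cone case uniformly, but it is a genuine theorem, not a formal consequence of the $2$-regularity of $\widetilde{X}$ alone.
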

\begin{proof} (a): Indeed, as $x$ is an isolated point of $\mathrm{Sing}(X) = \mathrm{Sing}(\pi_\Lambda)$ and as $\widetilde{X}$ is a Cohen-Macaulay surface, the semilocal ring $\mathcal{O}_{\widetilde{X},x} := \big({\pi}_{*}\mathcal{O}_{\widetilde{X},x}\big)_x$ is a Cohen-Macaulay finite integral extension domain of the local ring $\mathcal{O}_{X,x}$ and $H^1_{\mathfrak{m}_{X,x}}(\mathcal{O}_{X,x}) \cong \mathcal{O}_{\widetilde{X},x}/\mathcal{O}_{X,x}$. As $\pi^{-1}(x) \cong \mathcal{O}_{\widetilde{X},x}/\mathfrak{m}_{X,x}\mathcal{O}_{\widetilde{X},x}$ our claim follows easily. \\
(b): Let $\mathbb{K}' := \overline{(\pi'_\Lambda)^{-1}(\mathbb{K})} \in \mathbb{G}(d-r+k+1,\mathbb{P}^{d+1})$ be the closed preimage of $\mathbb{K}$ under the linear projection $\pi'_\Lambda:\mathbb{P}^{d+1} \setminus \Lambda \twoheadrightarrow \mathbb{P}^r$. Then $\widetilde{X} \cap \mathbb{K}' = \pi^{-1}(X \cap \mathbb{K})$, and so
the morphism $\pi: \widetilde{X} \twoheadrightarrow X$ induces an isomorphism
$$\pi\upharpoonright: \widetilde{X} \cap \mathbb{K}'\setminus \big[\pi^{-1}\big(\mathrm{Sing}(\pi)\cap \mathbb{K}\big)\big] \stackrel{\cong}{\longrightarrow} X \cap \mathbb{K} \setminus \big(\mathrm{Sing}(\pi)\cap \mathbb{K}\big) = \mathrm{Reg}(X) \cap \mathbb{K}.$$
Moreover $\widetilde{X} \cap \mathbb{K}'$ is of dimension $\leq 0.$ As $\widetilde{X} \subseteq \mathbb{P}^{d+1}$ is a surface scroll and $\mathbb{K} \in \mathbb{G}(d-r+k+1,\mathbb{P}^{d+1})$, we get that $\mathrm{length}(\widetilde{X} \cap \mathbb{K}') \leq d-r+k+2.$ Now, our statement follows by the first inequality of statement (a).\\
(c): Observe, that $\mathbb{K} := \langle x,\mathbb{L}\rangle$ is a plane. Assume that $\dim(X\cap \mathbb{K}) \leq 0$. Then, by statement (b) we get
$$d-r+5 \leq \mathrm{length}(X \cap \mathbb{L}) + 2 \leq \mathrm{length}\big(\mathrm{Reg}(X) \cap \mathbb{K}\big) + 2\#\big(\mathrm{Sing}(X)\cap \mathbb{K}\big)
\leq d-r+4.$$
This contradiction proves our claim.
\end{proof}

\section{Surfaces of Type I}

\subsection*{The Betti numbers} In this section we study the surfaces which fall under type I of our classification. We begin by investigating their Betti numbers.

\begin{convention and remark}
\label{convention and remark}
(A) Let $X \subset \mathbb{P}^5$ denote a projective surface contained in a smooth rational three-fold scroll in $\mathbb{P}^5$, hence that
$$X \subset W:= S(1,1,1) \subset \mathbb{P}^5.$$
We assume  furthermore that the divisor $X \subset W$ satisfies
$$X \approx H+(d-3)F \mbox{ for some } d \geq 5,$$
where $H$ is a hyperplane section and $F$ is a plane of $W$. Then it is easy to check that
\begin{equation*}
 \mathrm{deg} (X)=d \quad \mbox{and} \quad \mathrm{reg} (X)=d-2.
\end{equation*}

(B) With the definition of the Betti numbers and the Betti diagram we follow the notations suggested by D. Eisenbud (see \cite{E}). So, if $Z \subset \mathbb{P}^r$ is a closed subscheme, with homogeneous vanishing ideal $I_Z \subset S:= \Bbbk[x_0,x_1,\ldots,x_r]$ and homogeneous coordinate ring $A_Z := S/I_Z$, we write
$$\beta_{i,j} =  \beta_{i,j}(Z) := \mathrm{dim}_\Bbbk
\big(\mathrm{Tor}^S_i(\Bbbk,A_Z)_{i+j}\big) \mbox{ for all } i \in \mathbb{N}_0 \mbox{ and all } j \in\mathbb{Z}.$$
As usually, if $Z$ is non-degenerate, we list this numbers only the range $1\leq i \leq r+1-\mathrm{depth}(Z)$ and $1\leq j < \mathrm{reg}(Z)$
\end{convention and remark}

\begin{theorem} \label{t1-betti}
In the previous notation we have the following Betti diagram  of $X$
\[
\begin{tabular}{|c|c|c|c|c|c|c|}\hline
                          \multicolumn{2}{|c||}{$i$}& $1$&$2$&$3$&$4$&$5$  \\\cline{1-7}
             \multicolumn{2}{|c||}{$\beta_{i,1}$}& $3$&$2$&$0$&$0$&$0$ \\\cline{1-1}
             \multicolumn{2}{|c||}{$\beta_{i,2}$}& $0$& $0$&$0$& $0$&$0$  \\\cline{1-1}
             \multicolumn{2}{|c||}{$\vdots$}& $\vdots$&$\vdots$&$\vdots$&$\vdots$&$\vdots$\\\cline{1-1}
             \multicolumn{2}{|c||}{$\beta_{i,d-4}$}& $0$&$0$&$0$&$0$&$0$\\\cline{1-1}
             \multicolumn{2}{|c||}{$\beta_{i,d-3}$}& $\beta_{1,d-3}$&$\beta_{2,d-3}$&$\beta_{3,d-3}$&$\beta_{4,d-3}$&$\beta_{5,d-3}$\\\cline{1-7}
             \end{tabular}
\]
with the following entries
\begin{gather*}
 \beta_{1,d-3}={{d-1}\choose{2}},\quad \beta_{2,d-3}=2(d-1)(d-3),\quad\beta_{3,d-3}=3(d^2-5d+5) \\
 \beta_{4,d-3}=2(d-2)(d-4)\quad \mbox{and} \quad \beta_{5,d-3}={{d-3}\choose{2}}.
\end{gather*}
\end{theorem}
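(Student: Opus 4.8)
The plan is to compute the entire minimal free resolution of the homogeneous coordinate ring $A_X = S/I_X$ by exploiting the fact that $X$ sits as a divisor $X \approx H + (d-3)F$ on the smooth scroll $W = S(1,1,1) \subset \mathbb{P}^5$, whose homogeneous coordinate ring $A_W$ is very well understood. First I would recall that $W$ is a variety of minimal degree ($\deg W = 3 = \codim W + 1$ in $\mathbb{P}^5$), so $A_W$ is arithmetically Cohen--Macaulay with a linear (Eagon--Northcott type) resolution; its Betti table is the pure linear strand that accounts for the entries $\beta_{1,1} = 3$, $\beta_{2,1} = 2$ displayed in the first row of the diagram. Since $X \subset W$, the ideal $I_X$ contains $I_W$, and the three linear forms' worth of quadrics defining $W$ together with $\binom{d-1}{2} - 3$ further generators in degree $d-2$ will be visible once we understand the quotient $I_X / I_W$.

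The key technical step is to realize $I_X/I_W$, equivalently $\mathcal{I}_{X/W}$, as a twist of a well-understood sheaf on $W$. Concretely, $\mathcal{O}_W(-X) \cong \mathcal{O}_W(-H - (d-3)F)$, and one has a short exact sequence of $A_W$-modules relating $A_X$ to $A_W$ and to the module associated with this divisor class. The plan is to use the structure of $\operatorname{Pic}(W)$ and the projection $W \to \mathbb{P}^1$ to compute all the cohomology $H^i(W, \mathcal{O}_W(-X)(j))$, hence (via the change-of-rings / local duality machinery on the Cohen--Macaulay ring $A_W$) to read off the graded Betti numbers $\beta_{i,j}(I_X/I_W)$ as an $S$-module. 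Because $X$ is smooth and arithmetically Cohen--Macaulay of regularity $d-2$ (by Convention and Remark~\ref{convention and remark}), its resolution has length $5 = \codim X$ and occupies exactly two diagonals $j=1$ and $j=d-3$, which is what forces the displayed two-row shape and lets us use a mapping-cone argument for the short exact sequence $0 \to I_W \to I_X \to I_X/I_W \to 0$.

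Having the shape pinned down, the five numbers in the bottom row are then determined by a short list of numerical constraints rather than by computing each $\beta_{i,d-3}$ from scratch: additivity of the graded Hilbert series forces the alternating sum $\sum_i (-1)^i \beta_{i,d-3}$, the Cohen--Macaulay property gives self-duality of the resolution (so $\beta_{i,d-3} = \beta_{6-i,d-3}$, visibly satisfied by the claimed values $\binom{d-3}{2}, 2(d-2)(d-4), 3(d^2-5d+5), 2(d-1)(d-3), \binom{d-1}{2}$ read in reverse), and the known Hilbert polynomial of $X$ (degree $d$, regularity $d-2$) fixes the remaining degrees of freedom. I would therefore compute the Hilbert function of $A_X$ explicitly from the divisor-class data, compare it against the Hilbert series contribution of the linear strand of $A_W$, and solve for the length-$(d-3)$ strand.

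The main obstacle I expect is the bookkeeping in the change-of-rings spectral sequence (or mapping cone) that passes from Betti numbers over $A_W$ to Betti numbers over $S$: one must check that no cancellation occurs between the two strands and that the linear strand is genuinely confined to $\beta_{1,1}=3,\ \beta_{2,1}=2$ with no higher linear syzygies leaking into homological degrees $3,4,5$. This is exactly the point where the smoothness of $W$ and the precise divisor class $H + (d-3)F$ (rather than a general element of $\operatorname{Pic} W$) are used, and verifying the vanishing of the relevant connecting maps is the delicate part; once that vanishing is established, the five explicit values follow from duality plus the Hilbert-series computation with no further surprises.
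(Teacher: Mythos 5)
Your skeleton --- the exact sequence $0 \to I_X/I_W \to A_W \to A_X \to 0$, the identification $[I_X/I_W]_n \cong H^0\big(W,\mathcal{O}_W((n-1)H-(d-3)F)\big)$ computed via the ruling of $W$, and a final numerical solve against the Hilbert series --- is exactly the paper's route; the one place where the paper does something you don't is the two-row shape of the diagram, which it gets by citing \cite[Remark 4.8 (2)]{P} rather than by the mapping-cone/no-cancellation argument that you correctly flag as your delicate unverified step. However, two of the facts you lean on are false, and they are load-bearing. First, $X$ is \emph{not} arithmetically Cohen--Macaulay: it has codimension $3$ in $\mathbb{P}^5$ (not $5$), and the resolution you are trying to establish has length $5$, so by Auslander--Buchsbaum $\depth(A_X)=1$; equivalently, Theorem~\ref{t1-coh}(a) (whose proof is independent of the Betti table) gives $h^1(\mathbb{P}^5,\mathcal{I}_X(j))=\binom{j+1}{2}(d-j-3)\neq 0$ for $1\le j\le d-4$. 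Consequently there is no self-duality of the resolution, and the asserted symmetry $\beta_{i,d-3}=\beta_{6-i,d-3}$ is visibly violated by the very values you are trying to prove: $\beta_{1,d-3}=\binom{d-1}{2}\neq\binom{d-3}{2}=\beta_{5,d-3}$. So the constraint you planned to use to pin down the five unknowns in the bottom row is simply not available, and ACM-ness cannot be fed into your shape argument either.

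The good news is that you do not need any of this. You underestimate what Hilbert-series additivity gives: writing $H(A_X,t)=K(t)/(1-t)^6$ with $K(t)=\sum_{i,j}(-1)^i\beta_{i,j}\,t^{i+j}$, the five unknowns $\beta_{i,d-3}$ sit in the five \emph{distinct} degrees $d-2,d-1,d,d+1,d+2$ of $K(t)$, so once the two-row shape is known, comparing coefficients of your two expressions for $H(A_X,t)$ determines each $\beta_{i,d-3}$ individually --- not merely their alternating sum. This is precisely how the paper concludes. To repair your proposal: delete the ACM/duality claims, keep the Hilbert-series computation (which matches the paper's), and supply the shape statement either by the citation to \cite{P} or by actually carrying out the mapping-cone verification you deferred; after that, coefficient comparison finishes the proof with no further input.
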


\begin{proof} As in Convention and Remark~\ref{convention and remark} (A), we put $W = S(1,1,1)$, and denote the coordinate rings of
$X$ and $W$ by $A_X$ and $A_W$ respectively. Then there is a short
exact sequence
\[
0 \to I_X/I_W \to A_W \to A_X \to 0
\]
of graded $S$-modules,
where $I_W$ denotes the defining ideal of $W \subset \mathbb{P}^5$. In a
first step we compute the Hilbert series
\[
H(I_X/I_W,t) = \sum_{n \in \mathbb{Z}}\dim_{\Bbbk}[I_X/I_W]_n \cdot t^n
\]
of $I_X/I_W$. By applying sheaf cohomology to the corresponding short
exact sequence $0 \to \mathcal{I}_X/\mathcal{I}_W \to \mathcal{O}_W
\to \mathcal{O}_X \to 0$ we obtain an isomorphism
\[
[I_X/I_W]_n \cong H^0(W,\mathcal{O}_W(-X +nH))
\cong H^0(W,\mathcal{O}_W((n-1)H - (d-3)F)).
\]
Therefore, it follows that
\[
\dim_{\Bbbk}[I_X/I_W]_n = \begin{cases}
\binom{n+1}{2} (n-d+3)  & \text{ if } n \geq d-2 \\
\quad \quad  0 & \text{ if } n < d-2
\end{cases}
\]
As an application to the Hilbert series $H(I_X/I_W,t)$ it turns out that
\[
H(I_X/I_W, t) = \sum_{n \geq d-2}\binom{n+1}{2}(n-d+3) t^n =
\frac{t^{d-2}}{(1-t)^4} \big( \binom{d-1}{2} -(d-1)(d-4)t +
\binom{d-3}{2}t^2 \big).
\]
The formula for the expression of the generating function as a rational
function might be proven directly or by some Computer Algebra System.
The Hilbert series of $A_Y$ is given by $H(A_W,t) = (1+2t)/(1-t)^4$. By the
above short exact sequence of graded modules it follows that
\[
H(A_X,t) = 
\frac{1}{(1-t)^4} \big( 1+2t - \binom{d-1}{2}t^{d-2}
+(d-1)(d-4)t^{d-1} - \binom{d-3}{2}t^d \big).
\]
As a consequence of \cite[Remark 4.8 (2)]{P} the
Betti diagram of $X$ now must have the shape as indicated in the statement. In
particular the first row has the stated form. For the sake of simplicity, we
put $\beta_{i,d-3} = \beta_i, i = 1,\ldots,5$. Then by the additivity of
the Hilbert series on short exact sequences of graded $S$-modules the Betti
diagram implies the following form of the Hilbert series $H(A_X,t)$
\[
H(A_X,t) = \frac{1}{(1-t)^6} \big(
1-3t^2-\beta_1t^{d-2}+2t^3+\beta_2t^{d-1} -\beta_3 t^d+
\beta_4t^{d+1}-\beta_5 t^{d+2} \big).
\]
By comparing both expressions for our Hilbert series we obtain
the desired values for the remaining Betti numbers.
\end{proof}

\noindent

\subsection*{Cohomological properties} We now provide a result which
summarizes some cohomological properties of surfaces of type I. It is worth noticing that in this case the values for
the sheaf cohomology of $\mathcal{O}_X$ and $\mathcal{I}_X$, and hence also the \textit{index of normality}
$$N(X) := \sup\{j\in \mathbb{N} \mid h^1(\mathbb{P}^r,\mathcal{I}_X(j)) \neq 0 \}\quad ( \in \mathbb{N} \cup \{-\infty \})$$
of $X$ are completely determined.

\begin{theorem} \label{t1-coh}
With the previous notation there are the following
equalities for the cohomology:
\begin{itemize}
\item[(a)] $h^1(\mathbb{P}^5, \mathcal{I}_X(j)) = \binom{j+1}{2} (d-j-3)$
for $1 \leq j \leq d-4$ and zero else.
\item[(b)] $h^0(X,\mathcal{O}_X(j))  = 1/2(j+1)(dj+2)$ for all $j \geq 0$ and zero else.
\item[(c)] $h^1(X,\mathcal{O}_X(j))  = 0$ for all $j \in \mathbb{Z}$.
\item[(d)] $h^2(X, \mathcal{O}_X(-j)) = 1/2(j-1)(dj-2)$ for all $j \geq 2$ and zero else.
\end{itemize}
\end{theorem}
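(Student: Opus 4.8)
The plan is to reduce all four statements to the cohomology of line bundles on the smooth scroll $W = S(1,1,1)$, realized as the $\mathbb{P}^2$-bundle $p \colon W = \mathbb{P}(\mathcal{E}) \to \mathbb{P}^1$ with $\mathcal{E} = \mathcal{O}_{\mathbb{P}^1}(1)^{\oplus 3}$, tautological divisor $H$ and ruling $F = p^*\mathcal{O}_{\mathbb{P}^1}(1)$. First I would tabulate $H^i(W, \mathcal{O}_W(aH + bF))$ once and for all. For $a \geq 0$ one has $R^{>0}p_* = 0$ and $p_*\mathcal{O}_W(aH+bF) = \mathcal{O}_{\mathbb{P}^1}(a+b)^{\oplus \binom{a+2}{2}}$, so $h^0 = \binom{a+2}{2}(a+b+1)$ when $a+b \geq 0$, $h^1 = \binom{a+2}{2}(-a-b-1)$ when $a+b \leq -2$, and $h^2 = h^3 = 0$; for $a \in \{-1,-2\}$ all cohomology vanishes since $H^i(\mathbb{P}^2, \mathcal{O}(a)) = 0$; and for $a \leq -3$ only $h^2, h^3$ survive, computed by Serre duality from $\omega_W = \mathcal{O}_W(-3H + F)$.

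With this table in hand, part (a) is the quickest. I would use the sheaf sequence $0 \to \mathcal{I}_W \to \mathcal{I}_X \to \mathcal{I}_X/\mathcal{I}_W \to 0$ on $\mathbb{P}^5$; since $W$ is arithmetically Cohen-Macaulay, $h^1(\mathcal{I}_W(j)) = h^2(\mathcal{I}_W(j)) = 0$, and the identification $(\mathcal{I}_X/\mathcal{I}_W)(j) \cong \mathcal{O}_W((j-1)H - (d-3)F)$ already exploited in the proof of Theorem~\ref{t1-betti} yields $h^1(\mathbb{P}^5, \mathcal{I}_X(j)) = h^1(W, \mathcal{O}_W((j-1)H - (d-3)F))$; the table turns this into $\binom{j+1}{2}(d-j-3)$ for $1 \leq j \leq d-4$ and $0$ otherwise. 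For parts (b) and (c) I would twist the divisor sequence $0 \to \mathcal{O}_W((n-1)H - (d-3)F) \to \mathcal{O}_W(nH) \to \mathcal{O}_X(n) \to 0$ and read off the long exact cohomology sequence. For $n \geq 1$ both outer sheaves have $a \geq 0$, so the relevant $h^1$ and all $h^{\geq 2}$ vanish, giving at once $h^1(\mathcal{O}_X(n)) = 0$ and $h^0(\mathcal{O}_X(n)) = \tfrac{(n+2)(n+1)^2}{2} + \binom{n+1}{2}(d-n-3) = \tfrac{(n+1)(nd+2)}{2}$.

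It remains to treat the negative twists and part (d). For $n \leq 0$ the scroll summands have $a < 0$, and the only delicate point of the whole argument is to feed the negative-$a$ rows of the table — the intermediate vanishing for $a \in \{-1,-2\}$ and Serre duality for $a \leq -3$ — into the same long exact sequence, checking the boundary values $n = -1, -2$ individually, to conclude $h^0(\mathcal{O}_X(n)) = 0$ for $n < 0$ and $h^1(\mathcal{O}_X(n)) = 0$ for every $n$. This completes (b) and (c). Part (d) then follows formally: since $h^0(\mathcal{O}_X(n)) = \tfrac{(n+1)(nd+2)}{2}$ and $h^1 = h^2 = 0$ for all $n \gg 0$, the Hilbert polynomial is identically $\chi(\mathcal{O}_X(n)) = \tfrac{(n+1)(nd+2)}{2}$; hence for $n \leq -2$, where $h^0 = h^1 = 0$, we get $h^2(\mathcal{O}_X(n)) = \chi(\mathcal{O}_X(n)) = \tfrac{(n+1)(nd+2)}{2}$, which is $\tfrac{(j-1)(dj-2)}{2}$ with $j = -n$, while for $n \geq -1$ the vanishing of $h^2(\mathcal{O}_W(nH))$ and of $h^3$ of the kernel forces $h^2(\mathcal{O}_X(n)) = 0$. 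The expected obstacle is thus not conceptual but bookkeeping: getting the Serre-duality twists on $W$ exactly right and not overlooking the intermediate-range vanishing when $a = -1, -2$.
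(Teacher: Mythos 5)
Your proof of part (a) is essentially the paper's own: both use the sequence $0 \to \mathcal{I}_W \to \mathcal{I}_X \to \mathcal{O}_W(-X) \to 0$, the arithmetic Cohen--Macaulayness of $W$ to kill $h^1$ and $h^2$ of $\mathcal{I}_W$, and the pushforward along $W \to \mathbb{P}^1$ to reduce to $h^1\big(\mathbb{P}^1,\mathcal{O}_{\mathbb{P}^1}(j+2-d)\big)$. For (b)--(d), however, you take a genuinely different route. The paper deduces from (a) that $h^1(\mathbb{P}^5,\mathcal{I}_X(1)) = d-4$, so that the linearly normal embedding of $X$ realizes it as an isomorphic projection of a smooth rational normal surface scroll $\widetilde{X} \subset \mathbb{P}^{d+1}$; hence $H^i(X,\mathcal{O}_X(j)) \cong H^i(\widetilde{X},\mathcal{O}_{\widetilde{X}}(j))$ for all $i,j$, and (b), (c), (d) are read off from the arithmetic Cohen--Macaulayness and the standard cohomology of the surface scroll, with (d) obtained by duality (``interchanging $j$ and $-j$''). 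You instead never leave the $3$-fold $W$: you twist the divisor sequence $0 \to \mathcal{O}_W(nH-X) \to \mathcal{O}_W(nH) \to \mathcal{O}_X(n) \to 0$ and feed in a full table of $H^i\big(W,\mathcal{O}_W(aH+bF)\big)$ computed from $p_*$ and $\omega_W = \mathcal{O}_W(-3H+F)$, then get (d) from the Hilbert polynomial once $h^0$ and $h^1$ are settled. Your argument is correct, including the delicate negative twists: for $n \le -2$ the obstruction term $H^2\big(\mathcal{O}_W((n-1)H-(d-3)F)\big)$ is Serre-dual to $H^1\big(\mathcal{O}_W((-n-2)H+(d-2)F)\big) = 0$, and for $n \ge -1$ the term $H^3$ of the kernel is dual to $H^0\big(\mathcal{O}_W((-n-2)H+(d-2)F)\big) = 0$, so the boundary cases $n=-1,-2$ do come out as you claim. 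What the two approaches buy: yours is self-contained, needing neither the classification of varieties of minimal degree nor the projection description of $X$, at the cost of more bookkeeping on $W$; the paper's is shorter because it piggybacks on the structure theory already developed (Theorem~\ref{theorem 2.5}, Corollary~\ref{corollary 2.7}) and makes explicit the projecting scroll $\widetilde{X} \subset \mathbb{P}^{d+1}$, an object that is used repeatedly in the rest of the paper.
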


\begin{proof} We start with the proof of (a), using the notations introduced in Convention and Remark~\ref{convention and remark}. Clearly
$H^1(\mathbb{P}^5, \mathcal{I}_X(j)) = 0$ for all $j \leq 0$. So let
$j \geq 1$.  Then we use the short
exact sequence
\[
0 \to \mathcal{I}_W \to \mathcal{I}_X \to \mathcal{O}_W(-X) \to 0.
\]
Since $W$ is arithmetically Cohen-Macaulay, this sequence yields that $H^i(\mathbb{P}^5, \mathcal{I}_W(j)) = 0$ for
$i = 1,2$ and all $j \in \mathbb{Z}$. Therefore the long exact cohomology
sequence induces isomorphisms $H^1(\mathbb{P}^5,\mathcal{I}_X(j)) \cong
H^1(W,\mathcal{O}_W(jH -X))$ for all $j \in \mathbb{Z}$. Because of
$X \approx H + (d-3)F$ it follows that
\[
h^1(\mathbb{P}^5, \mathcal{I}_X(j)) = h^1(W,\mathcal{O}_W((j-1)H- (d-3)F)
= \binom{j+1}{2} h^1(\mathbb{P}^1, \mathcal{O}_{\mathbb{P}^1}(j+2-d)).
\]
By duality we get $h^1(\mathbb{P}^1, \mathcal{O}_{\mathbb{P}^1}(j+2-d)) =
h^0(\mathbb{P}^1, \mathcal{O}_{\mathbb{P}^1}(d-j-4))$. This proves
statement (a).\\
Because of $h^1(\mathbb{P}^5, \mathcal{I}_X(1)) = d - 4$ (as shown
in (a)) and because of $\deg(X) = d$, the linearly normal embedding
of $X$ implies that $X \subset \mathbb{P}^5$ is isomorphic to the
linear projection of a smooth rational normal surface scroll
$\tilde{X} \subset \mathbb{P}^{d+1}$. As a consequence we have
$H^i(X,\mathcal{O}_X(j)) \cong
H^i(\tilde{X},\mathcal{O}_{\tilde{X}}(j))$ for all $i,j \in
\mathbb{Z}$. Since $\tilde{X}$ is arithmetically Cohen-Macaulay this
yields
statement in (c). \\
The Hilbert function
\[
j \mapsto h_{A_{\tilde{X}}}(j) := \dim_{\Bbbk}
[A_{\tilde{X}}]_j = h^0(\tilde{X},\mathcal{O}_{\tilde{X}}(j))
\]
of the coordinate ring $A_{\tilde{X}}$
is given by $1/2(j+1)(dj+2)$, and this proves statement (b). \\
By interchanging $j$ and $-j$ this provides also the proof of the statement in (d).
\end{proof}

\subsection*{The extremal secant locus} Now, we consider the (special) secant locus of a surface of type I. We first give the following auxiliary result, which shall be of use for us again later.

\begin{lemma}\label{4.12'' Lemma}
Let $s > 1$, let $\mathcal{C} \subset \mathbb{P}^s$ be a closed subscheme of dimension $1$ and degree $d$ and let $\mathbb{H} = \mathbb{P}^{s-1} \subset \mathbb{P}^s$ be a hyperplane. Then
$$\mathrm{length}(\mathcal{C} \cap \mathbb{H}) \geq d \mbox{  with equality if and only if } {\rm Ass}_{\mathcal{C}}(\mathcal{O}_\mathcal{C}) \cap \mathbb{H} = \emptyset.$$
\end{lemma}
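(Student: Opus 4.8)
The plan is to realize $\mathcal{C} \cap \mathbb{H}$ as the cokernel of multiplication by a defining linear form and to read off its length via Euler characteristics. Let $h \in S_1$ be a linear form with $\mathbb{H} = \Proj(S/hS)$ and regard multiplication by $h$ as a morphism of coherent sheaves $\cdot h \colon \mathcal{O}_\mathcal{C}(-1) \to \mathcal{O}_\mathcal{C}$. Its cokernel is $\mathcal{O}_\mathcal{C}/h\mathcal{O}_\mathcal{C} = \mathcal{O}_{\mathcal{C}\cap\mathbb{H}}$, so that $\Coker(\cdot h) = \mathcal{O}_{\mathcal{C}\cap\mathbb{H}}$, while I write $\mathcal{K} := \Ker(\cdot h)$. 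This produces a four-term exact sequence, which I would split into the two short exact sequences $0 \to \mathcal{K} \to \mathcal{O}_\mathcal{C}(-1) \to \mathcal{I} \to 0$ and $0 \to \mathcal{I} \to \mathcal{O}_\mathcal{C} \to \mathcal{O}_{\mathcal{C}\cap\mathbb{H}} \to 0$, where $\mathcal{I} = \mathrm{im}(\cdot h)$.

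First I would identify $\mathcal{K}$ locally: on an affine chart with coordinate ring $R$, the sheaf $\mathcal{K}$ is the annihilator $(0 :_R \bar h)$, which is nonzero exactly when $\bar h$ is a zerodivisor, i.e. when $\bar h$ lies in some associated prime of $R$. Globalizing, $\mathcal{K} = 0$ if and only if $h$ lies in no associated prime of $\mathcal{O}_\mathcal{C}$, that is, if and only if $\Ass_\mathcal{C}(\mathcal{O}_\mathcal{C}) \cap \mathbb{H} = \emptyset$. The same description shows that $\Supp(\mathcal{K})$ contains the generic point of a one-dimensional component $Z$ of $\mathcal{C}$ only when $h$ vanishes on $Z$, i.e. only when $Z \subseteq \mathbb{H}$; hence, as long as no component of $\mathcal{C}$ lies in $\mathbb{H}$ (equivalently, as long as $\mathcal{C}\cap\mathbb{H}$ is zero-dimensional), both $\mathcal{K}$ and $\mathcal{O}_{\mathcal{C}\cap\mathbb{H}}$ are supported in dimension $0$.

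In this proper case the higher cohomology of these zero-dimensional sheaves vanishes, so additivity of the Euler characteristic over the two short exact sequences gives $\length(\mathcal{C}\cap\mathbb{H}) = \chi(\mathcal{O}_{\mathcal{C}\cap\mathbb{H}}) = \big[\chi(\mathcal{O}_\mathcal{C}) - \chi(\mathcal{O}_\mathcal{C}(-1))\big] + \chi(\mathcal{K})$. Writing the Hilbert polynomial of the one-dimensional scheme $\mathcal{C}$ as $P(n) = \chi(\mathcal{O}_\mathcal{C}(n)) = dn + e$, the bracketed first difference $P(0) - P(-1)$ equals precisely $d$, while $\chi(\mathcal{K}) = \length(\mathcal{K}) \geq 0$ since $\mathcal{K}$ is zero-dimensional. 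Thus $\length(\mathcal{C}\cap\mathbb{H}) = d + \length(\mathcal{K}) \geq d$, with equality if and only if $\mathcal{K} = 0$, i.e. if and only if $\Ass_\mathcal{C}(\mathcal{O}_\mathcal{C}) \cap \mathbb{H} = \emptyset$.

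It remains to treat the improper case, which I expect to be the only real bookkeeping obstacle. If some one-dimensional component $Z \subseteq \mathcal{C}$ is contained in $\mathbb{H}$, then $\mathcal{C}\cap\mathbb{H} \supseteq Z$ is one-dimensional, so $\length(\mathcal{C}\cap\mathbb{H}) = \infty > d$; and the generic point of $Z$, being a minimal and hence associated point of $\mathcal{O}_\mathcal{C}$, lies on $\mathbb{H}$, so $\Ass_\mathcal{C}(\mathcal{O}_\mathcal{C}) \cap \mathbb{H} \neq \emptyset$. This is consistent with strict inequality, and so in every case equality holds precisely when $\Ass_\mathcal{C}(\mathcal{O}_\mathcal{C}) \cap \mathbb{H} = \emptyset$. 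The delicate points to get right are the identification of $\mathcal{K}$ with the annihilator locus together with its vanishing criterion through associated primes, and the verification that the first difference of the Hilbert polynomial recovers the degree $d$.
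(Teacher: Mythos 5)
Your proof is correct and is essentially the sheaf-theoretic translation of the paper's argument: the paper works with the homogeneous coordinate ring $R$ of $\mathcal{C}$, uses the same two exact sequences $0 \rightarrow (0:_R f)(-1) \rightarrow R(-1) \rightarrow fR \rightarrow 0$ and $0 \rightarrow fR \rightarrow R \rightarrow R/fR \rightarrow 0$, deduces the identity $H_{R/fR}(t) = d + H_{(0:_R f)}(t-1)$ of Hilbert polynomials, and concludes that equality holds if and only if $f$ avoids every associated prime of $R$ other than $R_+$ --- which is exactly your criterion $\mathcal{K} = 0$. The only structural difference is that the Hilbert-polynomial identity holds uniformly, so the paper needs no case distinction between proper and improper intersection, whereas your Euler-characteristic bookkeeping forces the (correctly handled) split according to whether a one-dimensional component of $\mathcal{C}$ lies in $\mathbb{H}$.
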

\begin{proof} Let $R = \Bbbk\oplus R_1\oplus R_2\oplus \ldots = \Bbbk[R_1]$ be the homogeneous coordinate ring of $\mathcal{C}$ and let $f \in R_1$ be such that $\mathcal{C}\cap \mathbb{H} = \Proj(R/fR)$. Let $H_R(t) = dt + c$ be the Hilbert polynomial of $R$. Then, the two exact sequences
$$0\rightarrow fR \rightarrow R \rightarrow R/fR\rightarrow 0 \mbox{ and } 0 \rightarrow(0:_R f)(-1) \rightarrow R(-1) \rightarrow fR \rightarrow 0$$
yield that the Hilbert polynomial of $R/fR$ is given by
$$H_{R/fR}(t) = d + H_{(0:_R f)}(t-1).$$
Observe that the polynomial $H_{(0:_R f)}(t-1)$ vanishes if and only if $(0:_R f)_t = 0$ for all $t \gg 0$, hence if and only if
$$f\notin \bigcup_{\mathfrak{p} \in {\rm Ass}(R)\setminus \{R_+\}} \mathfrak{p}.$$
But this latter condition is equivalent to the requirement that ${\rm Ass}(\mathcal{C}) \cap \mathbb{H} = \emptyset$.
\end{proof}

\begin{proposition}
\label{prop:extseclocI}  Let $X \subset \mathbb{P}^5$ be a surface of maximal sectional regularity of degree $d > 5$ which is of type I. Then, in the notations of Convention and Remark~\ref{convention and remark} we have:
\begin{itemize}
\item[\rm{(a)}] $\mathbb{F}^+(X) = \mathbb{F}(X) = W = S(1,1,1).$
\item[\rm{(b)}] ${}^*\Sigma(X) = \Sigma(X).$
\item[\rm{(c)}] The image $\psi\big(\Sigma(X)\big)$ of $\Sigma(X)$ under the Pl\"ucker embedding
                $\psi: \mathbb{G}(1,\mathbb{P}^5) \rightarrow \mathbb{P}^{14}$ is a Veronese surface in a subspace $\mathbb{P}^5 \subset \mathbb{P}^{14}$.
\end{itemize}
\end{proposition}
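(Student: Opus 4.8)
The plan is the following. Statement (a) requires no new argument: it is precisely Notation and Reminder~\ref{4.2'' Notation and Reminder}~(C)(a). For the remaining two parts I would identify $\Sigma(X)$ completely. I realize $W=S(1,1,1)$ as the Segre threefold $\mathbb{P}^1\times\mathbb{P}^2\subset\mathbb{P}^5$, so that $\operatorname{Pic}(W)=\mathbb{Z}A\oplus\mathbb{Z}B$ with $A=\pi_1^*\mathcal{O}_{\mathbb{P}^1}(1)$ and $B=\pi_2^*\mathcal{O}_{\mathbb{P}^2}(1)$; here the hyperplane class is $H=A+B$ and the ruling plane is $F=A$, so by Convention and Remark~\ref{convention and remark} the divisor is $X\approx H+(d-3)F=(d-2)A+B$. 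The threefold $W$ carries two families of linear subspaces: the ruling planes $F_p=\{p\}\times\mathbb{P}^2$ $(p\in\mathbb{P}^1)$ and the ruling lines $\ell_q=\mathbb{P}^1\times\{q\}$ $(q\in\mathbb{P}^2)$. Restricting $\mathcal{O}_W(X)$ to $\ell_q$ gives $\deg(\mathcal{O}_W(X)|_{\ell_q})=(d-2)(A\cdot\ell_q)+(B\cdot\ell_q)=d-2$, so every $\ell_q$ with $\ell_q\not\subseteq X$ is a proper $(d-2)$-secant line to $X$. The whole point is to prove that these ruling lines are the \emph{only} proper extremal secant lines.

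The main step, which I expect to be the principal obstacle, is the reverse inclusion. Let $\mathbb{L}$ be a proper $(d-2)$-secant, so $\length(X\cap\mathbb{L})=d-2<\infty$. Since $X\subseteq W$ scheme-theoretically, $X\cap\mathbb{L}\subseteq W\cap\mathbb{L}$; if $\mathbb{L}\not\subseteq W$ then $W\cap\mathbb{L}$ would be finite of length at most $\deg W=3$, which is $<d-2$ because $d>5$. Hence $\mathbb{L}\subseteq W$, and I would then classify the lines lying on $W$ by projecting through $\pi_1$: such a line either sits inside some ruling plane $F_p$, or it maps isomorphically onto $\mathbb{P}^1$, in which case $A\cdot\mathbb{L}=1$ forces $B\cdot\mathbb{L}=0$ and therefore $\mathbb{L}=\ell_q$ for some $q$. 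To rule out the first alternative, note that $A|_{F_p}=\mathcal{O}_{F_p}$ and $B|_{F_p}=\mathcal{O}_{\mathbb{P}^2}(1)$, so $X\cap F_p$ is a line $m_p\subset F_p\cong\mathbb{P}^2$; any line $\mathbb{L}\subset F_p$ different from $m_p$ meets $X$ only in the single point $\mathbb{L}\cap m_p$ (length $1<d-2$), while $\mathbb{L}=m_p$ lies on $X$ and is not a proper secant. Thus every proper $(d-2)$-secant is a ruling line $\ell_q$. Since the locus $\{q:\ell_q\subseteq X\}$ is the projectivization of the space of linear relations among the three binary $(d-2)$-forms defining $X$, it is a proper linear subvariety of $\mathbb{P}^2$, so the proper $(d-2)$-secants are dense among the $\ell_q$; taking closures yields $\Sigma(X)=\{\ell_q:q\in\mathbb{P}^2\}$, the image of the closed immersion $\mathbb{P}^2\to\mathbb{G}(1,\mathbb{P}^5)$, $q\mapsto\ell_q$. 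In particular $\Sigma(X)$ is irreducible of dimension $2$.

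For (b), recall ${}^*\Sigma(X)\subseteq\Sigma(X)$ and $\dim{}^*\Sigma(X)=2$ by Proposition~\ref{prop:dimspecextrloc}. Each special extremal secant line $\mathbb{L}_h$ is, by definition, the unique $(d-2)$-secant line to the curve section $\mathcal{C}_h\subseteq X$, whence $\length(X\cap\mathbb{L}_h)\geq\length(\mathcal{C}_h\cap\mathbb{L}_h)=d-2$; as $\mathbb{L}_h$ is a proper secant this makes it a proper $(d-2)$-secant to $X$, and by the previous paragraph $\mathbb{L}_h=\ell_{q(h)}$ for some $q(h)$. Being the closure of the image of the irreducible set $\mathbb{U}(X)$, the locus ${}^*\Sigma(X)$ is irreducible, closed and of dimension $2$ inside the irreducible $2$-dimensional $\Sigma(X)$; hence ${}^*\Sigma(X)=\Sigma(X)$, which is (b).

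Finally (c) is a direct Plücker computation. Writing the Segre coordinates as $z_{ij}=s_it_j$ with $i\in\{0,1\}$, $j\in\{0,1,2\}$, the line $\ell_q$ is spanned by $(t_0,t_1,t_2,0,0,0)$ and $(0,0,0,t_0,t_1,t_2)$. Its $2\times2$ Plücker minors vanish except for the nine ``cross'' minors, which equal the products $t_it_j$ with $i,j\in\{0,1,2\}$. The six minors formed within a single block of three coordinates vanish identically, and the symmetry $t_it_j=t_jt_i$ yields three further linear identifications; these $9$ linear conditions cut out a $\mathbb{P}^5\subset\mathbb{P}^{14}$ containing $\psi(\Sigma(X))$. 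Under this identification the morphism $q=[t_0:t_1:t_2]\mapsto\psi(\ell_q)$ becomes $[t_0^2:t_1^2:t_2^2:t_0t_1:t_0t_2:t_1t_2]$, i.e. the quadratic Veronese embedding $\mathbb{P}^2\hookrightarrow\mathbb{P}^5$. Therefore $\psi(\Sigma(X))$ is a Veronese surface spanning $\mathbb{P}^5\subset\mathbb{P}^{14}$, which proves (c). All of this is routine once the secant lines have been pinned down; the genuine difficulty is concentrated in the length bookkeeping and the classification of lines on $W$ in the second paragraph.
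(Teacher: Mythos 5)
Your proof is correct, and its overall skeleton --- realizing $W$ as the Segre embedding of $\mathbb{P}^1\times\mathbb{P}^2$, showing that every proper extremal secant line is a line section $\ell_q$, and then deducing (b) from $\dim\big({}^*\Sigma(X)\big)=2$ (Proposition~\ref{prop:dimspecextrloc}) inside an irreducible two-dimensional locus --- is the same as the paper's. The genuine differences are in the two key steps. First, to exclude extremal secants lying in a ruling plane $F_p$, the paper argues by contradiction with the classification: it uses smoothness of $X$, Lemma~\ref{4.12'' Lemma} on associated points, and concludes that otherwise $X\cap F_p$ would be a pure curve of degree $d-2$, forcing $X$ to be of type II. You instead compute $\mathcal{O}_W(X)|_{F_p}\cong\mathcal{O}_{\mathbb{P}^2}(1)$ directly, so that $X\cap F_p$ is a line; this is more elementary and self-contained, and it avoids invoking the type I/II dichotomy. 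Second, you prove the inclusion $\Theta\subseteq\Sigma(X)$ unconditionally, by checking that $\mathcal{O}_W(X)|_{\ell_q}$ has degree $d-2$ and that the ruling lines contained in $X$ form a proper linear subvariety of $\mathbb{P}^2$; the paper never proves this inclusion directly, but obtains the equality $\Sigma(X)=\Theta$ only at the very end, from the fact that the closed set $\psi\big({}^*\Sigma(X)\big)$ has dimension $2$ inside the irreducible Veronese surface $\psi(\Theta)$. As a result, your part (c) is independent of Proposition~\ref{prop:dimspecextrloc}, and your explicit Pl\"ucker computation replaces the paper's appeal to ``standard arguments''. One imprecision is worth repairing: to see that a proper extremal secant $\mathbb{L}$ lies on $W$, you bound $\length(W\cap\mathbb{L})$ by $\deg W=3$; but a length bound by the degree is not a citable general principle for proper line sections (such lengths are controlled by regularity, not degree, in general). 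The standard argument --- implicit in the paper's proof and explicit in Construction and Examples~\ref{7.1 Construction and Examples} (B) --- is that $W$, being a variety of minimal degree, is cut out by quadrics, so a line not contained in $W$ meets it in length at most $2<d-2$. With that small repair your argument is complete.
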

\begin{proof} Statement (a) is a restatement of Notation and Remark~\ref{4.2'' Notation and Reminder} (C)(a).\\
(b), (c): We identify $S(1,1,1) = W$ with the image of the Segre embedding $\sigma:\mathbb{P}^1 \times \mathbb{P}^2 \rightarrow \mathbb{P}^5$. Consider the canonical projection
$$\varphi:\mathbb{P}^1 \times \mathbb{P}^2 \twoheadrightarrow \mathbb{P}^1 \mbox{ and its restriction } \varphi\upharpoonright:X \twoheadrightarrow \mathbb{P}^1.$$
Let
$$\Theta := \{\mathbb{P}^1 \times \{q\} \mid q \in \mathbb{P}^2\} \subset \mathbb{G}(1,\mathbb{P}^5)$$
denote the closed subset of all fibers under the canonical projection $\mathbb{P}^1 \times \mathbb{P}^2 \twoheadrightarrow \mathbb{P}^2$,
hence the set of all line sections of $\varphi.$ \\
Fix a closed point $p \in \mathbb{P}^1$. Then, the fiber $\varphi^{-1}(p) = \{p\}\times \mathbb{P}^2 =: \mathbb{P}_p^2$ is a ruling plane of $W$.
As $X$ is smooth (see Corollary~\ref{corollary 2.1'}) and hence locally Cohen-Macaulay,
the fiber $(\varphi\upharpoonright)^{-1}(p) = X \cap \varphi^{-1}(p) = X \cap \mathbb{P}_p^2$ is of pure dimension $1$ and has no closed associated points.
Therefore $\mathrm{length}(X \cap \mathbb{L}) = \deg(X \cap \mathbb{P}^2_p)$ for all lines $\mathbb{L} \subset \mathbb{P}^2_p$
not contained in $X$ (see Lemma \ref{4.12'' Lemma}). Consequently, if $\mathbb{P}^2_p$ would contain a proper extremal secant line to $X$,
the curve $X \cap \mathbb{P}_p^2 \subset \mathbb{P}^2_p$ would be pure and of degree $d-2$, so that $X$ would be of type II.
This contradiction shows, that no proper extremal secant line to $X$ is contained in a ruling plane $\mathbb{P}^2_p$.
Hence each proper secant line to $X$ must be a line section of $W$.\\
As $\Theta \subset \mathbb{G}(1,\mathbb{P}^5)$ is closed, it follows that ${}^*\Sigma(X) \subseteq \Sigma(X) \subseteq \Theta,$ so that finally
$\psi\big({}^*\Sigma(X)\big) \subseteq \psi\big(\Sigma(X)) \subseteq \psi(\Theta).$ Standard arguments on Pl\"ucker embeddings show that $\psi(\Theta)$
is the Veronese surface in some subspace $\mathbb{P}^5 \subset \mathbb{P}^{14}$.
As ${}^*\Sigma(X)$ is of dimension $2$ (see Proposition~\ref{prop:dimspecextrloc}), as $\psi(\Theta)$ is irreducible and as $\psi$ is
a closed embedding, statements (c) and (b) follow.
\end{proof}

\section{Surfaces of Type II}

\subsection*{The cohomological aspect} In this section, we investigate the surfaces of maximal sectional regularity which fall under type II. For the whole section we make the following convention.

\begin{convention and notation}
\label{convention and notation} Let $5 \leq r < d$ and let $X \subset \mathbb{P}^r$ be a surface of degree $d$ and of maximal sectional regularity of type II which is not a cone. Set $Y := X \cup \mathbb{F}$, where $\mathbb{F} = \mathbb{F}(X) = \mathbb{P}^2$ denotes the extremal plane of $X$. Moreover, let $I$ and $L$ respectively denote the homogeneous vanishing ideal of $X$ and of $\mathbb{F}$ in $S = \Bbbk[x_0,x_1,\ldots,x_r]$.
\end{convention and notation}

\begin{theorem}\label{4.14'' Theorem} Let the notations and hypotheses be as in Convention and Notation~\ref{convention and notation}. Then the following statements hold
\begin{itemize}
\item[\rm{(a)}] \begin{itemize}
                \item[\rm{(1)}] $\mathrm{reg}(X) = d-r+3$ and $\mathrm{e}(X) \geq \binom{d-r+2}{2}.$
                \item[\rm{(2)}] $h^1(\mathbb{P}^r, \mathcal{I}_X(j)) = 0$ for all $j\leq 1.$ In particular $X$ is linearly normal.
                \item[\rm{(3)}] $h^2(\mathbb{P}^r,\mathcal{I}_X(j)) \begin{cases} = \e(X)                                                  &\mbox{ for all } j \leq 0;\\
                                                                  = \e(X) + r - d -1                                                       &\mbox{ for } j=1;\\
                                                                  \leq \max\{0, h^2(\mathbb{P}^r,\mathcal{I}_X(j-1)) -1 \}                 &\mbox{ for all } j > 1;\\
                                                                  = 1                                                                      &\mbox{ for } j = d-r;\\
                                                                  = 0                                                                      &\mbox{ for all } j \geq d-r+1.
                                                    \end{cases}$
                \item[\rm{(4)}] $h^3(\mathbb{P}^r, \mathcal{I}_X(j))= \begin{cases} \frac{(j+1)(dj+2)}{2}, &\mbox{ if } j \leq - 2,\\
                                0,                     &\mbox{ if } j \geq -1. \end{cases}$
                \end{itemize}
\item[\rm{(b)}] \begin{itemize}
                \item[\rm{(1)}] $\mathrm{reg}(Y) \leq d-r+3.$
                \item[\rm{(2)}] $H^1_{*}(\mathbb{P}^r, \mathcal{I}_Y) \cong H^1_{*}(\mathbb{P}^r,\mathcal{I}_X).$
                \item[\rm{(3)}] $h^2(\mathbb{P}^r,\mathcal{I}_Y(j)) = h^2(\mathbb{P}^r,\mathcal{I}_X(j)) - \max\{0,\binom{-j+d-r+2}{2}\}$ for all $j \geq 0$. In particular $h^2(\mathbb{P}^r,\mathcal{I}_Y(1)) = \e(X)-\binom{d-r+2}{2}$ and $h^2(\mathbb{P}^r,\mathcal{I}_Y(d-r)) = 0.$
                \item[\rm{(4)}] $h^2(\mathbb{P}^r,\mathcal{I}_Y(j)) \geq h^2(\mathbb{P}^r,\mathcal{I}_Y(j-1))$ for all $j \leq 1$, with equality for $j = 1$.
                \item[\rm{(5)}] $h^2(\mathbb{P}^r,\mathcal{I}_Y(j)) \leq {\rm max}\{0,h^2(\mathbb{P}^r,\mathcal{I}_Y(j-1)) - 1\}$ for all $j > 1$.
                \item[\rm{(6)}] $h^3(\mathbb{P}^r,\mathcal{I}_Y(j)) = 0$ for all $j \geq 0.$
                \item[\rm{(7)}] If $h^2(\mathbb{P}^r, \mathcal{I}_Y) = 0$, then $h^2(\mathbb{P}^r,\mathcal{I}_Y(j)) = 0$ for all $j \in \mathbb{Z}$.
                \end{itemize}
\item[\rm{(c)}] For the pair $\tau(X) := \big(\depth(X),\depth(Y)\big)$ we have
                \begin{itemize}
                \item[\rm{(1)}] $\tau(X) = (2,3)$ if $r+1 \leq d \leq 2r-4$;
                \item[\rm{(2)}] $\tau(X) \in \{(1,1),(2,2),(2,3)\}$ if $2r-3 \leq d \leq 3r-7$;
                \item[\rm{(3)}] $\tau(X) \in \{(1,1),(2,2)\}$ if $3r-6 \leq d$.
                \end{itemize}
\item[\rm{(d)}] $h^0(\mathbb{P}^r,\mathcal{I}_X(2))\geq \binom{r}{2}-d-1$ with equality if and only if $\tau(X) = (2,3).$
\end{itemize}
\end{theorem}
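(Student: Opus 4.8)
The plan is to reduce the count of quadrics through $X$ to a cohomological computation on $Y$, read off an exact formula from parts (a) and (b), and then isolate the arithmetic-depth information.

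First I would show that every quadric vanishing on $X$ already vanishes on the extremal plane $\mathbb{F}$. Such a quadric restricts to a conic on $\mathbb{F}=\mathbb{P}^2$ vanishing on the pure plane curve $X\cap\mathbb{F}$ of degree $d-r+3\ge 4$; since the saturated ideal of that curve in $\Bbbk[x_0,x_1,x_2]$ is generated in degree $d-r+3>2$, the restricted conic is zero. Hence $(I)_2=(I\cap L)_2=(I_Y)_2$, so that $h^0(\mathbb{P}^r,\mathcal{I}_X(2))=\dim_\Bbbk(I)_2=\dim_\Bbbk(I_Y)_2$, and in particular the number of quadrics is a genuine union invariant. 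Next I would compute $\dim_\Bbbk(I)_2=\binom{r+2}{2}-\dim_\Bbbk(S/I)_2$ via the standard identities $\dim_\Bbbk(S/I)_2=h^0(\mathcal{O}_X(2))-h^1(\mathcal{I}_X(2))$ and $h^0(\mathcal{O}_X(2))=\chi(\mathcal{O}_X(2))+h^2(\mathcal{I}_X(2))-h^3(\mathcal{I}_X(2))$, where $h^3(\mathcal{I}_X(2))=0$ by (a)(4). The key input is $\chi(\mathcal{O}_X(2))=\chi(\mathcal{O}_{\widetilde{X}}(2))-\e(X)=3d+3-\e(X)$, since the projecting scroll $\widetilde{X}=S(a,d-a)$ has $h^0(\mathcal{O}_{\widetilde{X}}(2))=\tfrac{(2+1)(2d+2)}{2}$ and the standard normalization changes $\chi$ by the finite length $\e(X)$ of its cokernel sheaf.

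Substituting $\binom{r+2}{2}-(3d+3)=\binom{r}{2}-d-1+(2r-2d-1)$ and then replacing the $X$-invariants by $Y$-invariants through (b)(2) and (b)(3), namely $\e(X)=h^2(\mathcal{I}_Y(1))+\binom{d-r+2}{2}$, $h^2(\mathcal{I}_X(2))=h^2(\mathcal{I}_Y(2))+\binom{d-r}{2}$ and $h^1(\mathcal{I}_X(2))=h^1(\mathcal{I}_Y(2))$, the constants cancel because $\binom{d-r+2}{2}-\binom{d-r}{2}=2d-2r+1$, leaving the identity
\[
h^0(\mathbb{P}^r,\mathcal{I}_X(2))=\binom{r}{2}-d-1+h^2(\mathcal{I}_Y(1))+h^1(\mathcal{I}_Y(2))-h^2(\mathcal{I}_Y(2)).
\]
The inequality then follows at once: $h^1(\mathcal{I}_Y(2))\ge 0$, and (b)(5) gives $h^2(\mathcal{I}_Y(2))\le\max\{0,h^2(\mathcal{I}_Y(1))-1\}\le h^2(\mathcal{I}_Y(1))$, so the correction term is non-negative. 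It vanishes if and only if $h^1(\mathcal{I}_Y(2))=0$ and $h^2(\mathcal{I}_Y(1))=h^2(\mathcal{I}_Y(2))$, and by (b)(5) the latter equality forces $h^2(\mathcal{I}_Y(1))=0$. Since $\tau(X)=(2,3)$ is equivalent to $\depth(Y)=3$ (as $\depth(Y)=3$ kills $H^1_*(\mathcal{I}_Y)$, hence $\depth(X)\ge 2$ by (b)(2), while $\depth(X)=3$ is excluded by $\e(X)>0$), the direction ``$\tau(X)=(2,3)\Rightarrow$ equality'' is immediate, because $\depth(Y)=3$ annihilates all of $H^1_*(\mathcal{I}_Y)$ and $H^2_*(\mathcal{I}_Y)$.

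The hard part will be the converse: I must show that equality, i.e. $h^2(\mathcal{I}_Y(1))=0$, forces $\depth(Y)=3$. From $h^2(\mathcal{I}_Y(1))=0$, part (b)(4) gives $h^2(\mathcal{I}_Y(0))=0$ and then (b)(7) yields $H^2_*(\mathcal{I}_Y)=0$, whence via (b)(3) the values $h^2(\mathcal{I}_X(j))=\binom{d-r+2-j}{2}$ for $0\le j\le d-r+2$ are completely pinned down. I would feed this into the restriction sequence $0\to\mathcal{I}_X(j-1)\to\mathcal{I}_X(j)\to\mathcal{I}_{\mathcal{C}}(j)\to 0$ for a general curve section $\mathcal{C}$, which is a curve of maximal regularity with the extremal deficiency $h^1(\mathcal{I}_{\mathcal{C}}(j))=d-r+2-j$ for $1\le j\le d-r+1$. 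A dimension count shows that the connecting map $H^1(\mathcal{I}_{\mathcal{C}}(j))\to H^2(\mathcal{I}_X(j-1))$ has image of dimension $h^2(\mathcal{I}_X(j-1))-h^2(\mathcal{I}_X(j))=d-r+2-j$, so it is injective; hence the restriction $H^1(\mathcal{I}_X(j))\to H^1(\mathcal{I}_{\mathcal{C}}(j))$ is zero and multiplication by a general linear form is surjective on $H^1_*(\mathcal{I}_X)$ throughout this range. Bootstrapping upward from $h^1(\mathcal{I}_X(1))=0$ (part (a)(2)) and using the vanishing beyond $\reg(X)=d-r+3$, this forces $H^1_*(\mathcal{I}_X)=0$, i.e. $\depth(X)=2$; by (b)(2) then $\depth(Y)\ge 2$, and together with $H^2_*(\mathcal{I}_Y)=0$ we obtain $\depth(Y)=3$, that is $\tau(X)=(2,3)$. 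The delicate point throughout is the exact matching of the extremal curve deficiency with the differences of $h^2(\mathcal{I}_X(j))$, which is precisely what makes the connecting maps injective and drives the bootstrap.
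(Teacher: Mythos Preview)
Your argument for part (d) is correct, but it follows a quite different route from the paper's.

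The paper proves (d) in two lines: linear normality (from (a)(2)) gives $h^0(\mathbb{P}^r,\mathcal{I}_X(2))=h^0(\mathbb{P}^{r-1},\mathcal{I}_{\mathcal{C}_h}(2))$ for a general hyperplane section curve $\mathcal{C}_h$, and then \cite[Proposition~3.6]{BS2} supplies the inequality together with the characterisation of equality by the arithmetic Cohen--Macaulayness of $\mathcal{C}_h\cup\mathbb{L}_h$; the latter is equivalent to $\depth(Y)=3$ via the hyperplane section of $Y$. Everything is thus outsourced to the curve theory already developed in \cite{BS2}.

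Your approach is entirely internal to the present theorem: you derive the exact identity
\[
h^0(\mathbb{P}^r,\mathcal{I}_X(2))=\binom{r}{2}-d-1+h^2(\mathcal{I}_Y(1))+h^1(\mathcal{I}_Y(2))-h^2(\mathcal{I}_Y(2))
\]
from parts (a) and (b), and then argue the equality case directly. The forward direction and the inequality are clean. For the converse you run a bootstrap on $H^1_*(\mathcal{I}_X)$ by matching the extremal deficiencies $h^1(\mathcal{I}_{\mathcal{C}}(j))=d-r+2-j$ of the curve section against the successive differences $h^2(\mathcal{I}_X(j-1))-h^2(\mathcal{I}_X(j))$; this works, though note that the equality $h^1(\mathcal{I}_{\mathcal{C}}(j))=d-r+2-j$ is itself a consequence of maximal regularity (the upper bound from the standard recursive estimate, the lower bound forced by $h^1(\mathcal{I}_{\mathcal{C}}(d-r+1))\neq 0$), which you use without comment. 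A slightly shorter variant of your converse would cut the hyperplane on $Y$ rather than $X$: once $H^2_*(\mathcal{I}_Y)=0$, the restriction sequence for $Y$ shows $H^1_*(\mathcal{I}_{\mathcal{C}_h\cup\mathbb{L}_h})$ is a quotient of $H^1_*(\mathcal{I}_Y)$, and since that module is generated in degree~$2$ (as used in the paper's proof of (b)(5)) while $h^1(\mathcal{I}_Y(2))=0$, one gets $H^1_*(\mathcal{I}_Y)=0$ by Nakayama.

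What each buys: the paper's route is shorter but leans on an external black box; your route is self-contained relative to (a)--(c) and yields the explicit formula above, which is stronger than the bare inequality and could be useful elsewhere. Your opening observation that $(I_X)_2=(I_Y)_2$ is correct but, as written, never enters the computation.
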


As an immediate application, we get the following information on the \textit{cohomology tables}
$$\big(h^i(\mathbb{P}^r,\mathcal{I}_X(j))\big)_{i =1,2,3 \mbox{ and } j \in \mathbb{Z}}, \quad \quad \big(h^i(\mathbb{P}^r,\mathcal{I}_Y(j))\big)_{i =1,2,3 \mbox{ and } j \in \mathbb{Z}}$$
of the sheaves of vanishing ideals  $\mathcal{I}_X, \mathcal{I}_Y \subset \mathcal{O}_{\mathbb{P}°r}$ of $X$ and $Y$.

\begin{corollary}\label{coro:cohomology}
Let $X$ and $Y$ be as above. Then the ideal sheaves $\mathcal{I}_X, \mathcal{I}_Y \subset \mathcal{O}_{\mathbb{P}°r}$ of $X$ and $Y$ have the following cohomology tables:
\begin{equation*}
\begin{tabular}{| c | c | c | c | c | c | c |c | c | c | c | c |c |}
    \hline
    $j$ & $\cdots$ & $-2$
    & $-1$ & $0$ & $1$ & $2$ &$\cdots$ &$\varkappa-1$ &$\varkappa$ &$\varkappa+1$ &$\varkappa+2$ & $\cdots$ \\ \hline
    $h^1(\mathbb{P}^r,\mathcal{I}_X(j))$ &$\cdots$ &$0$ &$0$ &$0$ &$0$ &$\ast$ &$\cdots$ &$\ast$ &$\ast$ &$\ast$ &$0$ & $\cdots$\\ \hline
    $h^2(\mathbb{P}^r,\mathcal{I}_X(j))$ &$\cdots$ &$\e$ &$\e$ &$\e$ &$\e-\varkappa-1$ &$\ast$ &$\cdots$ &$\ast$ &$1$ &$0$ &$0$ & $\cdots$\\ \hline
    $h^3(\mathbb{P}^r,\mathcal{I}_X(j))$ &$\cdots$ &$\ast$ &$0$ &$0$ &$0$ &$0$ &$\cdots$ &$0$ &$0$ &$0$ &$0$ & $\cdots$\\ \hline
    \end{tabular}
\end{equation*}
and
\begin{equation*}
\begin{tabular}{| c | c | c | c | c | c | c |c | c | c | c | c |c |}
    \hline
    $j$ & $\cdots$ & $-2$
    & $-1$ & $0$ & $1$ & $2$ &$\cdots$ &$\varkappa-1$ &$\varkappa$ &$\varkappa+1$ &$\varkappa+2$ & $\cdots$ \\ \hline
    $h^1(\mathbb{P}^r,\mathcal{I}_Y (j))$ &$\cdots$ &$0$ &$0$ &$0$ &$0$ &$\ast$ &$\cdots$ &$\ast$ &$\ast$ &$\ast$ &$0$ & $\cdots$\\ \hline
    $h^2(\mathbb{P}^r,\mathcal{I}_Y (j))$ &$\cdots$ &$\e$ &$\e$ &$\e$ &$\e-\binom{d-r+2}{2}$ &$\ast$ &$\cdots$ &$\ast$ &$0$ &$0$ &$0$ & $\cdots$\\ \hline
    $h^3(\mathbb{P}^r,\mathcal{I}_Y (j))$ &$\cdots$ &$\ast$ &$\ast$ &$0$ &$0$ &$0$ &$\cdots$ &$0$ &$0$ &$0$ &$0$ & $\cdots$\\ \hline
    \end{tabular}
\end{equation*}
where $\varkappa := d-r, \mathrm{e} :=\mathrm{e}(X)$ and $\ast$ stands for non-specified non-negative integers.
\end{corollary}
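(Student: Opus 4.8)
The plan is to obtain both tables by reading off, column by column, the cohomological data already assembled in Theorem~\ref{4.14'' Theorem}, using the regularity bounds and the monotonicity statements recorded there to locate precisely where each row becomes zero. Throughout I keep the abbreviations $\varkappa = d-r$ and $\e = \e(X)$ of the statement.

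For the table of $\mathcal{I}_X$, I would treat the three rows separately. The $h^1$-row is controlled by two vanishing results: Theorem~\ref{4.14'' Theorem}(a)(2) gives $h^1(\mathbb{P}^r,\mathcal{I}_X(j)) = 0$ for all $j\le 1$, while $\reg(X) = d-r+3$ from (a)(1), together with the standard implication that $\reg(Z)=m$ forces $h^i(\mathbb{P}^r,\mathcal{I}_Z(j))=0$ for all $j \ge m-i$, yields the vanishing in the column $j=\varkappa+2$ and beyond; the intermediate columns $2 \le j \le \varkappa+1$ carry the unspecified non-negative entries marked $\ast$. The $h^2$-row is a verbatim transcription of (a)(3), once one rewrites $\e(X)+r-d-1 = \e-\varkappa-1$ for the column $j=1$ and reads off the values $\e$ for $j\le 0$, $1$ for $j=\varkappa$ and $0$ for $j\ge \varkappa+1$. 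The $h^3$-row is exactly (a)(4): zero for $j\ge -1$ and equal to $\tfrac{(j+1)(dj+2)}{2}$, hence in general nonzero (marked $\ast$), for $j\le -2$.

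For the table of $\mathcal{I}_Y$, the $h^1$-row coincides entry-by-entry with that of $\mathcal{I}_X$, since (b)(2) provides the isomorphism $H^1_{*}(\mathbb{P}^r,\mathcal{I}_Y) \cong H^1_{*}(\mathbb{P}^r,\mathcal{I}_X)$. The $h^2$-row I would assemble from (b)(3)--(b)(5): part (b)(3) fixes the column $j=1$ as $\e-\binom{d-r+2}{2}$ and the column $j=\varkappa$ as $0$, and combining the monotonicity of (b)(4) (for $j\le 1$) and (b)(5) (for $j>1$) with the bound $\reg(Y)\le d-r+3$ of (b)(1) forces the vanishing throughout the columns $j\ge \varkappa$ and pins down the stable values for $j \le 0$; the columns $2\le j\le \varkappa-1$ form the $\ast$-region. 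The $h^3$-row follows from (b)(6), which gives the vanishing for all $j\ge 0$, the columns $j\le -1$ being left unspecified. Here the one genuinely new feature relative to the $X$-table is that $Y$ acquires the extra linear component $\mathbb{F}=\mathbb{P}^2$, so that $h^3(\mathbb{P}^r,\mathcal{I}_Y(j)) = h^2(Y,\mathcal{O}_Y(j))$ may be nonzero already at $j=-1$, whereas it vanished there for $X$.

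Since the corollary is an immediate application of the theorem, I do not expect a serious obstacle; the only points requiring care are the two boundary mechanisms that determine where the rows terminate. First, one must translate the regularity data ($\reg(X)=d-r+3$ and $\reg(Y)\le d-r+3$) into the exact columns in which the $h^1$- and $h^2$-rows are forced to vanish. Second, one must invoke the monotonicity inequalities (a)(3), (b)(4) and (b)(5) to guarantee that once a row reaches $0$ (or the value $1$ at $j=\varkappa$ in the $X$-table) it cannot re-increase, so that the displayed terminal zeros are correct and the only genuinely unknown entries are precisely those left as $\ast$. With these two observations in place the entries of both tables are read off directly from Theorem~\ref{4.14'' Theorem}.
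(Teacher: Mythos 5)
Your overall strategy---reading each row off Theorem~\ref{4.14'' Theorem}---is exactly what the paper intends (it gives no separate proof and calls the corollary an immediate application), and five of the six rows are handled correctly: the two $h^1$-rows via (a)(2), (b)(2) and the regularity statements, the $h^2(\mathcal{I}_X)$-row verbatim from (a)(3) after rewriting $\e + r - d - 1 = \e - \varkappa - 1$, and the two $h^3$-rows from (a)(4) and (b)(6), including your correct observation about why the column $j=-1$ is only a $\ast$ in the $Y$-table.

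There is, however, a genuine gap at the columns $j \le 0$ of the $h^2(\mathcal{I}_Y)$-row, precisely where you say that the monotonicity of (b)(4) ``pins down the stable values.'' It does not---and in fact the printed entries $\e$ in those columns cannot be derived from the theorem at all, because they contradict it. Statement (b)(3) is asserted for \emph{all} $j \ge 0$, so at $j=0$ it gives $h^2(\mathbb{P}^r,\mathcal{I}_Y) = h^2(\mathbb{P}^r,\mathcal{I}_X) - \binom{d-r+2}{2} = \e - \binom{d-r+2}{2}$, which is consistent with the equality $h^2(\mathbb{P}^r,\mathcal{I}_Y(1)) = h^2(\mathbb{P}^r,\mathcal{I}_Y(0))$ asserted in (b)(4); since $d > r$ forces $\binom{d-r+2}{2} \ge 3$, this value is strictly smaller than the entry $\e$ shown in the table at $j=0$. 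For $j<0$ the theorem gives even less: (b)(4) only says $h^2(\mathbb{P}^r,\mathcal{I}_Y(j))$ is non-decreasing as $j$ increases to $1$, so those entries are merely bounded above by $\e - \binom{d-r+2}{2}$ (their stable value for $j \ll 0$ is $\e(Y)$, not $\e(X)$), and no constant value is forced. So either the second table's entries at $j \le 0$ are a misprint---they should read $\e-\binom{d-r+2}{2}$ at $j=0$ and unspecified values $\le \e-\binom{d-r+2}{2}$ for $j<0$---or any attempted derivation must fail there. A complete write-up has to confront this discrepancy explicitly rather than assert that monotonicity settles it.
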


\subsection*{An auxiliary result} Before we establish Theorem~\ref{4.14'' Theorem} we prove the following Lemma.

\begin{lemma}
\label{4.12'' Lemma+}
Let the notations and hypotheses be as in Convention and Notation~\ref{convention and notation}. In addition let
$\mathcal{C} := X \cap \mathbb{F}.$ Then the following statements hold.
\begin{itemize}
\item[\rm{(a)}] Each line $\mathbb{L} \subset \mathbb{F}$ which is not contained in $X$, satisfies
$$\mathrm{length}(\mathcal{C}\cap \mathbb{L}) = \mathrm{length}(X\cap\mathbb{L}) = d-r+3.$$
In particular, $\mathcal{C} \subset \mathbb{F}$ is a curve of degree $d-r+2$ and has no closed associated points.
\item[\rm{(b)}] $I_{d-r+3} \setminus I \cap L \neq \emptyset$ and for each $f \in I_{d-r+3} \setminus I \cap L$ it holds $I = (I \cap L, f)$
\end{itemize}
\end{lemma}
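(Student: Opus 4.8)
The plan is to reduce both parts to the geometry of the plane curve $\mathcal{C} = X \cap \mathbb{F}$, which lives in the $\mathbb{P}^2 = \mathbb{F} = \mathbb{F}(X)$ furnished by Corollary~\ref{corollary 2.1'}, and to feed its line sections into the first Lemma~\ref{4.12'' Lemma}. For a line $\mathbb{L} \subset \mathbb{F}$ the equality $X \cap \mathbb{L} = \mathcal{C} \cap \mathbb{L}$ is automatic, so the real content of (a) is the value $d-r+3$ together with the absence of embedded points, while (b) is an ideal-theoretic consequence of $\mathcal{C}$ being a plane curve of that degree.

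For part (a) I would start from Corollary~\ref{corollary 2.1'} (Type II), which exhibits $\mathcal{C}$ as a \emph{pure} curve in $\mathbb{F} \cong \mathbb{P}^2$; purity means $\mathcal{C}$ is equidimensional of dimension $1$, so $\Ass_{\mathcal{C}}(\mathcal{O}_{\mathcal{C}})$ consists only of the generic points of its one-dimensional components and $\mathcal{C}$ has no closed associated points. A line $\mathbb{L} \subset \mathbb{F}$ with $\mathbb{L} \not\subset X$ is not a component of $\mathcal{C}$ and meets none of these associated points, so $\Ass_{\mathcal{C}}(\mathcal{O}_{\mathcal{C}}) \cap \mathbb{L} = \emptyset$, and Lemma~\ref{4.12'' Lemma} (with $s=2$) gives $\length(\mathcal{C} \cap \mathbb{L}) = \deg(\mathcal{C}) = d-r+3$. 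If one prefers not to quote purity, the same facts follow from the secant geometry: by Proposition~\ref{prop:dimspecextrloc} one has $\dim({}^*\Sigma(X)) = 2$, while ${}^*\Sigma(X)$ sits inside the irreducible two-dimensional family of all lines of $\mathbb{F}$, so a general line of $\mathbb{F}$ is a genuine $(d-r+3)$-secant; comparison with $\deg(\mathcal{C})$ through Lemma~\ref{4.12'' Lemma} then fixes $\deg(\mathcal{C})$, and the universal bound $\length(X \cap \mathbb{L}) \leq d-r+3$ for finite secants of a codimension-$(r-2)$ variety excludes embedded points.

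For part (b) I would pass to $\bar{S} := S/L$, the coordinate ring of $\mathbb{F} \cong \mathbb{P}^2$, and to the vanishing ideal $\bar{I} := I_{\mathcal{C}/\mathbb{F}} \subset \bar{S}$ of $\mathcal{C}$; by (a) the curve $\mathcal{C}$ is a plane curve of degree $e := d-r+3$, so $\bar{I}$ is principal and generated in degree $e$, and the restriction map $S \to \bar{S}$ sends $I = I_X$ into $\bar{I}$. The existence of $f$ is the crux. I would note that $\mathbb{F} \not\subseteq X$ (since $\dim X = 2 = \dim \mathbb{F}$ but $\deg X = d > 1$), hence $I \not\subseteq L$; moreover $\reg(X) \leq d-r+3 = e$ by Theorem~\ref{theorem 2.9}(a), so $I$ is generated in degrees $\leq e$. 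Every generator of degree $< e$ restricts into $\bar{I}_{<e} = 0$ and so lies in $L$, whence some generator of degree exactly $e$ must escape $L$; this generator is a valid $f$, establishing $I_{d-r+3} \setminus (I \cap L) \neq \emptyset$ (recall $I \cap L = I_Y$).

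Finally, given any $f \in I_{d-r+3} \setminus (I \cap L)$, its image $\bar{f}$ is a nonzero element of the one-dimensional space $\bar{I}_e$, hence generates $\bar{I} = (\bar{f})$. For homogeneous $g \in I$ I would write $\bar{g} = \bar{h}\,\bar{f}$ in $\bar{S}$, lift $\bar{h}$ to $h \in S$, and observe that $g - hf$ lies in $I$ (because $f \in I$) and restricts to $0$ on $\mathbb{F}$, so $g - hf \in I \cap L$; thus $g \in (I \cap L, f)$, and since the reverse inclusion is obvious, $I = (I \cap L, f)$. The main obstacle is precisely the existence step: securing $f$ in the \emph{exact} degree $d-r+3$ needs both $\mathbb{F} \not\subseteq X$ and the generation bound, and the latter must be drawn from Theorem~\ref{theorem 2.9}(a) and not from Theorem~\ref{4.14'' Theorem}(a)(1), since the present Lemma precedes and is used in the proof of that theorem.
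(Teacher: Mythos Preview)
Your argument is correct, and for part (b) it is essentially the paper's own proof rewritten in the quotient ring $\bar S = S/L$ rather than in $S$: the paper phrases the same computation as $I + L = (L,f)$ and then $I = I\cap(I+L) = I\cap(L,f) = (I\cap L,f)$, which is exactly your lift $g - hf \in I\cap L$.

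For part (a) you take a genuinely shorter route. You invoke the purity and degree of $\mathcal{C} = X\cap\mathbb{F}$ directly from Corollary~\ref{corollary 2.1'} (Type~II), and then a single application of Lemma~\ref{4.12'' Lemma} finishes: any line $\mathbb{L}\subset\mathbb{F}$ with $\mathbb{L}\not\subset X$ misses every associated (= generic) point of $\mathcal{C}$, so the intersection length is the degree. The paper instead \emph{re-derives} both the degree and the purity: it first uses a special extremal secant line $\mathbb{L}_h$ (for $h\in\mathbb{U}(X)$) to read off $\deg(\mathcal{C}) = d-r+3$, then for arbitrary $\mathbb{L}$ combines the lower bound from Lemma~\ref{4.12'' Lemma} with the upper bound $\mathrm{length}(X\cap\mathbb{L}) \le \reg(X) \le d-r+3$ coming from Theorem~\ref{theorem 2.9}(a), and finally concludes purity from the equality case of Lemma~\ref{4.12'' Lemma}. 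Your approach is more economical if one is willing to quote Corollary~\ref{corollary 2.1'}; the paper's approach is more self-contained and makes explicit that the regularity bound alone forces every such line to be an extremal secant. Your care in sourcing the generation bound from Theorem~\ref{theorem 2.9}(a) rather than Theorem~\ref{4.14'' Theorem}(a)(1) is well placed and matches what the paper actually needs.
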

\begin{proof}
(a): First let $h \in \mathbb{U}(X)$. Then $\mathbb{L}_h \subset \mathbb{F}$ and
\begin{equation*}
\mathrm{length}(\mathcal{C}\cap \mathbb{L}_h) = \mathrm{length}(X \cap \mathbb{L}_h) = \mathrm{length}(\mathcal{C}_h \cap \mathbb{L}_h) = d-r+3 .
\end{equation*}
This shows, that $\mathcal{C} \subset \mathbb{F}$ is a closed subscheme of dimension $1$ and degree $d-r+3$. Now, let $\mathbb{L} \subset
\mathbb{F}$ be an arbitrary line which is not contained in $X$. As $\mathcal{C} \subset \mathbb{F}$ is of dimension $1$ and of degree $d-r+3$ we have $\mathrm{length}(X \cap \mathbb{L}) =
\mathrm{length}(C \cap \mathbb{L}) \geq d-r+3$. As $\reg(X) \leq d-r+3$ (see Theorem \ref{theorem 2.9} (a)) we also have $\mathrm{length}(X \cap \mathbb{L}) \leq d-r+3$, so that indeed
$\mathrm{length}(X \cap \mathbb{L}) = d-r+3$. Now, it follows by Lemma \ref{4.12'' Lemma} that $\mathcal{C}$ has no closed associated point.  \\
(b): According to statement (a), there is a homogeneous polynomial $g \in S_{d-r+3} \setminus L$ such that the homogeneous vanishing ideal $(I + L)^{\sat} \subset S$ of
$\mathcal{C}$ in $S$ can be written as $(L,g)$. In particular we have $I_{\leq d-r+2} \subset L$. As $\reg(X) = d-r+3$, the ideal $I \subset S$ is generated by homogeneous
polynomials of degree $\leq d-r+3$. As $g \notin L$ it follows that $I_{d-r+3}$ is not contained in $L$ and hence that $I+L = (L,f)$ for all $f \in I_{d-r+3} \setminus
I \cap L$. Therefore $ I = I\cap(I+L) = I\cap (L,f) = (I\cap L,f)$ for all such $f$.
\end{proof}

\subsection*{Proof of Theorem \ref{4.14'' Theorem}.}
(a)(1): Since $X$ admits $(d-r+3)$-secant lines, we have $\reg (X) \geq d-r+3$. On the other hand, $\reg (X) \leq d-r+3$ by Theorem \ref{theorem 2.9} (a). This proves the stated equality for the regularity. For the moment, we postpone the proof of the stated estimate for the invariant $\mathrm{e}(X)$.  \\
(a)(2): It is obvious that $h^1(\mathbb{P}^r, \mathcal{I}_X(j)) = 0$ for all $j \leq 0$. Assume $h^1(\mathbb{P}^r, \mathcal{I}_X(1))> 0$. Then $X$ is a regular projection of a surface $X' \subset \mathbb{P}^{r+1}$. Note that $X'$ is again a sectionally smooth rational surface and hence $\reg(X') \leq d - r+2$ by Theorem \ref{theorem 2.9}(a). On the other hand, the preimage $\mathcal{C}'$ of $\mathcal{C} = X \cap \mathbb{F}$ under this regular projection is a plane curve of degree $(d-r+3)$, and hence $\reg(X') \geq d - r+3$. This contradiction proves our claim.\\
(a)(4): See Theorem \ref{theorem 2.9}(e)\\
(b)(4) and (b)(6): Let $h \in \mathbb{U}(X)$ and consider the induced exact sequence
\begin{equation*}
0 \rightarrow \mathcal{I}_Y (-1)  \rightarrow \mathcal{I}_Y \rightarrow \mathcal{I}_{\mathcal{C}_h \cup \mathbb{L}_h } \rightarrow 0
\end{equation*}
Keep in mind that $H^1 (\mathbb{P}^r , \mathcal{I}_{\mathcal{C}_h \cup \mathbb{L}_h } (j))=0$ for all $j \leq 1$ and $H^2 (\mathbb{P}^r ,  \mathcal{I}_{\mathcal{C}_h \cup \mathbb{L}_h } (j))=0$ for all $j \geq 1$ by \cite[Proposition 2.7(c),(d)]{BS2}. Both claims now follow easily.\\
(b)(5): Assume again that $h \in \mathbb{U}(X)$ and keep in mind that $S/(I\cap L ,h)^{\rm sat}$ is the homogeneous coordinate ring of $\mathcal{C}_h \cup \mathbb{L}_h$ in $S$. By \cite[Remark 3.2 B)]{BS2} the graded $S$-module
$$H^1(S/(I\cap L,h)^{\rm sat}) = \bigoplus_{j \in\mathbb{Z}}
H^1(\mathbb{P}^r,\mathcal{I}_{\mathcal{C}_h \cup \mathbb{L}_h}(j))$$ is generated by homogeneous elements of degree $2$. Now, the induced exact sequences of local cohomology modules
\begin{equation*}
H^1(S/(I \cap L, h)^{\rm sat}) \longrightarrow H^2(S/I\cap L)(-1)
\stackrel{\times h}{\longrightarrow} H^2(S/I\cap L) \longrightarrow
H^2(S/(I \cap L, h)^{\rm sat})
\end{equation*}
proves claim (b)(5), since the multiplication map $\cdot h$ is an epimorphism in all positive degrees and its kernel is generated by homogeneous elements of degree $2$. \\
(b)(7): This is an immediate consequence of (b)(4) and (b)(5).\\
(b)(2): Keep in mind that
\begin{equation*}
H^i_{*}(\mathbb{P}^r,\mathcal{I}_X) \cong H^i(S/I) \mbox { and } H^i_{*}(\mathbb{P}^r,\mathcal{I}_Y) \cong H^i(S/I\cap L) \mbox{ for } i = 1,2,3.
\end{equation*}
According to Lemma \ref{4.12'' Lemma+}(b) we have an exact sequence
\begin{equation*}
0 \rightarrow (S/L)(-d+r-3) \rightarrow S/I \cap L \rightarrow S/I \rightarrow 0.
\end{equation*}
Claim (b)(2) now follows immediately, since $H^i((S/L)(-d+r-3))$ vanishes for $i=1,2$.\\
(a)(3): For all $j \neq d-r$ the values of $h^2(\mathbb{P}^r,\mathcal{I}_X(j))$ are as requested by statement (a)(2) and by Theorem \ref{theorem 2.9} (d).
Observe that $h^2 (\mathbb{P}^r , \mathcal{I}_X (d-r)) \leq 1$ by Theorem \ref{theorem 2.9} (d). So, it remains to show that $H^2 (\mathbb{P}^r , \mathcal{I}_X (d-r)) \neq 0$. This follows from the exact sequence
\begin{equation*}
H^2(S/I)_{d-r} \longrightarrow H^3(S/L)_{-3} \cong \Bbbk \longrightarrow H^3(S/I\cap L)_{d-r} = 0.
\end{equation*}
(b)(3):The first part of this claim follows immediately by (b)(6) and the exact sequence used in the proof of (b)(2). Now, the second part of (b)(3) comes immediately from (a)(3).\\
(b)(1): The required vanishing conditions
$$h^i (\mathbb{P}^r , \mathcal{I}_Y (d-r+3-i+k))=0 \mbox{ for } i=1,2,3 \mbox{ and all } k \geq 0$$
are obtained respectively by (a)(1), (b)(2), (b)(3) and (b)(6).\\
Finally, the inequality for $\mathrm{e}(X)$ stated in (a)(1), follows from (b)(3) applied with $j=1.$\\
(c): By (a)(3), we know that $\depth(X) \leq 2$. Also (b)(2) implies that, if $\depth(X) =1$ then $\depth(Y) =1$ and, if $\depth(X) =2$ then $\depth(Y) =2$ or $3$. Thus we need only to show (c)(1) and (c)(3). If $d \leq 2r-4= 2(r-1) -2$, then $\mathcal{C}_h \cup \mathbb{L}_h$ is arithmetically Cohen-Macaulay by \cite[Proposition 3.5]{BS2}. Thus we have $\depth(Y) =3$ and hence $\depth(X) =2$. On the other hand, if $d \geq 3r-6$, then $\depth (\mathcal{C}_h \cup \mathbb{L}_h)=1$ by Proposition~\ref{proposition 2.3} and hence $\depth(Y) \leq 2$. Therefore either $\tau(X)=(1,1)$ or $\tau (X) = (2,2)$.\\
(d): Since $X$ is linearly normal we have $h^0 (\mathbb{P}^r , \mathcal{I}_X (2)) = h^0 (\mathbb{P}^{r-1} , \mathcal{I}_{\mathcal{C}_h} (2))$.
Moreover, by \cite[Proposition 3.6]{BS2}, we have
\begin{equation*}
h^0 (\mathbb{P}^{r-1} , \mathcal{I}_{\mathcal{C}_h} (2)) \geq {{r}\choose{2}}-d-1
\end{equation*}
where equality holds if and only if $\mathcal{C}_h \cup \mathbb{L}_h$ is arithmetically Cohen-Macaulay, or equivalently, if and only if $\depth(Y) = 3$. Finally, we know by $(c)$ that $\depth(Y) = 3$ if and only if $\tau(X)=(2,3)$. This completes the proof. \qed

\subsection*{Simplicity of the socle of the second cohomology} As a first application and extension of Theorem~\ref{4.14'' Theorem} we show that (in the previous notation)
the vanishing condition $h^2(\mathbb{P}^r,\mathcal{I}_Y) = 0$ which occurs in statement (b)(7) of that Theorem is equivalent to the simplicity of the socle of the second total
cohomology module $H^2_{*}(\mathbb{P}^r,\mathcal{I}_X) = \bigoplus_{j\in \mathbb{Z}}H^2(\mathbb{P}^r, \mathcal{I}_X(j))$ of $\mathcal{I}_X$. To formulate our result, we recall the following notation.

\begin{notation and reminder}\label{4.16''' Notation and Reminder} Let $T = \bigoplus _{n \in \mathbb{Z}} T_n$ be a graded $S$-module. Then, we denote the
\textit{socle} of $T$ by ${\rm Soc}(T)$, thus:
$${\rm Soc}(T) := (0:_T S_+) \cong \Hom_S(\Bbbk,T) = \Hom_S(S/S_+,T).$$
Keep in mind that the socle of a graded Artinian $S$-module $T$ is a
$\Bbbk$-vector space of finite dimension which vanishes if and only
if $T$ does.
\end{notation and reminder}

\begin{proposition}
\label{proposition:soc.eq}   Let the notations and hypotheses be as in Convention and Notation~\ref{convention and notation}. Then following statements are equivalent:

\begin{itemize}
\item[\rm{(i)}] $\mathrm{e}(X)$ takes its minimally possible value $\binom{d-r+2}{2}.$

\item[\rm{(ii)}] $h^2(\mathbb{P}^r,\mathcal{I}_Y) = 0.$

\item[\rm{(iii)}] $h^2(\mathbb{P}^r,\mathcal{I}_Y(j)) = 0$ for all $j \in \{0,1,2,\ldots,d-r-1\}.$

\item[\rm{(iv)}] $h^2(\mathbb{P}^r,\mathcal{I}_Y(j)) = 0$ for all $j \in \mathbb{Z}.$

\item[\rm{(v)}] $h^2(\mathbb{P}^r,\mathcal{I}_X(j)) = \binom{-j+d-r+2}{2}$ for all
$j \in \{0,1,2,\ldots,d-r\}.$

\item[\rm{(vi)}] $h^2(\mathbb{P}^r,\mathcal{I}_X(j)) = \mathrm{max}\{0, \binom{-j+d-r+2}{2}\}$ for all $j \geq 0.$

\item[\rm{(vii)}] $\mathrm{dim}_{\Bbbk}\big(\mathrm{Soc}(H^2_{*}(\mathbb{P}^r,\mathcal{I}_X))\big) = 1.$
\end{itemize}
\end{proposition}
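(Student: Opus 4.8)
The plan is to dispatch the six numerical conditions (i)--(vi) by cohomological bookkeeping based on Theorem~\ref{4.14'' Theorem}, and then to reduce the socle condition (vii) to the vanishing condition (iv) by means of the exact sequence of Lemma~\ref{4.12'' Lemma+}(b). Throughout I write $M := H^2(S/I) \cong H^2_{*}(\mathbb{P}^r,\mathcal{I}_X)$ and $N := H^2(S/I\cap L) \cong H^2_{*}(\mathbb{P}^r,\mathcal{I}_Y)$, and I set $\varkappa := d-r$ as in Corollary~\ref{coro:cohomology}.

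For the numerical part, Theorem~\ref{4.14'' Theorem}(b)(3) combined with the equality case of (b)(4) gives $h^2(\mathbb{P}^r,\mathcal{I}_Y) = h^2(\mathbb{P}^r,\mathcal{I}_Y(1)) = \e(X) - \binom{\varkappa+2}{2}$, which yields (i)$\Leftrightarrow$(ii) at once. The implication (ii)$\Rightarrow$(iv) is precisely (b)(7), while (iv)$\Rightarrow$(iii)$\Rightarrow$(ii) is trivial, since $0 \in \{0,\ldots,\varkappa-1\}$. Next, (b)(3) reads $h^2(\mathcal{I}_Y(j)) = h^2(\mathcal{I}_X(j)) - \max\{0,\binom{-j+\varkappa+2}{2}\}$ for $j\geq 0$, so (vi) is equivalent to the vanishing of $h^2(\mathcal{I}_Y(j))$ for all $j\geq 0$; propagating this to $j\leq 0$ through the monotonicity (b)(4) shows (vi)$\Leftrightarrow$(iv). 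Finally (v) is the restriction of (vi) to $\{0,\ldots,\varkappa\}$, the range $j>\varkappa$ of (vi) being automatic by (a)(3) together with $\binom{-j+\varkappa+2}{2}=0$; hence (v)$\Leftrightarrow$(vi). This closes the circle among (i)--(vi), and it remains to prove (vii)$\Leftrightarrow$(iv).

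For this last and essential step I would apply the local cohomology functor $H^\bullet(-)=H^\bullet_{S_+}(-)$ to the short exact sequence
$$0 \to (S/L)(-\varkappa-3) \to S/I\cap L \to S/I \to 0$$
of Lemma~\ref{4.12'' Lemma+}(b). Since $S/L$ is the Cohen-Macaulay coordinate ring of the plane $\mathbb{F}=\mathbb{P}^2$, one has $H^i(S/L)=0$ for $i\leq 2$, so the long exact sequence collapses to
$$0 \to N \to M \xrightarrow{\;\delta\;} P \to H^3(S/I\cap L) \to H^3(S/I) \to 0,\qquad P := H^3\big((S/L)(-\varkappa-3)\big).$$
Here $P$ is the shifted top local cohomology of $\Bbbk[z_0,z_1,z_2]$, an indecomposable graded injective with $\dim_\Bbbk P_j = \binom{-j+\varkappa+2}{2}$, nonzero exactly for $j\leq\varkappa$, having a one-dimensional top piece $P_\varkappa\cong\Bbbk$ and a simple socle $\mathrm{Soc}(P)=P_\varkappa$. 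Now all three modules $M,N,P$ are Artinian (finite-dimensional in each degree and bounded above: $M_j=0$ for $j>\varkappa$ by (a)(3), and $N_j=0$ for $j\geq\varkappa$ by (b)(3),(b)(5)), so each is zero precisely when its socle vanishes. Since $\dim_\Bbbk M_\varkappa=1$ by (a)(3) and $M$ is bounded above by $\varkappa$, we have $M_\varkappa\subseteq\mathrm{Soc}(M)$; moreover $H^3(S/I\cap L)_j=h^3(\mathcal{I}_Y(j))=0$ for $j\geq 0$ by (b)(6), so $\delta$ is surjective in all degrees $\geq 0$ and in particular $\delta_\varkappa:M_\varkappa\to P_\varkappa$ is an isomorphism of one-dimensional spaces. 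The map $\delta$ carries $\mathrm{Soc}(M)$ into $\mathrm{Soc}(P)=P_\varkappa$, and by the previous remark this restriction is already surjective; its kernel is $\mathrm{Soc}(M)\cap N=\mathrm{Soc}(N)$ because $N=\ker\delta$ is a submodule of $M$. Thus
$$0 \to \mathrm{Soc}(N) \to \mathrm{Soc}(M) \to \Bbbk \to 0,\qquad \dim_\Bbbk\mathrm{Soc}(M)=\dim_\Bbbk\mathrm{Soc}(N)+1.$$
Consequently (vii) holds if and only if $\mathrm{Soc}(N)=0$, i.e. (by Artinianness) $N=0$, i.e. $h^2(\mathbb{P}^r,\mathcal{I}_Y(j))=0$ for all $j$, which is exactly (iv).

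The main obstacle is this passage to (vii): one must recognize that $M$ and $N$, although not of finite length in the naive sense, are genuinely Artinian (bounded above with finite-dimensional components), so that the principle ``$\mathrm{Soc}=0\Leftrightarrow$ module $=0$'' applies, and one must then pin down that the socle of $P$ is one-dimensional and concentrated in the single degree $\varkappa$ where $M$ also carries its one-dimensional top piece. The delicate point is that $\delta_\varkappa$ is an isomorphism---this is where (b)(6) enters---because it is precisely the image of the top of $M$ that accounts for the ``extra'' dimension of $\mathrm{Soc}(M)$, every further socle element of $M$ being forced to lie in $N$. Once this is established, the five-term sequence does all the remaining work and the equivalence of (vii) with the numerical conditions (i)--(vi) is immediate.
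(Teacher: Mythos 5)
Your proposal is correct and takes essentially the same route as the paper: the equivalences among (i)--(vi) are read off from Theorem~\ref{4.14'' Theorem}, and (iv)$\Leftrightarrow$(vii) is obtained by applying local cohomology to the sequence $0 \to (S/L)(-d+r-3) \to S/(I\cap L) \to S/I \to 0$ of Lemma~\ref{4.12'' Lemma+}(b) and comparing socles, using that the socle of $H^3\big((S/L)(-d+r-3)\big)$ is one-dimensional and concentrated in degree $d-r$, exactly where $H^2(S/I)$ carries its one-dimensional top component. The paper organizes the socle count slightly differently---it identifies $\mathrm{Soc}\big(H^2(S/(I\cap L))\big)$ with $\mathrm{Soc}\big(H^2(S/I)\big)_{\leq d-r-1}$ directly, rather than proving surjectivity of the connecting map onto the socle of $H^3$ via (b)(6)---and your only imprecision is terminological: ``bounded above with finite-dimensional components'' does not by itself characterize Artinian modules, but the principle you actually invoke (a graded module vanishing in all sufficiently large degrees is zero if and only if its socle is) is valid and is all that is needed.
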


\begin{proof} Indeed, the equivalences (i) $\Leftrightarrow$ (ii) $\Leftrightarrow$ (iii) $\Leftrightarrow$ (iv) $\Leftrightarrow$ (v) $\Leftrightarrow$ (vi) follow by
Theorem~\ref{4.14'' Theorem}. It remains to show the equivalence (iv) $\Leftrightarrow$ (vii). Consider the exact sequence of graded $S$-modules
$$0 \longrightarrow H^2_{*}(\mathbb{P}^r, \mathcal{I}_Y) \longrightarrow H^2_{*}(\mathbb{P}^r, \mathcal{I}_Y) \longrightarrow H^3_{*}(\mathbb{P}^r,\mathcal{I}_{\mathbb{F}}(-d+r-3)).$$
As
\begin{align*}
\mathrm{Soc}\big(H^3_{*}(\mathbb{P}^r,\mathcal{I}_{\mathbb{F}}(-d+r-3))\big) &= \Bbbk(d-r),\\
h^2(\mathbb{P}^r, \mathcal{I}_Y(j)) &= 0 \mbox{ for all } j \geq d-r, \mbox{ and }\\
h^2(\mathbb{P}^r, \mathcal{I}_X(j)) &= 0 \mbox{ for all } j \geq d-r+1
\end{align*}
we get an isomorphism of graded $S$-modules
$$\mathrm{Soc}\big(H^2_{*}(\mathbb{P}^r, \mathcal{I}_Y)\big) \cong \mathrm{Soc}\big(H^2_{*}(\mathbb{P}^r, \mathcal{I}_X)\big)_{\leq d-r-1}.$$
From this isomorphism, we see that
$$H^2_{*}(\mathbb{P}^r, \mathcal{I}_Y) = 0 \mbox{ if and only if } \mathrm{Soc}\big(H^2_{*}(\mathbb{P}^r, \mathcal{I}_X)\big)_{\leq d-r-1} = 0.$$
By Theorem~\ref{4.14'' Theorem} (b)(7) the module $H^2_{*}(\mathbb{P}^r, \mathcal{I}_Y)$ vanishes if and only if the number $h^2(\mathbb{P}^r, \mathcal{I}_Y)$ does.
So, condition (iv)  holds, if and only if $\mathrm{Soc}\big(H^2_{*}(\mathbb{P}^r, \mathcal{I}_X)\big)$ is concentrated in degrees $\geq d-r$. By Theorem~\ref{4.14'' Theorem} (a)(4) this is the case if and only if condition (vii) holds.
\end{proof}

\begin{remark}
\label{remark.eq.cond} (A) If the above equivalent conditions (i) -- (vii) hold, we must have
$$\tau(X) \in \{(1,1), (2,3)\}.$$
Moreover $\tau(X) = (2,3)$ implies the above equivalent conditions (i) -- (vii).\\
(B) Observe, that the above minimality condition (i) describes a \textit{generic situation}. So, it is noteworthy that the simplicity of the socle of $H^2_{*}(\mathbb{P}^r,\mathcal{I}_X)$ occurs in the generic situation, too. Below, we shall see, that in such a generic situation, a number of additional conclusions may be drawn.
\end{remark}

\begin{corollary}\label{corollary:coh.gen.}
Let the notations and hypotheses be as in Convention and Notation~\ref{convention and notation}. Then, the following statements hold:
\begin{itemize}
\item[\rm{(a)}] $h^1(\mathbb{P}^r, \mathcal{I}_X(2)) \leq h^0(\mathbb{P}^r, \mathcal{I}_X(2))-\binom{r}{2} +d+1.$
\item[\rm{(b)}] If the equivalent conditions (i)-(vii) of Proposition \ref{proposition:soc.eq} hold, the $S$-module $H^1_{*}(\mathbb{P}^r, \mathcal{I}_X)$ is minimally generated by $h^0(\mathbb{P}^r, \mathcal{I}_X(2))-\binom{r}{2} +d+1$ homogeneous elements of degree $2$.
\end{itemize}
\end{corollary}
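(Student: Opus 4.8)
The plan is to reduce part (a) to cohomological information about $Y$ that is already available, and then to deduce part (b) from the equality case of (a) together with a reduction modulo a general hyperplane. For part (a), note first that $X$ is linearly normal with $h^1(\mathbb P^r,\mathcal I_X(j))=0$ for $j\le 1$ (Theorem~\ref{4.14'' Theorem}(a)(2)) and $h^3(\mathbb P^r,\mathcal I_X(2))=0$ (Theorem~\ref{4.14'' Theorem}(a)(4)). Hence the sequence $0\to \mathcal I_X(2)\to \mathcal O_{\mathbb P^r}(2)\to \mathcal O_X(2)\to 0$ gives
$$h^1(\mathbb P^r,\mathcal I_X(2)) = h^0(\mathbb P^r,\mathcal I_X(2)) + h^0(X,\mathcal O_X(2)) - \binom{r+2}{2}.$$
I would then evaluate $h^0(X,\mathcal O_X(2))$ via the Hilbert polynomial. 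Since $X$ is an almost non-singular projection of the scroll $\widetilde X = S(a,d-a)$ with $\chi(\mathcal O_X(j)) = \frac{1}{2}(j+1)(dj+2) - \mathrm e(X)$ (Theorem~\ref{theorem 2.9}(e) and the finite morphism $\pi$), one gets $\chi(\mathcal O_X(2)) = 3d+3-\mathrm e(X)$; combining with $h^1(X,\mathcal O_X(2)) = h^2(\mathbb P^r,\mathcal I_X(2))$ and $h^2(X,\mathcal O_X(2)) = h^3(\mathbb P^r,\mathcal I_X(2)) = 0$ yields $h^0(X,\mathcal O_X(2)) = 3d+3-\mathrm e(X)+h^2(\mathbb P^r,\mathcal I_X(2))$.

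Substituting this and using $\binom{r+2}{2}-\binom{r}{2}=2r+1$, the inequality of part (a) becomes equivalent to $h^2(\mathbb P^r,\mathcal I_X(2)) \le \mathrm e(X) - 2(d-r) - 1$. Here I would invoke Theorem~\ref{4.14'' Theorem}(b)(3): at $j=2$ it reads $h^2(\mathbb P^r,\mathcal I_Y(2)) = h^2(\mathbb P^r,\mathcal I_X(2)) - \binom{d-r}{2}$, and at $j=1$ it reads $h^2(\mathbb P^r,\mathcal I_Y(1)) = \mathrm e(X) - \binom{d-r+2}{2}$. Since $\binom{d-r+2}{2}-\binom{d-r}{2}=2(d-r)+1$, the target is precisely $h^2(\mathbb P^r,\mathcal I_Y(2)) \le h^2(\mathbb P^r,\mathcal I_Y(1))$, which is Theorem~\ref{4.14'' Theorem}(b)(5) (dropping the $\max$). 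This proves (a), and the same computation shows that equality holds in (a) exactly when $h^2(\mathbb P^r,\mathcal I_Y(2)) = h^2(\mathbb P^r,\mathcal I_Y(1))$.

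For part (b), set $M := H^1_{*}(\mathbb P^r,\mathcal I_X)$. By Theorem~\ref{4.14'' Theorem}(a)(2) we have $M_{\le 1}=0$, so the number of minimal generators of $M$ in degree $2$ is exactly $\dim_\Bbbk M_2 = h^1(\mathbb P^r,\mathcal I_X(2))$; under conditions (i)--(vii) of Proposition~\ref{proposition:soc.eq} we have $h^2(\mathbb P^r,\mathcal I_Y(\cdot))\equiv 0$ by (iv), so the equality case of (a) gives $\dim_\Bbbk M_2 = h^0(\mathbb P^r,\mathcal I_X(2))-\binom{r}{2}+d+1$. It then remains to show $M$ has no generators in degree $\ge 3$. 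I would work with $H^1_{*}(\mathbb P^r,\mathcal I_Y)$, which is isomorphic to $M$ as a graded $S$-module by Theorem~\ref{4.14'' Theorem}(b)(2), and reduce modulo a general $h\in \mathbb U(X)$: the restriction sequence $0\to \mathcal I_Y(j-1)\xrightarrow{\,h\,}\mathcal I_Y(j)\to \mathcal I_{\mathcal C_h\cup\mathbb L_h}(j)\to 0$ together with the vanishing $H^2_{*}(\mathbb P^r,\mathcal I_Y)=0$ yields an isomorphism of graded $S/hS$-modules $H^1_{*}(\mathcal I_Y)/hH^1_{*}(\mathcal I_Y)\cong H^1_{*}(\mathbb P^r,\mathcal I_{\mathcal C_h\cup\mathbb L_h})$. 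The latter is generated in degree $2$ by \cite[Remark 3.2 B)]{BS2} (as already used for Theorem~\ref{4.14'' Theorem}(b)(5)). Since $M/S_+M \cong (M/hM)/(S/hS)_+(M/hM)$ by graded Nakayama, $M$ and $M/hM$ share the same generator degrees, so $M$ is generated in degree $2$, completing (b).

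The main obstacle is the opening move of part (a): the asserted inequality looks a priori unrelated to the results of the excerpt, and the real work lies in recognizing that, once $h^0(X,\mathcal O_X(2))$ is replaced by its Hilbert-polynomial value, the binomial bookkeeping (with the two shifts of $\binom{\cdot}{2}$ and the invariant $\mathrm e(X)$) collapses the statement exactly onto the monotonicity $h^2(\mathcal I_Y(2))\le h^2(\mathcal I_Y(1))$ furnished by Theorem~\ref{4.14'' Theorem}. Making these cancellations align precisely is the delicate point; after that, both parts follow from cited results. For part (b) the only extra care needed is verifying that reduction modulo a general $h$ commutes with passage to the hyperplane section $\mathcal C_h\cup\mathbb L_h$, which is guaranteed by the hypothesis-supplied vanishing $H^2_{*}(\mathbb P^r,\mathcal I_Y)=0$.
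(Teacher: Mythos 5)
Your argument is correct, but it reaches part (a) by a genuinely different route than the paper, while your part (b) essentially coincides with the paper's. The paper handles both parts with one device: for $h \in \mathbb{U}(X)$ the restriction sequence $0 \to \mathcal{I}_Y(-1) \stackrel{h}{\to} \mathcal{I}_Y \to \mathcal{I}_{\mathcal{C}_h \cup \mathbb{L}_h} \to 0$ gives the exact sequence $0 \to H^1_{*}(\mathbb{P}^r,\mathcal{I}_Y)/hH^1_{*}(\mathbb{P}^r,\mathcal{I}_Y) \to H^1_{*}(\mathbb{H}_h,\mathcal{I}_{\mathcal{C}_h \cup \mathbb{L}_h}) \to H^2_{*}(\mathbb{P}^r,\mathcal{I}_Y)$, and then \cite[Proposition 3.6]{BS2} is invoked, which says that the middle module is \emph{minimally generated by exactly} $h^0(\mathbb{H}_h,\mathcal{I}_{\mathcal{C}_h}(2))-\binom{r}{2}+d+1$ elements of degree $2$; part (a) then follows by comparing degree-$2$ components across the injection (linear normality converting $h^0(\mathcal{I}_{\mathcal{C}_h}(2))$ into $h^0(\mathbb{P}^r,\mathcal{I}_X(2))$), and part (b) by Nakayama once $H^2_{*}(\mathbb{P}^r,\mathcal{I}_Y)=0$. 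Your part (b) runs the same sequence and the same Nakayama step, except that you take the weaker external input \cite[Remark 3.2 B)]{BS2} (generation in degree $2$, with no count) and recover the count from the equality case of your part (a). Your part (a), by contrast, never uses \cite[Proposition 3.6]{BS2}: you compute $h^1(\mathbb{P}^r,\mathcal{I}_X(2))$ by an Euler-characteristic argument (in effect re-deriving what the paper records later as Corollary~\ref{corollary:Hilb.funct.}(a)), and the binomial bookkeeping through Theorem~\ref{4.14'' Theorem}(b)(3) collapses the asserted inequality exactly onto $h^2(\mathbb{P}^r,\mathcal{I}_Y(2)) \leq h^2(\mathbb{P}^r,\mathcal{I}_Y(1))$, which is Theorem~\ref{4.14'' Theorem}(b)(5); all the intermediate identities you need ($\chi(\mathcal{O}_X(2))=3d+3-\mathrm{e}(X)$, the two shifts of $\binom{\cdot}{2}$, the vanishings from (a)(2) and (a)(4)) check out. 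Each route has its merits: the paper's is more economical, since one citation settles both parts at once; yours keeps part (a) internal to results already proved in the paper and yields a sharper by-product, namely that equality in (a) holds if and only if $h^2(\mathbb{P}^r,\mathcal{I}_Y(2)) = h^2(\mathbb{P}^r,\mathcal{I}_Y(1))$, which by Theorem~\ref{4.14'' Theorem}(b)(3)--(5) is equivalent to the conditions of Proposition~\ref{proposition:soc.eq} -- so your computation characterizes when the bound in (a) is attained, whereas the paper's argument only delivers the implication needed for (b).
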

\begin{proof}
In view of Theorem \ref{4.14'' Theorem}(b)(2) we may replace $X$ by $Y$. We choose $h \in \mathbb{U}(X)$ and apply cohomolgy to the induced exact sequence of sheaves
\begin{equation*}
0 \rightarrow \mathcal{I}_Y (-1) \stackrel{h}{ \rightarrow} \mathcal{I}_Y \rightarrow \mathcal{I}_{\mathcal{C}_h \cup \mathbb{L}_h } \rightarrow 0
\end{equation*}
in order to end up with an exact secuence of graded $S$-modules
\begin{equation*}
0 \rightarrow H^1_{*}(\mathbb{P}^r, \mathcal{I}_Y)/hH^1_{*}(\mathbb{P}^r, \mathcal{I}_Y)\rightarrow H^1_{*}(\mathbb{H}_h, \mathcal{I}_{\mathcal{C}_h \cup \mathbb{L}_h }) \rightarrow H^2_{*}(\mathbb{P}^r, \mathcal{I}_Y)
\end{equation*}
(a): By Proposition 3.6 of \cite{BS2}, the $S$-module $H^1_{*}(\mathbb{H}_h, \mathcal{I}_{\mathcal{C}_h \cup \mathbb{L}_h })$ is minimally generated by $$h^0(\mathbb{H}_h, \mathcal{I}_{\mathcal{C}_h}(2))-\binom{r}{2} +d+1$$ homogeneous elements of degree $2$. As $X$ is linearly normal, we have $ h^0(\mathbb{H}_h, \mathcal{I}_{\mathcal{C}_h}(2)) = h^0(\mathbb{P}^r, \mathcal{I}_X(2))$. Now, our claim follows immediately.\\
(b): By our hypothesis, the third module in the above sequence vanishes. Now, we get our claim by Nakayama.
\end{proof}

\begin{corollary}\label{corollary:Hilb.funct.}
Let the notations and hypotheses be as in Convention and Notation~\ref{convention and notation}. Then, the following statements hold:
\begin{itemize}
\item[\rm{(a)}] For all $j \in \mathbb{N}_0$ we have
$$h^0(X,\mathcal{O}_X(j)) = d\binom{j+1}{2}+j+1 +h^2(\mathbb{P}^r, \mathcal{I}_X(j))- \e(X).$$
\item[\rm{(b)}] If the equivalent conditions (i)-(vii) of Proposition \ref{proposition:soc.eq} hold, then
$$h^0(X,\mathcal{O}_X(j)) = \begin{cases}d\binom{j+1}{2}+j+1 + \binom{d-r+2-j}{2}-\binom{d-r+2}{2} &\mbox{ for } 0 \leq j \leq d-r, \\ \\
d\binom{j+1}{2}+j+1 -\binom{d-r+2}{2} &\mbox{ for } d-r < j.\end{cases}$$
\end{itemize}
\end{corollary}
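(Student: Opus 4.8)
The plan is to compute the Hilbert function $h^0(X,\mathcal{O}_X(j))$ by relating it to the Hilbert polynomial of $X$ and correcting by the relevant cohomological deficiency terms. First I would recall that for all $j$ the Euler characteristic $\chi(\mathcal{O}_X(j))$ equals the Hilbert polynomial $P_X(j)$, which for a surface of degree $d$ whose general hyperplane section is a smooth rational curve of degree $d$ is readily computed: since $\mathcal{C}_h \subset \mathbb{P}^{r-1}$ is a smooth rational curve of degree $d$, one has $\chi(\mathcal{O}_X(j)) = d\binom{j+1}{2} + j + 1$. (This is the same polynomial appearing in Theorem~\ref{t1-coh}(b) for the type I case, and it follows from the structure of the projecting scroll $\widetilde{X} = S(a,d-a)$ via the almost non-singular projection of Corollary~\ref{corollary 2.7}.) Then I would write
\begin{equation*}
h^0(X,\mathcal{O}_X(j)) = \chi(\mathcal{O}_X(j)) + h^1(X,\mathcal{O}_X(j)) + h^2(X,\mathcal{O}_X(j)).
\end{equation*}

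Next I would identify the two correction terms cohomologically. For the $h^2$ term, note $h^2(X,\mathcal{O}_X(j)) = h^3(\mathbb{P}^r,\mathcal{I}_X(j))$, which by Theorem~\ref{4.14'' Theorem}(a)(4) vanishes for all $j \geq -1$, hence in particular for all $j \in \mathbb{N}_0$. For the $h^1$ term I would use $h^1(X,\mathcal{O}_X(j)) = \e(X) - h^2(\mathbb{P}^r,\mathcal{I}_X(j))$ for $j \geq 0$; the cleanest way to see this is via the invariant $\e(X)$ as defined in Notation and Reminder~\ref{2.7' Notation and Reminder}(C), together with the cohomology sequence comparing $\mathcal{O}_X$ with $\pi_*\mathcal{O}_{\widetilde{X}}$ on the Cohen--Macaulay projecting scroll, exactly as in the proof of Theorem~\ref{theorem 2.9}(d),(e). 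Substituting both into the Euler characteristic identity gives
\begin{equation*}
h^0(X,\mathcal{O}_X(j)) = d\binom{j+1}{2} + j + 1 + h^2(\mathbb{P}^r,\mathcal{I}_X(j)) - \e(X),
\end{equation*}
which is statement (a).

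For statement (b), I would simply feed in the explicit value of $h^2(\mathbb{P}^r,\mathcal{I}_X(j))$ available under the equivalent conditions of Proposition~\ref{proposition:soc.eq}: condition (vi) gives $h^2(\mathbb{P}^r,\mathcal{I}_X(j)) = \max\{0,\binom{-j+d-r+2}{2}\}$ for all $j \geq 0$, while condition (i) gives $\e(X) = \binom{d-r+2}{2}$. Inserting these into part (a) and distinguishing the two ranges according to whether the binomial argument $-j + d-r+2$ is nonnegative (i.e.\ $j \leq d-r$) or negative (i.e.\ $j > d-r$, where the $\max$ picks out $0$) yields precisely the two-case formula in (b). The main obstacle, such as it is, lies not in (b) but in pinning down the $h^1(X,\mathcal{O}_X(j)) = \e(X) - h^2(\mathbb{P}^r,\mathcal{I}_X(j))$ identity with full rigor: one must carefully track the grading and confirm that the deficiency module contribution is exactly $\e(X)$ in the nonnegative-degree range, which is where the precise interplay between the local cohomology $H^2(S/I)$ and the finite-length sheaf $\mathcal{F} = \pi_*\mathcal{O}_{\widetilde{X}}/\mathcal{O}_X$ must be invoked. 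Everything else is routine substitution and bookkeeping of binomial coefficients.
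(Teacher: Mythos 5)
Your Euler--characteristic strategy can be made to work, but both of the key intermediate identities you state are false, and the point you yourself flag as delicate is exactly where things break. First, $\chi(\mathcal{O}_X(j)) = d\binom{j+1}{2}+j+1$ is the Hilbert polynomial of the projecting scroll $\widetilde{X} \subset \mathbb{P}^{d+1}$, not of $X$: the sequence $0 \to \mathcal{O}_X \to \pi_*\mathcal{O}_{\widetilde{X}} \to \mathcal{F} \to 0$, with $\length(\mathcal{F}) = \e(X)$, gives $\chi(\mathcal{O}_X(j)) = d\binom{j+1}{2}+j+1-\e(X)$, and for a type II surface $\e(X) \geq \binom{d-r+2}{2} > 0$ by Theorem~\ref{4.14'' Theorem}(a)(1), so the discrepancy is nonzero (the analogy with Theorem~\ref{t1-coh}(b) fails precisely because type I surfaces are smooth, with $\e(X)=0$). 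Second, the identity $h^1(X,\mathcal{O}_X(j)) = \e(X)-h^2(\mathbb{P}^r,\mathcal{I}_X(j))$ is wrong: since $H^1(\mathbb{P}^r,\mathcal{O}_{\mathbb{P}^r}(j)) = H^2(\mathbb{P}^r,\mathcal{O}_{\mathbb{P}^r}(j)) = 0$, the ideal-sheaf sequence forces $h^1(X,\mathcal{O}_X(j)) = h^2(\mathbb{P}^r,\mathcal{I}_X(j))$ for \emph{all} $j$. The quantity $\e(X)-h^2(\mathbb{P}^r,\mathcal{I}_X(j))$ is something else entirely: it equals $\dim_\Bbbk C_j$, where $C := B/D$ is the cokernel of $D := \Gamma_{*}(X,\mathcal{O}_X) \hookrightarrow B$ (the coordinate ring of $\widetilde{X}$); it counts the sections of $\mathcal{O}_{\widetilde{X}}(j)$ not coming from $X$, and must be \emph{subtracted} from $\dim_\Bbbk B_j$, not added to $\chi$.

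These two errors do not cancel: substituting your stated inputs into your own identity yields $d\binom{j+1}{2}+j+1+\e(X)-h^2(\mathbb{P}^r,\mathcal{I}_X(j))$, which differs from the formula you assert (and from the corollary) by $2\big(\e(X)-h^2(\mathbb{P}^r,\mathcal{I}_X(j))\big)$; this is strictly positive already at $j=1$, where $h^2(\mathbb{P}^r,\mathcal{I}_X(1)) = \e(X)+r-d-1 < \e(X)$. So as written the argument does not prove (a): the correct conclusion appears, but it does not follow from the steps. (There is also the cosmetic sign slip $h^0 = \chi + h^1 - h^2$ rather than $+h^2$, harmless here since $h^2(X,\mathcal{O}_X(j)) = h^3(\mathbb{P}^r,\mathcal{I}_X(j)) = 0$ for $j \geq 0$.) The repair is short and lands you essentially on the paper's proof: either run the Euler-characteristic computation with the corrected inputs $\chi(\mathcal{O}_X(j)) = d\binom{j+1}{2}+j+1-\e(X)$ and $h^1(X,\mathcal{O}_X(j)) = h^2(\mathbb{P}^r,\mathcal{I}_X(j))$, or, as the paper does, read off $h^0(X,\mathcal{O}_X(j)) = \dim_\Bbbk D_j = \dim_\Bbbk B_j - \dim_\Bbbk C_j$ directly from the graded exact sequence $0 \to D \to B \to C \to 0$, using $\dim_\Bbbk B_j = d\binom{j+1}{2}+j+1$ and $\dim_\Bbbk C_j = \e(X)-h^2(\mathbb{P}^r,\mathcal{I}_X(j))$. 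Your treatment of part (b) --- substituting conditions (i) and (vi) of Proposition~\ref{proposition:soc.eq} into (a) and splitting at $j=d-r$ --- is correct and is exactly the paper's argument.
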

\begin{proof} (a): Once more, let $B$ be the homogeneous coordinate ring of the projecting scroll $\widetilde{X} \subset \mathbb{P}^{d+1}$ of $X$, let $D := D_{A_+}(A) = \Gamma_{*}(X,\mathcal{O}_X)$ be the $A_+$-transform of $A$ and consider the short exaxt sequence $0 \rightarrow D \rightarrow B \rightarrow C \rightarrow 0$ in which $C$ is a one-dimensional Cohen-Macaulay module of dimension $1$ with
$$\dim_\Bbbk(C_j) = \e(X) - h^2(\mathbb{P}^r, \mathcal{I}_X(j)) \mbox{ for all } j \in \mathbb{Z}.$$ As
$$\dim_\Bbbk(B_j)= \chi(\mathcal{O}_{\widetilde{X}}(j)) = d\binom{j+1}{2}+j+1 \mbox{ for all } j \in \mathbb{N}_0,$$
we get our claim.\\
(b): This follows immediately from statement (a) bearing in mind the values of $\e(X)$ and of $h^2(\mathbb{P}^r, \mathcal{I}_X(j))$ imposed by the conditions (i) and (vi) of Proposition \ref{proposition:soc.eq}.
\end{proof}

\subsection*{The second deficiency module} We first remind the notion of deficiency module.

\begin{reminder}
\label{reminder.def.mod} Let $A$ be the homogeneous coordinate ring of the surface $X \subset \mathbb{P}^r$, let $M$ be a finitely generated graded $A$-module and let $i \in \mathbb{N}_0$. Then, the $i$-th \textit{deficiency module} of $M$ is defined by
$$K^i(M) := \mathrm{Ext}^{r-i+1}_S(M,S(-r-1)) \cong H^i(M)^{\vee},$$
where $\bullet ^{\vee} := {}^*\mathrm{Hom}_{\Bbbk}(\bullet, \Bbbk)$ denotes the (contravariant exact) graded Matlis duality functor. The module $K^{\dim(M)}(M)$ is called the
\textit{canonical module} of $M$.
\end{reminder}
In this subsection, we are interested in the second deficiency module
$$K^2(A) = \mathrm{Ext}^{r-1}_S(A,S(-r-1)) = H^2(A)^{\vee}$$ of the coordinate ring $A$ of $X$ and its induced sheaf
$$\mathcal{K}^2_X := \widetilde{K^2(A)}.$$

\begin{proposition}
\label{proposition:def.mod} Let the notations and hypotheses be as in Convention and Notation~\ref{convention and notation}. Let
$\pi = \pi_\Lambda:\widetilde{X} \twoheadrightarrow X$ be the standard normalization of $X$. Let $B$ be the homogeneous coordinate ring of the projecting scroll $\widetilde{X} \subset \mathbb{P}^{d+1}$, let $D := D_{A_+}(A) = \Gamma_{*}(X,\mathcal{O}_X) (\subseteq B)$ and set $\mathcal{F} := \pi_{*}(\mathcal{O}_{\widetilde{X}})/\mathcal{O}_X = \widetilde{B/D}.$ Then, the following statements hold:
\begin{itemize}
\item[\rm{(a)}] $K^2(A) \cong K^1(B/D)$ is a Cohen-Macaulay module of dimension one, and
                \begin{itemize}
                \item[\rm{(1)}] $\mathrm{reg}(K^2(A)) = 0;$
                \item[\rm{(2)}] $\mathcal{K}^2_X \cong \widetilde{\Gamma_{*}(X,\mathcal{F})^{\vee}}.$
                \end{itemize}
\item[\rm{(b)}] For each closed point $x \in X$, the stalk $\mathcal{K}^2_{X,x}$ of $\mathcal{K}^2_X$ at $x$ coincides with the first deficiency module
               $K^1(\mathcal{O}_{X,x}) = \mathrm{Ext}^{r-1}_{\mathcal{O}_{X,x}}(\mathcal{O}_{X,x},\mathcal{O}_{\mathbb{P}^r,x})$ of the local ring $\mathcal{O}_{X,x}$ of $X$ at $x$.
               In particular
               \begin{itemize}
               \item[\rm{(1)}] $\mathrm{length}_{\mathcal{O}_{X,x}}(\mathcal{K}^2_{X,x}) = \mathrm{e}_x(X)$ for all closed points $x \in X;$
               \item[\rm{(2)}] $\mathrm{Supp}(\mathcal{K}^2_X) = \mathrm{Sing}(X);$
               \item[\rm{(3)}] $\mathrm{length}_{\mathcal{O}_X}(\mathcal{K}^2_X) = \mathrm{e}(X).$
               \end{itemize}
\item[\rm{(c)}] If the equivalent conditions of Proposition~\ref{proposition:soc.eq} hold, then $K^2(A) \cong S/J(d-r)$, where $J \subset S$ is a saturated graded ideal such
                that:
               \begin{itemize}
               \item[\rm{(1)}] $I+L \subset J;$
               \item[\rm{(2)}] $\reg(J) = d-r+1;$
               \item[\rm{(3)}] $J$ is minimally generated by $L$ and $d-r+2$ forms of degree $d-r+1.$
               \end{itemize}
\item[\rm{(d)}] If the equivalent conditions of Proposition~\ref{proposition:soc.eq} do not hold, then the $S$-module $K^2(A)$ is minimally generated by one element of degree
                $r-d$ and some additional elements of degrees $> r-d.$

\end{itemize}
\end{proposition}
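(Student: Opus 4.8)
The backbone of the argument is the short exact sequence $0 \to D \to B \to C \to 0$ with $C = B/D$ already used in the proof of Theorem~\ref{theorem 2.9}(d), together with the Matlis duality $K^i(M) \cong H^i(M)^{\vee}$ of Reminder~\ref{reminder.def.mod}. First I would establish part (a) by dualizing the long exact local cohomology sequence of this extension. As $B$ is Cohen-Macaulay of dimension $3$, the modules $H^1(B)$ and $H^2(B)$ vanish, so the sequence gives $H^2(A) \cong H^2(D) \cong H^1(C)$; applying $(-)^{\vee}$ yields $K^2(A) \cong K^1(C) = K^1(B/D)$. Since $C$ is Cohen-Macaulay of dimension $1$, $K^1(C)$ is its canonical module and hence is again Cohen-Macaulay of dimension $1$. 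For (a)(1) I would use the identity $H^1(K^1(C)) \cong C^{\vee}$ for the canonical module of a one-dimensional Cohen-Macaulay module to get $\reg(K^1(C)) = \operatorname{end}(C^{\vee}) + 1 = -\beg(C) + 1$, and then read off $\beg(C) = 1$ from the already known facts $C_{\leq 0} = 0$ and $\dim_{\Bbbk} C_1 = d-r+1 > 0$, so that $\reg(K^2(A)) = 0$. For (a)(2) I would Matlis-dualize the transform sequence $0 \to C \to D_{A_+}(C) \to H^1(C) \to 0$ (valid since $\depth(C) \geq 1$) to obtain $0 \to K^1(C) \to D_{A_+}(C)^{\vee} \to C^{\vee} \to 0$; because $C$ is concentrated in positive degrees, $C^{\vee}$ is concentrated in negative degrees and hence sheafifies to zero, leaving $\mathcal{K}^2_X \cong \widetilde{D_{A_+}(C)^{\vee}} = \widetilde{\Gamma_{*}(X,\mathcal{F})^{\vee}}$.

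Part (b) is local. The sheaf $\mathcal{K}^2_X = \widetilde{\Ext^{r-1}_S(A,S(-r-1))}$ is the $\mathcal{E}xt$-sheaf $\mathcal{E}xt^{r-1}_{\mathcal{O}_{\mathbb{P}^r}}(\mathcal{O}_X,\omega_{\mathbb{P}^r})$, and the compatibility of $\mathcal{E}xt$-sheaves with localization identifies its stalk at a closed point $x$ with the local first deficiency module $K^1(\mathcal{O}_{X,x})$. Statement (1) then follows because local Matlis duality gives $\length(K^1(\mathcal{O}_{X,x})) = \length(H^1_{\mathfrak{m}_{X,x}}(\mathcal{O}_{X,x})) = \e_x(X)$; statement (2) follows since this length is nonzero precisely at the non-Cohen-Macaulay points, which coincide with $\Sing(X)$ by Theorem~\ref{theorem 2.9}(b); and statement (3) is the summation $\length_{\mathcal{O}_X}(\mathcal{K}^2_X) = \sum_x \e_x(X) = \e(X)$.

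For part (c) the plan is to use the plane $\mathbb{F}$. I would start from the exact sequence $0 \to (S/L)(r-d-3) \to S/(I \cap L) \to S/I \to 0$ of Lemma~\ref{4.12'' Lemma+}(b) and pass to local cohomology. Under the equivalent conditions we have $H^2(S/(I\cap L)) \cong H^2_{*}(\mathbb{P}^r,\mathcal{I}_Y) = 0$, so the connecting homomorphism embeds $H^2(A) = H^2(S/I)$ into $H^3\big((S/L)(r-d-3)\big)$. Dualizing produces a surjection $K^3(S/L)(d-r+3) \twoheadrightarrow K^2(A)$; since $S/L$ is a polynomial ring in three variables with $K^3(S/L) = (S/L)(-3)$, the source is $(S/L)(d-r)$, whence $K^2(A) \cong (S/J)(d-r)$ is cyclic with $L \subseteq J$, and $J$ is saturated because $K^2(A)$ is Cohen-Macaulay of dimension $1$ by (a). Statement (1) follows from $I \subseteq \operatorname{Ann}(K^2(A)) = J$, as $K^2(A)$ is an $S/I$-module; statement (2) is the bookkeeping $\reg(J) = \reg(S/J) + 1 = \reg(K^2(A)) + (d-r) + 1 = d-r+1$. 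For (3) I would compute $\dim_{\Bbbk}(S/J)_m = h^2(\mathbb{P}^r,\mathcal{I}_X(d-r-m))$ from Theorem~\ref{4.14'' Theorem}(a)(3) under the conditions, verify that $(J/L)_m = 0$ for $m \leq d-r$ while $\dim_{\Bbbk}(J/L)_{d-r+1} = d-r+2$, and combine this with the bound $\reg(J) = d-r+1$ to conclude that $J$ is minimally generated by $L$ together with exactly $d-r+2$ forms of degree $d-r+1$.

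Part (d) rests on the numerical form of Matlis duality: the minimal number of generators of $K^2(A)$ equals $\dim_{\Bbbk}\big(K^2(A)/\mathfrak{m}K^2(A)\big) = \dim_{\Bbbk}\mathrm{Soc}\big(H^2_{*}(\mathbb{P}^r,\mathcal{I}_X)\big)$. The module $K^2(A)$ begins in degree $r-d$, where $\dim_{\Bbbk}(K^2(A))_{r-d} = h^2(\mathbb{P}^r,\mathcal{I}_X(d-r)) = 1$, so there is exactly one minimal generator in that degree. If the equivalent conditions fail, then by Proposition~\ref{proposition:soc.eq}(vii) the socle has dimension at least $2$, forcing at least one further minimal generator, which must lie in a degree $> r-d$. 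I expect the main obstacle to be part (c)(3): the cyclicity itself drops out cleanly from the plane sequence, but passing from the Hilbert-function computation to the precise claim about \emph{minimal} generators requires ruling out generators in the intermediate degrees $2,\dots,d-r$ and confirming minimality in degree $d-r+1$ — exactly the point where the regularity bound $\reg(J) = d-r+1$ must be combined with the vanishing $(J/L)_{\leq d-r} = 0$.
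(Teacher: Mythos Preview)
Your proposal is correct and follows essentially the same route as the paper. The only noteworthy variation is in part~(c)(1): the paper first reads off cyclicity of $K^2(A)$ from the simplicity of $\mathrm{Soc}(H^2(A))$ and then uses the Mayer--Vietoris sequence $0 \to S/(I\cap L) \to S/I \oplus S/L \to S/(I+L) \to 0$ together with $H^2(S/(I\cap L))=0$ to see that $I+L$ annihilates $K^2(A)$, whereas you obtain the surjection $(S/L)(d-r)\twoheadrightarrow K^2(A)$ directly from the sequence of Lemma~\ref{4.12'' Lemma+}(b) (getting cyclicity and $L\subseteq J$ in one stroke) and then invoke the $S/I$-module structure of $K^2(A)$ for $I\subseteq J$; both arguments are valid and equally short.
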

\begin{proof} (a):  Let $C := B/D.$  As seen in the proof of Theorem~\ref{theorem 2.9} (d) we have
$$H^2(A) \cong H^2(D) \cong H^1(C), \quad \widetilde{D_{A_+}(C)} \cong \widetilde{C} \cong \mathcal{F} \mbox{ and } D_{A_+}(C) \cong \Gamma_{*}(X,\mathcal{F}).$$
Applying the functor $\bullet^\vee$ to the first isomorphism, we get $K^2(A) \cong K^1(C) = K^1(B/A)$. As $C$ is a Cohen-Macaulay module of dimension $1$, so is its canonical module $K^1(C)$. \\
To prove claim (1), keep in mind that $K^2(A)$ is a Cohen-Macaulay module of dimension $1$, so that indeed
\begin{align*}
\reg(K^2(A)) &= \mathrm{end}\big(H_1(K^2(A))\big) +1 = \sup\{j \in \mathbb{Z} \mid \mathrm{dim}_{\Bbbk}(K^2(A)_{j-1}) < \mathrm{dim}_{\Bbbk}(K^2(A)_j )\}+1 \\
             & = \sup\{j \in \mathbb{Z} \mid \mathrm{dim}_{\Bbbk}(H^2(A)_{-j+1}) < \mathrm{dim}_{\Bbbk}(H^2(A)_{-j})\}+1 = -1+1 = 0.
\end{align*}
To prove claim (2), we apply the Matlis duality functor
$\bullet^\vee$ to the short exact sequence $0\rightarrow C
\rightarrow D_{A_+}(C) \rightarrow H^2(A) \rightarrow 0$ and get an
exact sequence of graded $A$-modules $0 \longrightarrow K^2(A)
\longrightarrow D_{A_+}(C)^\vee \longrightarrow C^\vee
\longrightarrow 0$.
As $\widetilde{C^\vee} = 0$,  we obtain $\mathcal{K}^2_X \cong \widetilde{\Gamma_{*}(X,\mathcal{F})^{\vee}}.$  \\
(b): The standard isomorphism $\widetilde{\mathrm{Ext}^{r-1}_S(A,S(-r-1))}_x \cong \mathrm{Ext}^{r-1}_{\mathcal{O}_{X,x}}(\mathcal{O}_{X,x},\mathcal{O}_{\mathbb{P}^r,x})$ implies the requested isomorphism $\mathcal{K}^2_{X,x} \cong K^1(\mathcal{O}_{X,x}).$ As taking local Matlis duals preserve lengths, local duality implies that
$$\mathrm{e}_x(X) = \mathrm{length}\big(H^1_{\mathfrak{m}_{X,x}}(\mathcal{O}_{X,x})\big) = \mathrm{length}\big(K^1(\mathcal{O}_{X,x})\big),$$
and this implies claim (1). Now, claims (2) and (3) are immediate.\\
(c): As $H^2(A) \cong H^2_{*}(\mathbb{P}^r,\mathcal{I}_X )$ our
hypotheses imply that $\mathrm{Soc}(H^2(A)) = H^2(A)_{d-r}\cong
\Bbbk,$ and hence the standard isomorphism
$\mathrm{Soc}\big(H^2(A)\big)^\vee \cong H^2(A)^\vee \otimes_A
\Bbbk$ shows that $K^2(A)$ is generated by one single element of
degree $r-d$. Therefore $K^2(A) \cong S/J(d-r)$ for some homogeneous
ideal $J \subset S$. By statement (a), this ideal $J$ is saturated.
If we apply cohomology to the exact sequence $0\rightarrow S/I\cap L
\rightarrow S/I \oplus S/L \rightarrow S/(I+L) \rightarrow 0$ and
keep in mind, that
$H^2(S/I\cap L) = H^2_{*}(\mathbb{P}^r,\mathcal{I}_{X \cup \mathbb{F}(X)}) = 0$, we get a monomorphism $H^2(A) \rightarrow H^2(S/(I+L))$.
Therefore $(I+L)K^2(A) = 0$ and hence $I+L \subseteq J$. As $K^2(A)$ is of dimension $1$, the inclusion is strict. This proves claim (1).\\
According to Statement (a)(1) we have $\reg(J) = \reg(S/J)+1 = \reg(K^2(A)(r-d))+1 = \reg(K^2(A)) + d-r+1 = d-r+1$, and this proves claim (2).\\
Moreover, in view of condition (v) of Proposition~\ref{proposition:soc.eq} we have
$$\dim_\Bbbk(S/J)_k = \dim_\Bbbk\big(H^2(A)_{r-d+k}\big) = \binom{\min\{k,d-r\}+2}{2} \mbox{ for all } k \in \mathbb{N}_0,$$
so that
$$\dim_\Bbbk\big((S/J)_k\big) = \dim_\Bbbk\big((S/L)_k\big) \mbox{ for all } k \leq d-r \mbox{ and }$$
$$\dim_\Bbbk\big((S/J)_{d-r+1} \big) = \dim_\Bbbk\big((S/L)_{d-r+1}\big) - (d-r+2).$$
In view of claims (1) and (2) this proves claim (3).\\
(d): If the equivalent conditions of Proposition~\ref{proposition:soc.eq} do not hold, then $\dim_\Bbbk\big(\mathrm{Soc}(H^2(A))_j\big)$ vanishes for all $j > d-r$, takes the value $1$ for $j=d-r$ and does not vanish for some $j < d-r$ (see Theorem~\ref{4.14'' Theorem} (a)(3)). Now, we get our claim by the isomorphism $\mathrm{Soc}\big(H^2(A)\big)^\vee \cong H^2(A)^\vee \otimes_A \Bbbk.$
\end{proof}

\begin{remark}\label{Remark:def.mod} Observe that the previous proposition generalizes Theorem 3.6 (e) of
\cite{BS6}.
\end{remark}

\subsection*{An application in higher dimensions} We now draw a conclusion for higher dimensional varieties of maximal sectional regularity.

\begin{corollary}\label{cor:higherdim}
Let $n \geq 2$ and let $X \subset \mathbb{P}^r$ be an $n$-dimensional variety of maximal sectional regularity of degree $d$ and of type II. Then $X$ is linearly normal and we have
\begin{equation*}
h^0 (\mathbb{P}^r ,\mathcal{I}_X (2)) \geq {{r-n+1}\choose{2}}-d-1
\end{equation*}
with equality if and only if $X \cup \mathbb{F}(X)$ is arithmetically Cohen-Macaulay. Moreover, if equality is attained, then $\depth(X)=n$.
\end{corollary}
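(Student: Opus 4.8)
The plan is to reduce the whole statement to the general curve section of $X$, paralleling the surface case of Theorem~\ref{4.14'' Theorem}(d). Put $c := r-n$ and let $\Lambda = \mathbb{P}^{c+1} = \mathbb{P}^{r-n+1}$ be cut out by $n-1$ general hyperplanes, so that $\mathcal{C} := X \cap \Lambda \subset \Lambda$ is a general linear curve section: a nondegenerate smooth rational curve of degree $d$ and of maximal regularity, carrying a unique extremal secant line $\mathbb{L} = \mathbb{L}_\Lambda$ (Proposition~\ref{proposition 2.3}). Since $\mathbb{F}(X) = \mathbb{P}^n$ and $\Lambda$ is general, $\mathbb{F}(X) \cap \Lambda$ is a line containing $\mathbb{L}$, hence equals $\mathbb{L}$; thus $Y \cap \Lambda = \mathcal{C} \cup \mathbb{L}$ is a general linear section of $Y = X \cup \mathbb{F}(X)$.

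I would first prove linear normality, $h^1(\mathbb{P}^r, \mathcal{I}_X(1)) = 0$, by induction on $n$. The base case $n=2$ is Theorem~\ref{4.14'' Theorem}(a)(2). For the step, a general hyperplane section $X' = X \cap \mathbb{H}$ is again an integral $(n-1)$-dimensional variety of maximal sectional regularity of type~II (with extremal variety $\mathbb{F}(X)\cap \mathbb{H}$), hence linearly normal by induction; applying cohomology to $0 \to \mathcal{I}_X \to \mathcal{I}_X(1) \to \mathcal{I}_{X'}(1) \to 0$ and using $H^1(\mathbb{P}^r,\mathcal{I}_X) = 0$ (valid since $X$ is integral, so $H^0(\mathcal{O}_{\mathbb{P}^r}) \to H^0(\mathcal{O}_X)$ is an isomorphism) gives an injection $H^1(\mathbb{P}^r, \mathcal{I}_X(1)) \hookrightarrow H^1(\mathbb{H}, \mathcal{I}_{X'}(1)) = 0$. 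Because every section $X = X_0 \supset X_1 \supset \cdots \supset X_{n-1} = \mathcal{C}$ in the chain is then nondegenerate and linearly normal, the degree-$2$ parts of the same exact sequences are isomorphisms, so that $h^0(\mathbb{P}^r, \mathcal{I}_X(2)) = h^0(\Lambda, \mathcal{I}_{\mathcal{C}}(2))$.

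Next I would apply \cite[Proposition 3.6]{BS2} to the maximal-regularity curve $\mathcal{C} \subset \mathbb{P}^{r-n+1}$; it provides the lower bound for $h^0(\Lambda, \mathcal{I}_{\mathcal{C}}(2))$ asserted in the statement and identifies equality with the arithmetic Cohen-Macaulayness of $\mathcal{C} \cup \mathbb{L}$. Combined with the identification of the previous paragraph, this yields the claimed inequality for $h^0(\mathbb{P}^r, \mathcal{I}_X(2))$ and, since $\mathcal{C}\cup \mathbb{L} = Y \cap \Lambda$ is a general linear section of $Y$ and $A_Y = S/(I\cap L)$ is saturated (hence of positive depth), the standard hyperplane-section principle for Cohen-Macaulayness shows that $\mathcal{C}\cup\mathbb{L}$ is arithmetically Cohen-Macaulay if and only if $Y$ is. Suppose finally that equality holds, so $Y$ is arithmetically Cohen-Macaulay, i.e. $H^i(S/(I\cap L)) = 0$ for $1 \leq i \leq n$. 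From the exact sequence
\[
0 \to (S/L)(-(d-c+1)) \to S/(I\cap L) \to S/I \to 0,
\]
obtained from $I = (I \cap L, f)$ with $f$ a form of degree $d-c+1$ cutting out the type~II hypersurface $X \cap \mathbb{F}$ (as in the proof of Lemma~\ref{4.12'' Lemma+}(b)), together with $H^i(S/L) = 0$ for $i \leq n$, I get $H^i(S/I) \cong H^i(S/(I\cap L)) = 0$ for $i \leq n-1$, whence $\depth(X) \geq n$. Conversely $X$ is not arithmetically Cohen-Macaulay, since otherwise its general curve section $\mathcal{C}$ would be, contradicting $h^1(\Lambda, \mathcal{I}_{\mathcal{C}}(1)) = d-r+n-1 > 0$; hence $\depth(X) \leq n$, and therefore $\depth(X) = n$.

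The main obstacle I anticipate is the bookkeeping in the reduction: one must verify that integrality, the type~II structure and linear normality genuinely propagate to every general section $X_i$, and that the equality criterion is reflected in both directions under taking general linear sections. Care is also needed with the exact constant in the bound, which is whatever \cite[Proposition 3.6]{BS2} yields for a degree-$d$ curve in $\mathbb{P}^{r-n+1}$, and with the cone case, where some cited surface results carry a ``not a cone'' hypothesis and one should argue directly via the curve section rather than by reduction to Theorem~\ref{4.14'' Theorem}(d).
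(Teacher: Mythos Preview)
Your approach is essentially the paper's own: induct on $n$ to get linear normality, push $h^0(\mathcal{I}_X(2))$ down to the curve section, invoke \cite[Proposition~3.6]{BS2}, and translate the equality criterion back via the hyperplane-section principle for Cohen--Macaulayness. The paper inducts one step at a time and stops at the surface case (Theorem~\ref{4.14'' Theorem}(d)), while you go straight to the curve; this is a cosmetic difference and, as you note, actually sidesteps the ``not a cone'' convention attached to Theorem~\ref{4.14'' Theorem}.

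There is, however, one genuine gap in your depth argument. The exact sequence
\[
0 \longrightarrow (S/L)\bigl(-(d-c+1)\bigr) \longrightarrow S/(I\cap L) \longrightarrow S/I \longrightarrow 0
\]
rests on the identity $I = (I\cap L, f)$ for a single form $f$ of degree $d-c+1$. In the surface case (Lemma~\ref{4.12'' Lemma+}(b)) this uses that $I$ is generated in degrees $\le d-r+3$, which in turn comes from the regularity bound of Theorem~\ref{theorem 2.9}(a). No such regularity bound is available here for $n>2$, so you cannot simply import that argument. The paper avoids this by appealing only to the fact that $X\cap\mathbb{F}\subset\mathbb{F}$ is a hypersurface, which makes $S/(I+L)^{\mathrm{sat}}$ Cohen--Macaulay of dimension $n$; then the Mayer--Vietoris sequence
\[
0 \longrightarrow S/(I\cap L) \longrightarrow S/I \oplus S/L \longrightarrow S/(I+L) \longrightarrow 0
\]
together with $H^i(S/L)=0$ for $i\le n$ and $H^i(S/(I+L))=H^i\bigl(S/(I+L)^{\mathrm{sat}}\bigr)=0$ for $1\le i\le n-1$ gives $H^i(S/I)\cong H^i(S/(I\cap L))=0$ for $1\le i\le n-1$, hence $\depth(X)\ge n$. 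Replace your sequence by this one and the rest of your argument (including the upper bound $\depth(X)\le n$ via the non-aCM curve section) goes through unchanged.
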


\begin{proof}
For $n = 2,$ our claim follows by Theorem~\ref{4.14'' Theorem} (a)(2) and (d).\\
So let $n > 2$. Note that a general hyperplane section
$X' = X \cap \mathbb{H} \subset \mathbb{H}, (\mathbb{H} =\mathbb{P}^{r-1})$ of $X$ is again a variety of maximal sectional regularity and of degree $d$ of type II. So, by induction and on use of the exact sequence
$$0 \longrightarrow \mathcal{I}_X (-1) \longrightarrow  \mathcal{I}_X \longrightarrow  \mathcal{I}_{X'}  \longrightarrow 0$$
it first follows that $h^1(\mathbb{P}^r,\mathcal{I}_X (1)) = 0$ and then that $h^0 (\mathbb{P}^r ,\mathcal{I}_X (2)) = h^0 (\mathbb{H} ,\mathcal{I}_{X'}(2)) \geq {{r-n+1}\choose{2}}-d-1.$ So, $X$ is linearly normal and satisfies the requested inequality.\\
As
$\mathbb{F}(X') = \mathbb{F}(X \cap \mathbb{H})= \mathbb{F}(X) \cap \mathbb{H}$
(see \cite[Lemma 5.1 (a)]{BLPS1}), we have $\big(X \cup \mathbb{F}(X)\big)\cap \mathbb{H} = X' \cup \mathbb{F}(X')$. So, $\big(X \cup \mathbb{F}(X)\big)\cap \mathbb{H}$ is arithmetically Cohen-Macaulay if and only $X' \cup \mathbb{F}(X')$ is. Therefore, again by induction, equality holds if and only if $X \cup \mathbb{F}(X)$ is arithmetically Cohen-Macaulay.\\
Finally, by~\cite[Theorem 7.1]{BLPS1} we know that $X \cap \mathbb{F} \subset \mathbb{F}$ is a hypersurface. Therefore $\depth(X) = n$ if $X \cup \mathbb{F}(X)$ is arithmetically Cohen-Macaulay.
\end{proof}

\subsection*{Comparing Betti numbers} We finish this section with a comparison of the Betti numbers of $X$ and $Y = X \cup \mathbb{F}(X).$

\begin{proposition}\label{prop:BettiNumbers}
Let the notations and hypotheses be as in Convention and Notation~\ref{convention and notation} and assume that $X$ is not a cone. Set $m:= \reg(Y)$. Then the following statements hold:
\begin{itemize}
\item[\rm{(a)}] For all $i \geq 1$ we have
$$\beta_{i,j}(X) = \begin{cases}
                              \beta_{i,j}(Y)                           & \mbox{for $1 \leq j \leq m-1$},\\
                              \beta_{i,j}(Y) = 0                       & \mbox{for  $m \leq j \leq d-r+1$},\\
                              \beta_{i,d-r+2}(Y) + \binom{r-2}{i-1} & \mbox{for $j=d-r+2$}.
\end{cases}$$
\item[\rm{(b)}] $m \leq d-r+2$ if and only if $\beta_{i,d-r+2}(X) = \binom{r-2}{i-1}$ for all $i \geq 1$.
\end{itemize}
\end{proposition}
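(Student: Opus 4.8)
The plan is to derive both statements from the single short exact sequence of graded $S$-modules
$$0 \longrightarrow (S/L)(-d+r-3) \longrightarrow S/(I\cap L) \longrightarrow S/I \longrightarrow 0$$
that already appears in the proof of Theorem~\ref{4.14'' Theorem}~(b)(2); it is built from the identity $I = (I\cap L, f)$ with $\deg(f) = d-r+3$ furnished by Lemma~\ref{4.12'' Lemma+}~(b), together with the fact that $S/L$ is an integral domain and $f\notin L$. Writing $A_X := S/I$, $A_Y := S/(I\cap L)$ and $M := (S/L)(-d+r-3)$, and abbreviating $\Tor_i := \Tor^S_i(\Bbbk,-)$, I would apply $\Tor_\bullet$ to this sequence and analyse the resulting long exact sequence one internal degree at a time.

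The first ingredient is the Betti table of $M$. Since $\mathbb{F} = \mathbb{P}^2 \subset \mathbb{P}^r$, the ideal $L$ is generated by $r-2$ linearly independent linear forms, so $S/L$ is resolved by the Koszul complex on these forms; hence $\Tor_i(S/L)$ is concentrated in internal degree $i$ and has dimension $\binom{r-2}{i}$. After the twist by $-d+r-3$ this yields $\beta_{i,j}(M) = \binom{r-2}{i}$ if $j = d-r+3$ and $\beta_{i,j}(M) = 0$ otherwise. The second ingredient is the regularity input: by Theorem~\ref{4.14'' Theorem}~(b)(1) we have $\reg(A_Y) = m-1 \leq d-r+2$, so that $\beta_{i,j}(Y) = 0$ for all $j \geq m$ (and in particular for all $j \geq d-r+3$).

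For statement (a), I would examine in internal degree $e = i+j$ the five-term strand
$$\Tor_i(M)_{e} \longrightarrow \Tor_i(A_Y)_{e} \longrightarrow \Tor_i(A_X)_{e} \stackrel{\partial}{\longrightarrow} \Tor_{i-1}(M)_{e} \longrightarrow \Tor_{i-1}(A_Y)_{e}.$$
By the concentration of $\Tor(M)$, the term $\Tor_i(M)_{e}$ is nonzero only for $j = d-r+3$ and $\Tor_{i-1}(M)_e$ only for $j = d-r+2$. Thus for $j \leq d-r+1$ both $M$-terms vanish and $\beta_{i,j}(X) = \beta_{i,j}(Y)$; this gives the first line of (a) (for $1 \leq j \leq m-1$) and, combined with $\beta_{i,j}(Y)=0$ for $j\geq m$, also the second line (for $m \leq j \leq d-r+1$). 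The only delicate case is $j = d-r+2$, where $\Tor_i(M)_e = 0$ but $\Tor_{i-1}(M)_e$ has dimension $\binom{r-2}{i-1}$; here the correction term will appear precisely when $\partial$ is surjective. This is exactly where the regularity of $Y$ is used: the next term $\Tor_{i-1}(A_Y)_e$ lies in internal degree $(i-1)+(d-r+3)$, i.e.\ in Betti row $d-r+3 > m-1 = \reg(A_Y)$, and therefore vanishes. Hence $\partial$ is onto, the dimensions add, and $\beta_{i,d-r+2}(X) = \beta_{i,d-r+2}(Y) + \binom{r-2}{i-1}$. I expect this matching of the Koszul twist $-d+r-3$ against $\reg(A_Y)$, forcing the obstruction term beyond the top Betti row of $Y$, to be the only real point of the argument; the rest is degree bookkeeping.

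Finally, statement (b) follows formally from (a). The asserted equality $\beta_{i,d-r+2}(X) = \binom{r-2}{i-1}$ for all $i \geq 1$ holds if and only if $\beta_{i,d-r+2}(Y) = 0$ for all $i \geq 1$ --- and hence for all $i \geq 0$, since $\beta_{0,d-r+2}(Y) = 0$ trivially as $d-r+2 > 0$. On the other hand Theorem~\ref{4.14'' Theorem}~(b)(1) already gives $\beta_{i,j}(Y) = 0$ for all $j \geq d-r+3$, so the vanishing of $\beta_{i,d-r+2}(Y)$ for every $i$ is equivalent to $\reg(A_Y) \leq d-r+1$, that is, to $m = \reg(Y) \leq d-r+2$. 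This yields the stated equivalence.
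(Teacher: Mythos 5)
Your proposal is correct and follows essentially the same route as the paper: the same short exact sequence $0 \to (S/L)(-d+r-3) \to S/(I\cap L) \to S/I \to 0$, the same Koszul computation of the Betti numbers of $S/L$, the same degree bookkeeping in the long exact Tor sequence, and the same use of $\reg(S/(I\cap L)) \leq d-r+2$ to force surjectivity of the connecting map in row $j = d-r+2$. Part (b) is also derived from (a) exactly as in the paper's argument.
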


\begin{proof}
Let $I$ and $L$ respectively denote the homogeneous vanishing ideals of $X$ and $\mathbb{F}(X)$ in $S = \Bbbk[x_0,x_1,\ldots,x_r]$, so that $\beta_{i,j}(X) = \beta_{i,j}(S/I)$ and $\beta_{i,j}(Y) = \beta_{i,j}(S/I\cap L)$ for all $i,j \in \mathbb{N}$.\\
(a): The exact sequence
\begin{equation*}
0 \rightarrow (S/L)(-d+r-3) \rightarrow S/I\cap L \rightarrow S/I \rightarrow 0
\end{equation*}
used in the proof of Theorem \ref{4.14'' Theorem}  induces the following long exact sequence:
$$\Tor^S_i(\Bbbk,S/L)_{i+j-d+r-3} \rightarrow \Tor^S_i(\Bbbk,S/I \cap L)_{i+j} \rightarrow \Tor^S_i(\Bbbk,S/I)_{i+j}$$
$$\rightarrow\Tor^S_{i-1}(\Bbbk,S/L)_{(i-1)+j-d+r-2} \rightarrow \Tor^S_{i-1}(\Bbbk,S/I\cap L)_{(i-1)+j+1}$$
For all non-negative integers $k$ we have
$$\dim_K\big(\Tor^S_k(\Bbbk,S/L)_{k+l}\big) = \beta_{k,l}(S/L) = \begin{cases}
                                                     0 & \mbox{if $l \neq 0$}\\
                                                     \binom{r-2}{k} & \mbox{if $l = 0$}
\end{cases}$$
Therefore, the above long exact sequence makes us end up with isomorphisms
$$\Tor^S_i(\Bbbk,S/I \cap L)_{i+j} \cong \Tor^S_i(\Bbbk,S/I)_{i+j} \mbox{ for all } i \geq 1 \mbox{ and all } j \in \{1,2,\ldots, d-r+1\}.$$
As $\reg(S/I\cap L) = \reg(Y) - 1 = m-1$, we have $\beta_{i,j}(S/I\cap L) = 0 \mbox{ for all } i \geq 1 \mbox{ and all } j \geq m$.
So by the above isomorphisms we get the requested values of $\beta_{i,j}(S/I)$ for all $i \geq 1$ and all $j \in\{1,\ldots,d-r+1\}$.

As $\reg(S/I\cap L) = \reg(Y) -1 \leq d-r+2$ (see Theorem \ref{4.14'' Theorem}(b)(1)), the last module in the above exact sequences vanishes for $j=d-r+2$. So, our previous observation on the Betti numbers $\beta_{k,l}(S/L)$ yields a short exact sequence
\begin{equation*}
0\rightarrow \Tor^S_i(\Bbbk,S/I\cap L)_{i+d-r+2} \rightarrow \Tor^S_i(\Bbbk,S/I)_{i+d-r+2} \rightarrow \Bbbk^{\binom{r-2}{i-1}} \rightarrow 0 \mbox{ for all } i \geq 1,
\end{equation*}
which shows that $\beta_{i,d-r+2}(S/I) = \beta_{i,d-r+2}(S/I \cap L) + \binom{r-2}{i-1}$, and this proves our claim.\\
(b): As already said above, we have $m \leq d-r+3$ and hence $\beta_{i,j}(Y) = 0$ for all $i \geq 1$ and all $j \geq d-r+3$. Thus $m \leq d-r+2$ if and only if $\beta_{i,d-r+2} (Y)=0$ for all $i \geq 1$. Also by statement (a), the second condition holds if and only if $\beta_{i,d-r+2}(X) = \binom{r-2}{i-1}$ for all $i \geq 1$.
\end{proof}

\subsection*{Special extremal secant lines} In case of surfaces of type II, the special extremal secant locus is easily understood.
We also shall see that proper $3$-secant lines which meet $X$ only
in regular points are already special extremal lines and we shall
approximate the singular locus of $X$ by the singular locus of the
intersection of $X$ with the extremal $\mathbb{F}(X)$ plane of $X$.

\begin{proposition}
\label{prop:extseclinesII} Let the hypotheses be as in Convention
and Notation~\ref{convention and notation}. Then the following
statements hold:
\begin{itemize}
 \item[\rm{(a)}] The  image $\psi\big({}^*\Sigma(X)\big)$ of the special extremal locus ${}^*\Sigma(X)$ of $X$ under the Pl\"ucker embedding
$$\psi:\mathbb{G}(1,\mathbb{P}^r\big) \longrightarrow \mathbb{P}^{\binom{r+1}{2}-1}$$
is a plane.
\item[\rm{(b)}] Let $\mathbb{L} \in \mathbb{G}(1,\mathbb{P}^r)$ such that $\mathbb{L} \nsubseteq X.$ Then, the following statements are equivalent:
\begin{itemize}
               \item[\rm{(i)}]  $\mathbb{L} \in {}^*\Sigma(X);$
               \item[\rm{(ii)}] $\mathrm{length}(X \cap \mathbb{L}) \geq 3$ and $X \cap \mathbb{L} \subset \mathrm{Reg}(X).$
\end{itemize}
\item[{(c)}] $\mathrm{Sing}(X) = \{x \in \mathrm{Sing}\big(X \cap \mathbb{F}(X)\big) \mid \mathbb{F}(X) \subsetneq \mathrm{T}_x(X)\}$. In particular, each point $x \in \mathrm{Sing}\big(X \cap \mathbb{F}(X)\big)$ not contained in a line $\mathbb{L} \subset X \cap \mathbb{F}(X)$ is a singular point of $X$.
\end{itemize}
\end{proposition}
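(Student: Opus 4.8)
The plan is to obtain (a) from the dimension count that is already available, to read the set-theoretic shape of ${}^*\Sigma(X)$ off it, and then to run (b) and (c) through the standard normalization $\pi=\pi_\Lambda\colon\widetilde X=S(a,d-a)\twoheadrightarrow X$ of Corollary~\ref{corollary 2.7}, whose center is $\Lambda=\mathbb P^{d-r}$, whose scroll $\widetilde X$ is smooth (as $X$ is not a cone), and for which $\Sing(X)=\Sing(\pi)$ is finite. Write $\mathcal C:=X\cap\mathbb F$, $\mathbb L':=\overline{(\pi'_\Lambda)^{-1}(\mathbb L)}=\mathbb P^{d-r+2}$ and $\widetilde{\mathbb F}:=\overline{(\pi'_\Lambda)^{-1}(\mathbb F)}=\mathbb P^{d-r+3}\supseteq\Lambda$, so that $\widetilde X\cap\widetilde{\mathbb F}=\pi^{-1}(\mathcal C)$. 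For (a): by Notation and Reminder~\ref{4.2'' Notation and Reminder}(C)(b) the extremal variety $\mathbb F(X)=\overline{\bigcup_\Lambda\mathbb L_\Lambda}$ equals $\mathbb F$, so each special extremal secant line lies in $\mathbb F$ and ${}^*\Sigma(X)\subseteq\mathbb G(1,\mathbb F)$. Since $\mathbb G(1,\mathbb F)\cong\mathbb G(1,\mathbb P^2)$ is irreducible of dimension $2$ while $\dim{}^*\Sigma(X)=2$ by Proposition~\ref{prop:dimspecextrloc}, and ${}^*\Sigma(X)$ is closed, we get ${}^*\Sigma(X)=\mathbb G(1,\mathbb F)$; as the Pl\"ucker map carries the lines of a fixed plane to a linearly embedded $\mathbb P^2$, this is (a). In particular $\mathbb L\in{}^*\Sigma(X)\Leftrightarrow\mathbb L\subseteq\mathbb F$, which drives (b).

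For (c) the inclusion $\supseteq$ is immediate: if $x\in\Sing(\mathcal C)$ with $\mathbb F\subsetneq\mathrm{T}_x(X)$ then $\dim\mathrm{T}_x(X)\ge3>\dim X$, so $x\in\Sing(X)$ (note $x\in\mathcal C\subseteq X$). For $\subseteq$ I argue on the scroll: if $x\in\Sing(X)=\Sing(\pi)$ then $\length(\pi^{-1}(x))\ge2$ (Proposition~\ref{prop:loc,properties}(a)), so the fibre $\langle\Lambda,\tilde x\rangle=\mathbb P^{d-r+1}$ meets $\widetilde X$ in at least two points; hence $\widetilde X$ carries a secant line through a point of $\Lambda$. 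The structural input --- that such a line is forced into $\widetilde{\mathbb F}$ --- puts $\pi^{-1}(x)\subseteq\widetilde{\mathbb F}$, whence $x\in\pi(\widetilde{\mathbb F})=\mathbb F$, i.e. $x\in\mathcal C$. The two points of $\pi^{-1}(x)$ yield two branches of $\mathcal C$ at $x$, so $x\in\Sing(\mathcal C)$, and the images of their distinct tangent directions together with $\mathbb F$ span more than $\mathbb F$ inside $\mathrm{T}_x(X)$, giving $\mathbb F\subsetneq\mathrm{T}_x(X)$. The ``In particular'' clause follows by running this in reverse: a singular point of the plane curve $\mathcal C$ lying on no line-component carries two distinct branches, forcing $x\in\Sing(X)$.

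For (b), (i) means $\mathbb L\subseteq\mathbb F$ by (a). In the direction (i)$\Rightarrow$(ii), Lemma~\ref{4.12'' Lemma+}(a) gives $\length(X\cap\mathbb L)=d-r+3\ge3$ for every $\mathbb L\subseteq\mathbb F$ with $\mathbb L\not\subseteq X$, and the inclusion $X\cap\mathbb L\subseteq\Reg(X)$ is where the description of $\Sing(X)$ from (c) is combined with the scroll bound $\length(\widetilde X\cap\mathbb L')\le d-r+3$ recorded in the proof of Proposition~\ref{prop:loc,properties}(b). The substantial direction is (ii)$\Rightarrow$(i): given $\mathbb L\not\subseteq X$ with $\length(X\cap\mathbb L)\ge3$ and $X\cap\mathbb L\subseteq\Reg(X)$, the normalization $\pi$ is an isomorphism over $\Reg(X)=\CM(X)$ (Theorem~\ref{theorem 2.9}(b)), so $\widetilde X\cap\mathbb L'=\pi^{-1}(X\cap\mathbb L)$ is a finite scheme of length $\ge3$ mapping isomorphically onto $X\cap\mathbb L$. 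Its points therefore project to collinear points of $\mathbb L$, so their linear span meets $\Lambda$ (they cannot be collinear in $\mathbb P^{d+1}$, as $\mathbb L\not\subseteq X$); thus $\Lambda$ meets a plane spanned by three points of $\widetilde X$, and the same structural fact as in (c) confines that plane, hence $\widetilde X\cap\mathbb L'$, to $\widetilde{\mathbb F}$. Projecting back gives $\mathbb L\subseteq\mathbb F$, i.e. $\mathbb L\in{}^*\Sigma(X)$.

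The main obstacle is the structural fact shared by (c) and by (ii)$\Rightarrow$(i): that a linear subspace through the center $\Lambda$ which is multisecant to the smooth scroll $\widetilde X=S(a,d-a)$ is already multisecant to $\widetilde X\cap\widetilde{\mathbb F}=\pi^{-1}(\mathcal C)$ --- equivalently, that all the excess secant behaviour produced by the projection is confined to the preimage of the extremal plane. Establishing this requires controlling the position of $\Lambda$ relative to the secant and trisecant varieties of $S(a,d-a)$, using that $\widetilde X$ has minimal degree and is cut out by quadrics; it is precisely here that the type~II hypothesis (through the classification of \cite{BLPS1}) and the planarity of $\mathbb F$ are used in an essential way.
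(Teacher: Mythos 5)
Your part (a) is correct and is essentially the paper's own argument (containment in $\mathbb{G}(1,\mathbb{F})$ plus the dimension count from Proposition~\ref{prop:dimspecextrloc}). The problem is that everything else hinges on what you yourself label the ``main obstacle'': the assertion that a linear subspace through the center $\Lambda$ which is multisecant to $\widetilde{X}$ is already confined to $\widetilde{\mathbb{F}}$, i.e.\ multisecant to $\pi^{-1}(\mathcal{C})$. You never prove this; you only remark that it ``requires controlling the position of $\Lambda$ relative to the secant and trisecant varieties of $S(a,d-a)$''. Since this confinement claim is the entire mathematical content of both (b)(ii)$\Rightarrow$(i) and the inclusion $\mathrm{Sing}(X)\subseteq\{\ldots\}$ in (c), your proposal is a reduction of the Proposition to an unproved statement, not a proof. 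Moreover, in the unrestricted form in which you invoke it the claim is false: if $x\in\mathrm{Sing}(X)$, a general line joining $x$ to any other point of $X$ meets $X$ with length at least $\mathrm{mult}_x(X)+1\geq 3$, and such lines are certainly not confined to $\mathbb{F}$; any correct version must carry the hypothesis $X\cap\mathbb{L}\subseteq\mathrm{Reg}(X)$. But in (c) you apply ``the same structural fact'' precisely to fibres over \emph{singular} points, where that hypothesis fails and where the conclusion ($\mathrm{Sing}(X)\subseteq\mathbb{F}$) is exactly what has to be proved — so that step is circular.

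The paper needs no secant-variety analysis at all. For (ii)$\Rightarrow$(i) it lifts $\mathbb{L}$ to $\mathbb{L}'\subseteq\mathbb{P}^{d+1}$, takes a \emph{general} hyperplane $\mathbb{H}'\supseteq\mathbb{L}'$, so that $\widetilde{X}\cap\mathbb{H}'$ is a rational normal curve and $\mathbb{H}:=\pi'(\mathbb{H}'\setminus\Lambda)\in\mathcal{U}(X)$ contains $\mathbb{L}$; assuming $\mathbb{L}\neq\mathbb{L}_{\mathbb{H}}$ it then gets $\length\big(X\cap(\mathbb{L}\cup\mathbb{L}_{\mathbb{H}})\big)\geq d-r+s+3$ inside $\mathbb{V}=\langle\mathbb{L},\mathbb{L}_{\mathbb{H}}\rangle$ with $s=\dim\mathbb{V}$, contradicting the uniform length bound of Proposition~\ref{prop:loc,properties}(b); so $\mathbb{L}=\mathbb{L}_{\mathbb{H}}$. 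For (c) it first proves $x\in\mathbb{F}(X)$ via Proposition~\ref{prop:loc,properties}(c): each plane $\langle x,\mathbb{L}_{\mathbb{H}}\rangle$ meets $X$ in a curve, and $x\notin\mathbb{F}(X)$ would put infinitely many such curves into the $3$-space $\langle x,\mathbb{F}(X)\rangle$; then a general line $\mathbb{L}\subseteq\mathbb{F}$ through $x$ satisfies $\mathrm{mult}_x(\mathbb{L}\cap X)>1$, again by the bound of Proposition~\ref{prop:loc,properties}(b), which yields simultaneously $\mathbb{F}\subseteq\mathrm{T}_x(X)$ (strictly, as $x$ is singular) and $x\in\mathrm{Sing}\big(X\cap\mathbb{F}(X)\big)$. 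Two further flaws in your write-up: the ``two branches'' argument in (c) fails when the length-$\geq 2$ fibre of $\pi$ is a single non-reduced point; and your version of (i)$\Rightarrow$(ii) — that \emph{every} line of $\mathbb{F}$ not in $X$ meets only $\mathrm{Reg}(X)$ — cannot be literally true once one knows $\mathrm{Sing}(X)\subseteq\mathbb{F}$ (type II surfaces are singular): the implication really concerns the honest special extremal lines $\mathbb{L}_{\mathbb{H}}$, which avoid the finite singular locus because $\mathcal{C}_{\mathbb{H}}$ is smooth, a subtlety your closure-based reading of (i) glosses over.
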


\begin{proof} (a): As $X$ is of type II, we know that $\overline{\bigcup_{\mathbb{L} \in {}^*\Sigma(X)} \mathbb{L}} = \mathbb{F}(X) = \mathbb{P}^2 \subset \mathbb{P}^r$ is a plane, so that
${}^*\Sigma(X) = \mathbb{G}(1,\mathbb{F}(X)) = \mathbb{G}(1,\mathbb{P}^2)$. Standard arguments on Pl\"ucker embeddings show that $\psi\big(\mathbb{G}(1,\mathbb{P}^2)\big)$ is a plane in $\mathbb{P}^{\binom{r+1}{2}-1}$. \\
(b): The implication ``(i) $\Leftarrow $ (ii)" follows as
$\mathcal{C}_\mathbb{H} := X \cap \mathbb{H}$ is smooth for each
$\mathbb{H} \in \mathcal{U}(X)$ and hence can only contain
smooth points of $X$. \\
So, let $\mathbb{L} \in \Sigma_3(X)$ such that $X \cap \mathbb{L}$
is finite and contained in $\Reg(X)$, and assume that
$\mathbb{L}_\mathbb{H} \neq \mathbb{L}$ for all $\mathbb{H} \in
\mathcal{U}(X)$. We aim for a contradiction. Let $\pi =
\pi_\Lambda:\widetilde{X} \twoheadrightarrow X$ be the standard
normalization of $X$, induced by the linear projection $\pi' =
\pi'_\Lambda : \mathbb{P}^{d+1} \twoheadrightarrow \mathbb{P}^r$,
consider the closed preimage $\mathbb{L}' :=
\overline{\pi'^{-1}(\mathbb{L})} \in
\mathbb{G}(d-r+2,\mathbb{P}^{d+1}),$ of $\mathbb{L}$ and observe
that $\widetilde{X} \cap \mathbb{L}' = \pi^{-1}(X\cap\mathbb{L})$ is
finite. Let $\mathbb{H}' \in \mathbb{G}(d,\mathbb{P}^{d+1})$ be a
general hyperplane which contains the space $\mathbb{L}'$. If
$\widetilde{X}$ is not a cone, we may conclude by \cite[Remark 2.3
(B)]{BP2}, that the intersection $\widetilde{X} \cap \mathbb{H}'
\subset \mathbb{H}'$ is a rational normal curve. If $\widetilde{X}$
is a cone, the fact that $\mathbb{L}$ avoids the singular locus of
$X$ implies that $\mathbb{L}'$ does not contain the vertex of
$\widetilde{X}$ and we end up again with the conclusion that
$\widetilde{X} \cap \mathbb{H}' \subset \mathbb{H}'$ is a rational
normal curve. As $\mathbb{H}'$ is general, the hyperplane
$\mathbb{H} := \pi'(\mathbb{H}' \setminus \Lambda)
\in \mathbb{G}(r-1,\mathbb{P}^r)$ avoids the finite set $\mathrm{Sing}(\pi)$, and hence $\mathcal{C}_\mathbb{H} = X \cap \mathbb{H} = \pi(\widetilde{X}\cap\mathbb{H}') \subset \mathbb{P}^r$ is an integral curve, whence $\mathbb{H} \in \mathcal{U}(X).$\\
By our assumption we have $\mathbb{L} \neq \mathbb{L}_\mathbb{H}$, hence $\mathbb{V} := \langle\mathbb{L}_\mathbb{H},\mathbb{L}\rangle \in \mathbb{G}(s,\mathbb{H})$
with $s \in \{2,3\}$. In particular, the intersection $X \cap \mathbb{V}$ is finite. As $\mathbb{L} \neq \mathbb{L}_\mathbb{H}$, we have
$$\mathrm{length}(X \cap (\mathbb{L} \cup \mathbb{L}_\mathbb{H})) \geq \mathrm{length}(X \cap \mathbb{L}) + \mathrm{\length}(X \cap \mathbb{L}_\mathbb{H}) - \varepsilon$$
with $\varepsilon = 1$ if $\mathbb{L} \cap \mathbb{L}_\mathbb{H} \subset X$ and $\varepsilon = 0$ otherwise. In the first case, we have $s = 2$, so that always
$3-\varepsilon \geq s$. Therefore, we obtain
$$ \mathrm{\length}(X \cap (\mathbb{L} \cup \mathbb{L}_\mathbb{H})) \geq    \mathrm{length} (X \cap \mathbb{L}) +
\mathrm{length}(X\cap \mathbb{L}_\mathbb{H}) - \varepsilon \geq 3 + d-r+3 - \varepsilon \geq d-r+s+3.$$
As $\mathbb{L} \cup \mathbb{L}_\mathbb{H} \subset {\rm Reg}(X) \cap \mathbb{V}$ this contradicts Proposition~\ref{prop:loc,properties} (b).\\
(c): Let $x \in \mathrm{Sing}(X)$. Let $\mathbb{H} \in \mathcal{U}(X)$ such that $\mathbb{L}_{\mathbb{H}} \cap \mathrm{Sing}(X) = \emptyset$
and consider the plane $\mathbb{E}_{\mathbb{H}} := \langle x, \mathbb{L}_{\mathbb{H}} \rangle \subset \mathbb{P}^r$. Then, by Proposition~\ref{prop:loc,properties} (c), we have $\mathrm{dim}(X \cap \mathbb{E}_\mathbb{H}) = 1$.  If $x \notin \mathbb{F}(X)$, this would imply the contradiction that the intersection of $X$ with the three-space $\langle x, \mathbb{F}(X) \rangle$ contains infinitely many curves. Therefore $x \in \mathbb{F}(X)$.\\
Now, let $\mathbb{L} \subset \mathbb{F}(X)$ be a general line such
that $x \in \mathbb{L}$. Then, by Lemma~\ref{4.12'' Lemma+} (a) we
have $\mathrm{Sing}(X) \cap \mathbb{L} = \{x\}$ and $\length(X \cap
\mathbb{L}) = d-r+3$. Assume, that $\mathrm{mult}_x(\mathbb{L} \cap
X) = 1,$ so that $\mathrm{length}(\mathrm{Reg}(X) \cap \mathbb{L}) =
\mathrm{length}(X \cap \mathbb{L}) - 1 = d-r+2$. By
Proposition~\ref{prop:loc,properties} (b) it follows that $d-r+4 =
d-r+2+2 \leq d-r+1+2 = d-r+3,$ and this contradiction shows that
$\mathrm{mult}_x(\mathbb{L} \cap X) > 1$. This first shows that
$\mathbb{L}$ is a tangent line to $X$ in $x$, and hence proves that
$\mathbb{F}(X) \subseteq \mathrm{T}_x(X)$. As $x \in
\mathrm{Sing}(X)$, the inclusion is strict. As
$\mathrm{mult}_x\big(\mathbb{L} \cap (X \cap \mathbb{F}(X)\big) =
\mathrm{mult}_x(\mathbb{L} \cap X) >1$ it also follows that a
general line $\mathbb{L} \subset \mathbb{F}(X)$ which runs through
$x,$ is tangent to $X \cap \mathbb{F}(X)$ in $x$, so that $x \in
\mathrm{Sing}\big(X\cap \mathbb{F}(X)\big).$ This proves the
inclusion ''$\subseteq$'' between the two sets in question. As the
converse inclusion is obvious, we get the requested equality. The
additional claim now follows easily, as $X$ is a union of lines.
\end{proof}

\section{The index of normality of $X$}

\subsection*{Index of normality and extremal planes} Our next main result is devoted to the study of the relations among the index of normality $N(X)$, the Betti numbers $\beta_{i,j}(X)$ and the nature of the union
$X \cup \mathbb{F}(X)$, where $X$ is a surface of maximal sectional regularity which is of type II. We begin with two auxiliary results.

\begin{lemma}\label{4.16'''' Lemma} Let the notations and hypotheses be as in Convention and Notation~\ref{convention and notation}. Assume that $X$ is not a cone. Then we have the following statements
\begin{itemize}
\item[\rm{(a)}] ${\rm Soc}(H^1(S/I))(-r-1) \cong \Tor^S_r(\Bbbk,S/I)$.
\item[\rm{(b)}] If $\depth(X) = 1$, then $H^1\big(\mathbb{P}^r,\mathcal{I}_X(N(X))\big) \cong \Tor^S_r(\Bbbk,S/I)_{N(X)+r+1}.$
\item[\rm{(c)}] $N(X) \leq d-r$ if and only if $\beta_{r,d-r+2}(X) = 0$.
\end{itemize}
\end{lemma}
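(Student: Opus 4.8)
The plan is to prove the three statements by local-duality arguments, obtaining all of them essentially as consequences of the standard graded-duality relationship between the top Koszul homology of $S/I$ and the socle of the deficiency modules.

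For statement (a), I would invoke graded local duality. Since $\dim(X)=2$, the variety sits in $\mathbb{P}^r$ with $r$ homogeneous coordinates $x_0,\ldots,x_r$, so $S$ is a polynomial ring in $r+1$ variables and $A = S/I$ has dimension $3$. The idea is that $\Tor^S_r(\Bbbk,S/I)$ records the penultimate syzygy, and by the self-duality of the Koszul complex resolving $\Bbbk$ over $S$ one has, for every finitely generated graded $S$-module $M$,
\[
\Tor^S_i(\Bbbk,M)^\vee \cong \Tor^S_{r+1-i}(\Bbbk, M^\vee)(-r-1)
\]
up to the appropriate twist by $S(-r-1)$ coming from the canonical module of $S$. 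Applying this with $M = S/I$ and $i=r$, and recalling from Reminder~\ref{reminder.def.mod} that the Matlis dual $H^1(S/I)^\vee = K^1(A) = \Ext^r_S(S/I,S(-r-1))$ governs the bottom Koszul homology, the term $\Tor^S_1(\Bbbk,K^1(A))$ collapses (since $S_+$ annihilates a socle element) to $\mathrm{Soc}(H^1(S/I))^\vee$. Tracking the internal degree shift carefully yields the stated isomorphism $\mathrm{Soc}(H^1(S/I))(-r-1)\cong \Tor^S_r(\Bbbk,S/I)$. The one point requiring care is keeping the grading shift by $-r-1$ consistent, since a sign error in the twist is the easiest slip here.

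For statement (b), assume $\depth(X) = 1$, so that $H^1(S/I)$ is a module of depth $0$ at the irrelevant maximal ideal and hence has nonzero socle concentrated in its top degree. By the definition of the index of normality $N(X)$ we have $H^1(\mathbb{P}^r,\mathcal{I}_X(j)) = H^1(S/I)_j$ vanishing for $j > N(X)$ and nonzero at $j = N(X)$; thus the top degree of $H^1(S/I)$ is exactly $N(X)$, and in that degree the module coincides with its socle. Feeding this through statement (a), the component of $\Tor^S_r(\Bbbk,S/I)$ in internal degree $N(X)+r+1$ is precisely $\mathrm{Soc}(H^1(S/I))_{N(X)}$, which equals $H^1(S/I)_{N(X)} = H^1(\mathbb{P}^r,\mathcal{I}_X(N(X)))$. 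This gives the claimed isomorphism.

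For statement (c), I would argue that $N(X) \leq d-r$ is equivalent to $H^1(\mathbb{P}^r,\mathcal{I}_X(j)) = 0$ for all $j \geq d-r+1$, which by the socle computation translates into the vanishing of $\Tor^S_r(\Bbbk,S/I)$ in internal degrees $\geq (d-r+1)+r+1 = d+2$, i.e.\ of the Betti numbers $\beta_{r,j}(X)$ for $j \geq d-r+2$. The crux is to show that the only surviving such Betti number could be $\beta_{r,d-r+2}(X)$. For this I would use $\reg(X) = d-r+3$ from Theorem~\ref{4.14'' Theorem}(a)(1), which already forces $\beta_{r,j}(X) = 0$ for $j \geq d-r+3$; hence the whole tail above degree $d-r+2$ vanishes automatically, and the condition $N(X) \leq d-r$ reduces to the single statement $\beta_{r,d-r+2}(X) = 0$. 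The \emph{main obstacle} I anticipate is handling the boundary degree $j = N(X)$ cleanly in statement (b): one must know that $\depth(X)=1$ really places a genuine socle element in the top degree of $H^1$ rather than merely bounding it, and that the dualities align the extremal cohomological degree with the extremal Betti degree without an off-by-one error in the twist. Once the duality of statement (a) is firmly established with the correct shift, statements (b) and (c) follow with little additional work.
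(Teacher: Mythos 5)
Your parts (b) and (c) follow the same route as the paper and are correct once (a) is available: since $H^1(S/I)$ is an Artinian graded module, its top nonvanishing degree $N(X)$ satisfies $H^1(S/I)_{N(X)}=\mathrm{Soc}\big(H^1(S/I)\big)_{N(X)}$, and the bound $\reg(X)=d-r+3$ kills $\beta_{r,j}(X)$ for $j\geq d-r+3$, so everything collapses to the single Betti number $\beta_{r,d-r+2}(X)$. (One wording slip: the socle of $H^1(S/I)$ need not be \emph{concentrated} in the top degree; what you actually use -- that the top-degree component is contained in, hence equal to, the socle in that degree -- is the true statement.)

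The genuine gap is in (a), which is the foundation of the whole lemma. The Koszul self-duality formula you invoke, applied with $M=S/I$ and $i=r$, produces $\Tor^S_r(\Bbbk,S/I)^\vee \cong \Tor^S_1\big(\Bbbk,(S/I)^\vee\big)$ up to a twist, where $(S/I)^\vee$ is the graded Matlis dual of the \emph{entire coordinate ring} $A=S/I$ -- a non-finitely generated module that is not $K^1(A)$. Your next step silently replaces $(S/I)^\vee$ by $K^1(A)=H^1(S/I)^\vee$; local duality identifies $K^1(A)$ with $\Ext^{r}_S(A,S(-r-1))$, not with $A^\vee$, so this substitution is unjustified. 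Moreover the asserted ``collapse'' is misindexed: the fact that Matlis duality exchanges socles with minimal generators gives $\Tor^S_0(\Bbbk,K^1(A)) = K^1(A)\otimes_S\Bbbk \cong \mathrm{Soc}\big(H^1(S/I)\big)^\vee$, not a statement about $\Tor^S_1$; and to connect $K^1(A)\otimes_S\Bbbk$ with $\Tor^S_r(\Bbbk,S/I)^\vee(-r-1)$ one must use minimality of the free resolution $F_\bullet$ of $A$ (from $\depth(A)=1$ one gets $\mathrm{pd}_S(A)=r$, so $K^1(A)$ is the cokernel of the last dualized differential, which vanishes modulo $S_+$ by minimality, whence $K^1(A)\otimes_S\Bbbk \cong \Hom_S(F_r,S(-r-1))\otimes_S\Bbbk$). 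None of these steps appears in your text, so as written the bridge from duality to the socle isomorphism is missing. For comparison, the paper avoids duality entirely: it takes the exact sequence $0\to S/I \to D \to H^1(S/I)\to 0$ with $D=D_{S_+}(S/I)$, observes $\depth(D)\geq 2$ so that $H_{r+1}(\underline{x};D)=H_r(\underline{x};D)=0$ for $\underline{x}=x_0,\ldots,x_r$, and reads off $\Tor^S_r(\Bbbk,S/I)\cong H_{r+1}\big(\underline{x};H^1(S/I)\big)=\mathrm{Soc}\big(H^1(S/I)\big)(-r-1)$ from the long exact sequence of Koszul homology. Either complete your duality route with the minimal-resolution argument sketched above, or switch to this more elementary argument; with (a) repaired, your (b) and (c) stand.
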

\begin{proof} (a): If $\depth(X) > 1$, both of the occurring modules vanish and our claim is obvious. So, we assume that $\depth(X) = 1$ and consider the total ring of
sections $D := D_{S_+}(S/I) = \bigoplus_{n \in \mathbb{Z}} H^0(\mathbb{P}^r, \mathcal{O}_X(n))$ of $X$, as well as the short exact sequence
$$0 \longrightarrow S/I \longrightarrow D \longrightarrow H^1(S/I) \longrightarrow 0.$$
We apply the Koszul functor $K(\underline{x}; \bullet)$ with respect to $\underline{x} := x_0,x_1,\ldots,x_r$ to this sequence and end up in homology with
an exact sequence
$$H_{r+1}(\underline{x};D) \rightarrow H_{r+1}(\underline{x};H^1(S/I)) \rightarrow H_r(\underline{x}; S/I) \rightarrow H_r(\underline{x}; D).$$
As $\depth(D) > 1$ the first and the last module in this sequence vanish, so that
$$H_{r+1}(\underline{x};H^1(S/I)) \cong H_r(\underline{x}; S/I).$$
As the Koszul complex $K(\underline{x},S)$ provides a free resolution of $\Bbbk = S/S_+$ and $K(\underline{x}; S/I) \cong K(\underline{x}; S) \otimes_S S/I$ we have
$H_r(\underline{x}; S/I) \cong \Tor^S_r(\Bbbk,S/I)$. As the sequence $\underline{x}$ has length $r+1$, we have $H_{r+1}(\underline{x}; H^1(S/I)) \cong
{\rm Soc}(H^1(S/I))(-r-1)$. Altogether, we now obtain the requested statement (a).\\
(b): As $N(X) = {\rm end}(H^1(S/I))$, we have
$$H^1\big(\mathbb{P}^r,\mathcal{I}_X(N(X))\big) \cong H^1(S/I)_{N(X)} = {\rm Soc}(H^1(S/I))_{N(X)}.$$
Now, our claim follows immediately by statement (a).\\
(c): If $\depth(X) > 1$ we have $N(X) = -\infty$ and $\beta_{r,d-r+2}(X) = 0$, so that our claim is true. We thus may assume that $\depth(X) = 1$.
As $\reg(X) = d-r+3$ we have $N(X) \leq d-r+1$ and $\Tor^S_r(\Bbbk,S/I)_{r + l} = 0$ for all $l \geq d-r+3$. Now, we may conclude by statement (b).
\end{proof}

Now, we are ready to give the announced main result.

\begin{theorem}\label{4.17'' Proposition} Let the notations and hypotheses be as in Convention and Notation~\ref{convention and notation}. Assume that $X$ is not a cone. Then
\begin{itemize}
\item[\rm{(a)}] The following statements are equivalent:
                \begin{itemize}
                \item[\rm{(i)}] $N(X) \leq d-r$.
                \item[\rm{(ii)}] $\reg(X \cup \mathbb{F}(X)) \leq d-r+2$.
                \item[\rm{(iii)}] $\beta_{i,d-r+2}(X) = \binom{r-2}{i-1}$ for all $i \geq 1$.
                \item[\rm{(iv)}] $\beta_{r,d-r+2}(X) = 0$.
                \end{itemize}
\item[\rm{(b)}] The following statements are equivalent:
               \begin{itemize}
               \item[\rm{(i)}]  $\beta_{1,d-r+2}(X) = 1$.
               \item[\rm{(ii)}] $I\cap L = (I_{\leq d-r+2})$, where $I$ and $L$ are the homogeneous vanishing ideals of $X$
                                respectively of $\mathbb{F}(X)$ in $S$.
               \end{itemize}
\end{itemize}
\end{theorem}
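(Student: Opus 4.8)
The plan is to derive both parts from three earlier results: the cohomological comparison between $X$ and $Y := X \cup \mathbb{F}(X)$ in Theorem~\ref{4.14'' Theorem}, the Betti-number comparison in Proposition~\ref{prop:BettiNumbers}, and the socle/Tor computation in Lemma~\ref{4.16'''' Lemma}. Throughout, $Y$ has saturated homogeneous ideal $I\cap L$, so $\depth(S/I\cap L)\geq 1$ and $\reg(Y)=\reg(S/I\cap L)+1$.

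For part (a), two of the equivalences are available directly: Lemma~\ref{4.16'''' Lemma}(c) is exactly (i)$\Leftrightarrow$(iv), and Proposition~\ref{prop:BettiNumbers}(b) (with $m=\reg(Y)$) is exactly (ii)$\Leftrightarrow$(iii). The implication (iii)$\Rightarrow$(iv) is then immediate, since putting $i=r$ in (iii) gives $\beta_{r,d-r+2}(X)=\binom{r-2}{r-1}=0$. Hence it suffices to prove (i)$\Rightarrow$(ii): the cycle (iii)$\Rightarrow$(iv)$\Leftrightarrow$(i)$\Rightarrow$(ii)$\Leftrightarrow$(iii) then closes, and all four conditions are equivalent.

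The one step with genuine content, and the step I expect to be the main obstacle, is to bound $\reg(Y)$ through its local cohomology. I would read off $\en\big(H^i(S/I\cap L)\big)$ for $i=1,2,3$: the isomorphism of Theorem~\ref{4.14'' Theorem}(b)(2) together with the definition of $N(X)$ gives $\en\big(H^1(S/I\cap L)\big)=N(X)$; Theorem~\ref{4.14'' Theorem}(b)(3) and (b)(5) force $h^2(\mathbb{P}^r,\mathcal{I}_Y(j))=0$ for all $j\geq d-r$, so $\en\big(H^2(S/I\cap L)\big)\leq d-r-1$; and Theorem~\ref{4.14'' Theorem}(b)(6) gives $\en\big(H^3(S/I\cap L)\big)\leq -1$. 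Since $H^0(S/I\cap L)=0$ and $d-r\geq 1$, this yields
$$\reg(Y)=\max\bigl\{N(X)+2,\ \en(H^2(S/I\cap L))+3,\ \en(H^3(S/I\cap L))+4\bigr\}\leq\max\{N(X)+2,\ d-r+2\}.$$
If $N(X)\leq d-r$ the right-hand side is $d-r+2$, giving (i)$\Rightarrow$(ii). The same display in fact yields the converse: as $\reg(X)=d-r+3$ forces $N(X)\leq d-r+1$, the only alternative $N(X)=d-r+1$ makes $\reg(Y)\geq N(X)+2=d-r+3$, so part (a) may equally be packaged as the single equivalence $\reg(Y)\leq d-r+2\Leftrightarrow N(X)\leq d-r$.

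For part (b), I would apply Proposition~\ref{prop:BettiNumbers}(a) in homological degree $i=1$, where $\binom{r-2}{0}=1$, to obtain $\beta_{1,d-r+2}(X)=\beta_{1,d-r+2}(Y)+1$; thus (i) is equivalent to $\beta_{1,d-r+2}(Y)=0$, i.e. to $I\cap L$ having no minimal generator in degree $d-r+3$. Because $\reg(Y)\leq d-r+3$ by Theorem~\ref{4.14'' Theorem}(b)(1), the ideal $I\cap L$ is generated in degrees $\leq d-r+3$, so this vanishing is equivalent to $I\cap L$ being generated in degrees $\leq d-r+2$, that is to $I\cap L=\bigl((I\cap L)_{\leq d-r+2}\bigr)$. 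Finally, the inclusion $I_{\leq d-r+2}\subseteq L$ established inside the proof of Lemma~\ref{4.12'' Lemma+}(b) gives $I_{\leq d-r+2}\subseteq I\cap L$, and together with $I\cap L\subseteq I$ this forces $(I\cap L)_{\leq d-r+2}=I_{\leq d-r+2}$; hence the last condition reads $I\cap L=(I_{\leq d-r+2})$, which is (ii). The only delicate point here is this identification of the low-degree pieces of $I$ and $I\cap L$; everything else is a direct invocation of the cited statements.
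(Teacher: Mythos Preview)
Your proof is correct and follows essentially the same route as the paper: both parts are reduced to Theorem~\ref{4.14'' Theorem}, Proposition~\ref{prop:BettiNumbers}, and Lemma~\ref{4.16'''' Lemma}, with the same local-cohomology computation for (i)$\Rightarrow$(ii) and the same use of $\beta_{1,d-r+2}(X)=\beta_{1,d-r+2}(Y)+1$ for part~(b). Your organization is slightly cleaner---you close the cycle via the trivial implication (iii)$\Rightarrow$(iv) obtained by setting $i=r$, and you make explicit the identification $(I\cap L)_{\leq d-r+2}=I_{\leq d-r+2}$ that the paper leaves implicit---but the substance is the same.
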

\begin{proof}
(a): (i) $\Rightarrow$ (ii): Let $N(X) \leq d-r$ and $I$ and $L
\subset S =\Bbbk[x_0,x_1,\ldots,x_r]$ respectively denote the
homogeneous vanishing ideals of $X$ and of $\mathbb{F}(X)$.
According to Theorem~\ref{4.14'' Theorem} (b)(1) we have ${\rm
end}(H^1(S/I\cap L) = {\rm end}(H^1(S/I))= N(X) \leq d-r$. So, it
follows by Theorem~\ref{4.14'' Theorem} (b)(3) and (6) that
$\reg(S/I \cap L) \leq d-r+1$, whence $\reg(X) \cup \mathbb{F}(X) =
\reg(I \cap L)
\leq d-r+2$.\\
(ii) $\Rightarrow$ (i): As ${\rm end}(H^1(S/I)) = N(X)$, this is an easy consequence of Theorem~\ref{4.14'' Theorem} (b)(2).\\
(ii) $\Rightarrow$ (iii): This follows by Propsition~\ref{prop:BettiNumbers}.\\
(iii) $\Rightarrow$ (ii): Assume that statement (iii) holds. Then we have in particular that $\beta_{1,d-r+2}(X) = 1$. Now, we may again conclude by Proposition~\ref{prop:BettiNumbers}.\\
(iii) $\Leftrightarrow$ (iv): This is clear by Lemma~\ref{4.16'''' Lemma}.\\
(b): (i) $\Rightarrow $ (ii): Assume that $\beta_{1,d-r+2}(X) = 1$. Then, it follows by Proposition~\ref{prop:BettiNumbers} (a) that $(I_{\leq d-r+2}) = I \cap L$.\\
(ii)$\Rightarrow$(i): This also follows immediately by Proposition~\ref{prop:BettiNumbers} (a).
\end{proof}

\begin{remark}\label{remark:normality} (A) The extremal plane $\mathbb{F}(X)$ of a surface $X$ of type II which satisfies $N(X) \leq d-r$ has some nice properties.
We namely can say that the equivalent properties (i),(ii) of
Theorem~\ref{4.17'' Proposition} (b) imply the following statements,
in which, for all $m \in \mathbb{N}$, we use
$$\mathrm{Sec}_m(X) := \bigcup_{\mathbb{L} \in \mathbb{G}(1,\mathbb{P}^r) : \mathrm{length}(X \cap \mathbb{L})\geq m} \mathbb{L}$$
to denote the $m$-\textit{th secant variety} of $X$.
\begin{itemize}
\item[\rm{(1)}] If $\mathbb{L} \in \Sigma(X)$, then $\mathbb{L} \subset X \cup \mathbb{F}(X),$
\item[\rm{(2)}] $\mathrm{Sec}_{d-r+3}(X) = X \cup \mathbb{F}(X),$
\item[\rm{(3)}] $\mathbb{F}^+(X) = \mathbb{F}(X).$
\end{itemize}
Indeed, assume that the equivalent statements (i) and (ii) of Theorem~\ref{4.17'' Proposition}(b) hold. Let $\mathbb{L} \in \mathbb{G}(1,\mathbb{P}^r)$
such that $d-r+3 \leq \mathrm{length}(X \cap \mathbb{L}) < \infty.$ Let $M \subset S$ be the homogeneous vanishing ideal of $\mathbb{L}$. Then, $(I_{d-r+2}) \subset M$. As $I\cap L = (I_{\leq d-r+2})$ it follows that $I \cap L \subset M$. As $\mathbb{L}$ is not contained in $X$, the ideal $I$ is not contained in $M$. It follows that $L \subset M$, and hence that $\mathbb{L} \subset \mathbb{F}(X)$. As $\Sigma(X)$ is the closure of all lines $\mathbb{L}$ as above, this proves claim (1). \\
Claims (2) and (3) are immediate by claim (1), as $\mathbb{F}(X)$ is a union of lines and each line $\mathbb{L} \subset \mathbb{P}^r$ with $\mathrm{length}(X \cap \mathbb{L}) > d-r+3$ is contained in $X$.\\
(B) Observe that statement (iii) of Theorem~\ref{4.17'' Proposition} (a) implies that $\beta_{1,d-r+2}(X) = 1$.
So, the equivalent statements (a) (i)--(iv) imply the equivalent statements (b)(i) and (b)(ii) of this theorem.\\
(C) We have seen above, that surfaces $X$ of type II  and
sub-maximal index of normality behave nicely. We therefore can
expect, that in the extremal case $N(X) = -\infty$ -- hence in the
case where $\depth(X) = 2$ -- we get even more detailed information
on the Betti numbers if $X$ is of ``small degree".
\end{remark}

\subsection*{Surfaces of degree $r+1$ in $\mathbb{P}^r$} We now briefly revisit the special case of surfaces $X \subset \mathbb{P}^r$ of degree $r+1$.

\begin{remark} \label{3.3' Remark} (s. \cite{B2}, \cite{BS6})
(A) Assume that $r \geq 5$ and let our surface $X \subset \mathbb{P}^r$ be of degree $r+1$. Then,
we can distinguish $9$ cases, which show up by their numerical invariants
as presented in the following table. Here $\sigma(X)$ denotes the \textit{sectional
genus} of $X$, that is the arithmetic genus of the generic hyperplane section curve $\mathcal{C}_h \quad
(h \in \mathbb{U}(X))$ or equivalently, the sectional genus of the polarized surface
$(X, \mathcal{O}_X(1))$ in the sense of Fujita \cite{Fu}. Moreover $\mathrm{sreg}(X)$ denotes the sectional regularity introduced in Remark and Definition~\ref{remark and definition 0}.

\[
\begin{tabular}{| c | c | c | c | c | c | c |}
    \hline
    $\rm{Case}$ & $\sreg(X)$ & $\depth(X)$
    & $\sigma(X)$ & $\e(X)$ & $h^1_A(1)$ & $h^1_A(2)$  \\ \hline
    $1$ &$2$ &$3$ &$2$ &$0$ &$0$ &$0$ \\ \hline
    $2$ &$3$ &$2$ &$1$ &$0$ &$0$ &$0$ \\ \hline
    $3$ &$3$ &$2$ &$1$ &$1$ &$0$ &$0$ \\ \hline
    $4$ &$3$ &$1$ &$1$ &$0$ &$1$ &$0$ \\ \hline
    $5$ &$3$ &$2$ &$0$ &$2$ &$0$ &$0$ \\ \hline
    $6$ &$3$ &$1$ &$0$ &$1$ &$1$ &$\leq 1$ \\ \hline
    $7$ &$3$ &$1$ &$0$ &$0$ &$2$ &$\leq 2$ \\ \hline
    $8$ &$4$ &$2$ &$0$ &$3$ &$0$ &$0$ \\ \hline
    $9$ &$4$ &$1$ &$0$ &$0$ &$2$ &$3$  \\ \hline
    \end{tabular}
\]
\\
The case $9$ occurs only if $r = 5$. In \cite{B2} and \cite{BS6} we listed indeed two more cases $10$ and $11$, of which we did not know at that time, whether they might
occur at all. For these two cases we had $\sreg(X) = 4 = d-r+3$ and ${\rm e}(X) \in \{1,2\}$. As these surfaces would be of maximal sectional regularity, this would
contradict Theorem~\ref{t1-coh} (b) and Theorem~\ref{4.14'' Theorem} (a). So, surfaces which fall under the cases $10$ and $11$ cannot occur at all. In the case $9$ we have ${\rm e}(X) = 0$, and hence by Theorem~\ref{4.14'' Theorem} (b)(3) the surface $X$ is of type I in this case. \\
(B) The surfaces of types 8 and 9 are of particular
interest, as they are the ones of maximal sectional regularity within all the 9 listed types.
Observe, that among all surfaces $X$ of degree $r+1$ in $\mathbb{P}^r$, those of type
8 are precisely the ones $X$ which are of maximal sectional regularity and of arithmetic depth
$\geq 2$. If $r \geq 6$, the surfaces of type 8 are precisely the ones which are of maximal sectional regularity.\\
(C) Observe, that in the cases 5 -- 9 we have $\sigma(X) = 0$. This
means, that the surfaces which fall under these 5 types are all
sectionally rational and have finite non-normal locus. So, by
Theorem 4.1 in \cite{BLPS1}, these surfaces are almost non-singular
projections of a rational normal surface scroll $\widetilde{X} =
S(a,r+1-a)$ with $0 \leq a \leq \frac{r+1}{2}$, even if they are
cones (see \cite[Corollary 5.11]{BS5} for the non-conic case). So,
according to Theorem~\ref{2.5' Theorem} (b) the surfaces $X$ of
types 5 -- 9 all satisfy the Eisenbud-Goto inequality $\reg(X) \leq
4$, with equality in the cases 8 and 9 (see Theorem~ \ref{4.14''
Theorem} (a)(1) and Theorem~\ref{t1-betti} ). In the cases 1 -- 5,
the values of $h^i(\mathbb{P}^r, \mathcal{I}_X(n)) =: h^i(S/I)_n
\quad (i=1,2, n\in \mathbb{Z})$ (see \cite[Reminder 2.2 (C) and
(D)]{BS6}) show, that $\reg(X) = 3$. In the case 6 we may have
$\reg(X) = 3$ whereas in the case 7, we know even that $\reg(X)$ may
take both values $3$ and $4$ (see \cite[Example 3.5, Examples 3.4
(A),(B) and (C)]{BS6}). This shows in particular, that there are
sectionally rational surfaces $X \subset \mathbb{P}^r$ of degree
$r+1$ with finite non-normal locus and $\sreg(X) < \reg(X)$.
\end{remark}

\begin{corollary} \label{3.5' Corollary}
Assume that the surface $X \subset \mathbb{P}^r_K$ is of degree $r+1$. Then, the following statements are equivalent

               \begin{itemize}
               \item[\rm{(i)}] The surface $X$ is of type 8.
               \item[\rm{(ii)}] $\e(X) = 3$.
               \item[\rm{(iii)}] $\sreg(X) = 4$ and $\depth(X) =2$.
               \item[\rm{(iv)}] $\sreg(X) = 4$ and $X$ does not fall under the case 9 of Remark~\ref{3.3' Remark}.
               \end{itemize}

\end{corollary}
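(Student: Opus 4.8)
The plan is to deduce everything from the classification table of Remark~\ref{3.3' Remark}~(A), once we observe that for a surface of degree $d = r+1$ being of maximal sectional regularity means precisely $\sreg(X) = d-r+3 = 4$ (see Remark and Definition~\ref{remark and definition 0}~(B)). Scanning the $\sreg$-column of the table, the surfaces with $\sreg(X) = 4$ are exactly those falling under case $8$ or case $9$, whereas every other case has $\sreg(X) \leq 3$. With this single observation in hand, I would establish the four equivalences by a short cycle of implications, each time merely reading off the relevant columns of the table.

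First I would treat the equivalences (i) $\Leftrightarrow$ (iii) $\Leftrightarrow$ (iv). If $X$ is of type $8$, then the table gives $\sreg(X) = 4$ and $\depth(X) = 2$, which is (iii); and since case $9$ has $\depth(X) = 1$, the condition $\depth(X)=2$ in (iii) says in particular that $X$ does not fall under case $9$, giving (iv). Conversely, (iv) together with the observation above forces $X$ to be of case $8$, as the only other possibility $\sreg(X)=4$ allows, namely case $9$, has been excluded; this yields (i), and the implication (i) $\Rightarrow$ (iii) has already been recorded. Hence (i), (iii) and (iv) are mutually equivalent.

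Next I would close the loop through (ii). The implication (i) $\Rightarrow$ (ii) is immediate, since case $8$ has $\e(X) = 3$ by the table. For (ii) $\Rightarrow$ (i) I would inspect the $\e$-column: the value $3$ is attained in none of the cases $1$--$7$ and $9$, where $\e(X) \in \{0,1,2\}$, and occurs only in case $8$. Therefore $\e(X) = 3$ forces $X$ to be of type $8$, and all four statements are seen to be equivalent.

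The argument is entirely bookkeeping against an already-established table, so there is no genuine analytic obstacle; the only point requiring care is the \emph{completeness} of the table, that is, that the spurious cases $10$ and $11$ of \cite{B2} and \cite{BS6} do not occur. This is exactly what was argued in Remark~\ref{3.3' Remark}~(A) by means of Theorem~\ref{t1-coh}~(b) and Theorem~\ref{4.14'' Theorem}~(a): no surface of degree $r+1$ with $\sreg(X) = 4$ can escape the two genuine cases $8$ and $9$. Granting this, the case analysis above is exhaustive and the corollary follows.
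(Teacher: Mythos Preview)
Your proof is correct and follows exactly the same approach as the paper, which simply states that the result follows easily on use of the table in Remark~\ref{3.3' Remark}~(A). You have merely spelled out the table-reading in detail, including the point about the non-occurrence of cases $10$ and $11$, which the paper also established in that remark.
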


\begin{proof}
This follows easily on use of the table in Remark~\ref{3.3' Remark} (A).
\end{proof}

\section{Examples and Problems}

\subsection*{Surfaces of extremal regularity with small extremal secant locus } As announced already in the Introduction, we now shall present a construction, which allows to provide examples of surfaces of extremal regularity whose extremal secant variety is of dimension $-1,0,$ or $1$. These surfaces are in particular not of maximal sectional regularity. We already have spelled out the meaning of such examples in relation what is said about varieties of extremal regularity in \cite{GruLPe}.

\begin{construction and examples}\label{7.1 Construction and Examples} Let $a,b,d \in \mathbb{N}$ with $a \leq b$, let $r:=a+b+3$, assume that $d > r$ and
consider the smooth threefold rational normal scroll of degree $a+b+1 = r-2$
\begin{equation*}
Z := S(1,a,b) \subset \mathbb{P}^r.
\end{equation*}
Let $H, F \in {\rm Div}(Z)$ respectively be a hyperplane section and a ruling plane of $Z$, so that each divisor on
$Z$ is linearly equivalent to $mH+nF$ for some integers $m,n$. Let $X \subset \mathbb{P}^r$ be an non-degenerate irreducible
surface of degree $d$ which is contained in $Z$ as a divisor linearly equivalent to $H+(d-r+2)F$.

(A) One can easily see that $h^0 (X, \mathcal{O}_X (1)) = h^0 (Z, \mathcal{O}_Z (1))+d-r+1 = d+2$. This means that the linearly normal embedding $\widetilde{X} \subset \mathbb{P}^{d+1}$ of $X$ by means of $\mathcal{O}_X (1)$ is of minimal degree and $X$ is a regular projection of $\widetilde{X}$. Keep in mind, that $\widetilde{X}$ is either a smooth rational normal surface scroll, a cone over a rational normal curve or the Veronese surface in $\mathbb{P}^5$. As $d+1 > 5$, and as a cone does not admit a proper isomorphic linear projection, $\widetilde{X} \subset \mathbb{P}^{d+1}$ is a smooth rational normal surface scroll. This means that $X$ is smooth and sectionally rational. Also $\reg (X) = d-r+3$ (cf. \cite[Theorem 4.3]{P}) and hence $X$ is a surface of extremal regularity.

(B) Let $\mathbb{L}$ be a line section of $Z$. Then the intersection number $\mathbb{L}\cdot X$ takes the maximal possible value $d-r+3$.
This means that either $\mathbb{L}$ is contained in $X$ or else it is a proper $(d-r+3)$-secant line to $X$.
On the other hand, observe that any proper $(d-r+3)$-secant line to $X$ must be contained in $Z$ as a line section, since $Z$ is cut out by quadrics. \\
To reformulate this observation, we introduce the locally closed subset
$$\Sigma^{\circ}(X) := \{\mathbb{L} \in \mathbb{G}(1,\mathbb{P}^r) \mid d-r+3 \leq \mathrm{length}(X \cap\mathbb{L}) < \infty\} \subset \mathbb{G}(1,\mathbb{P}^r)$$
of proper $(d-r+3)$-secant lines to $X$, so that
$\overline{\Sigma^{\circ}(X)} = \Sigma(X)$. The previous observation
now may be written in the form
$$\Sigma^{\circ}(X) = \{\mathbb{L} \in \mathbb{G}(1,\mathbb{P}^r) \mid \mathbb{L} \mbox{ is a line section of } Z \mbox{ and } \mathbb{L} \nsubseteq X\}.$$

(C) Suppose that $a \geq 2$ and let $\mathbb{L}$ be the unique line
section $S(1)$ of $Z$. If $\mathbb{L}$ is contained in $X$, then we
have $\Sigma^{\circ}(X) = \emptyset$ and hence $\Sigma (X) =
\emptyset$. So, in this case $X$ is a surface of maximal regularity,
having no proper extremal secant line at all. Next, if $\mathbb{L}$
is not contained in $X$, then we have $\Sigma^{\circ}(X) =
\Sigma(X)=\{ \mathbb{L} \}$, and hence $\dim\big(\Sigma(X)\big) =
0$.

(D) Suppose next, that $a =1$ and $b \geq 2$. Then, by part (B), we have
\begin{equation*}
\Sigma^{\circ}(X) = \{\mathbb{L} \in S(1,1) \mid \mathbb{L} \mbox{ is a line section of } Z \mbox{ and } \mathbb{L} \nsubseteq X\}.
\end{equation*}
Since $X \neq S(1,1)$, all but finitely many line sections of $Z$ are proper $(d-r+3)$-secant lines to $X$. This implies that $\dim\big(\Sigma(X)\big) = \dim\big(\Sigma^{\circ}(X)\big) = 1$ and that $\mathbb{F}^{+}(X)$ is exactly equal to $S(1,1)$.
\end{construction and examples}

\subsection*{Surfaces of maximal sectional regularity of type I} We now provide a few examples for surfaces of maximal sectional regularity of type I, focusing on the
various Betti tables which may occur. These tables have been computed by means of the Computer Algebra System Singular \cite{GrPf}. We use the divisorial description of surfaces of type I given in Theorem~\ref{theorem 2.1}

\begin{example}
\label{example:t1}
Let $X \subset W := S(1,1,1) \subset \mathbb{P}^{5}$ be a divisor which is linearly equivalent to $H + (d-3)F$. Then $X$ is given by an isomorphic projection of a smooth rational normal scroll $\widetilde{X} \subset \mathbb{P}^{d+1}$ (see \cite[Lemma 3.1]{P}).\\
(A) Let $d=8$ and assume that $X$ has the parametrization
\begin{equation*}
\{\big(u^7s:u^7t:vs^7:vs^6t:vst^6:vst^7\big) \mid (s,t), (u,v) \in \Bbbk^2 \setminus\{(0,0)\}\}.
\end{equation*}
Then, $X$ has the following Betti table.
\[
\begin{tabular}{|c|c|c|c|c|c|c|}\hline
                          \multicolumn{2}{|c||}{$i$}& $1$&$2$&$3$&$4$&$5$  \\\cline{1-7}
             \multicolumn{2}{|c||}{$\beta_{i,1}$}& $3$&$2$&$0$&$0$&$0$    \\\cline{1-1}
             \multicolumn{2}{|c||}{$\beta_{i,2}$}& $0$& $0$&$0$& $0$&$0$  \\\cline{1-1}
             \multicolumn{2}{|c||}{$\beta_{i,3}$}& $0$& $0$&$0$& $0$&$0$  \\\cline{1-1}
             \multicolumn{2}{|c||}{$\beta_{i,4}$}& $0$&$0$&$0$&$0$&$0$     \\\cline{1-1}
             \multicolumn{2}{|c||}{$\beta_{i,5}$}& $21$&$70$&$87$&$48$&$10$\\\cline{1-7}
             \end{tabular}
\]

(B) Let $d=9$ and assume that $X$ has the parametrization
\begin{equation*}
\{\big(u^7s:u^7t:vs^8:vs^7t:vst^7:vt^8 \big) \mid (s,t), (u,v) \in
\Bbbk^2 \setminus\{(0,0)\}\}.
\end{equation*}
Then, we get the following Betti table for $X$.
\[
\begin{tabular}{|c|c|c|c|c|c|c|}\hline
                          \multicolumn{2}{|c||}{$i$}& $1$&$2$&$3$&$4$&$5$  \\\cline{1-7}
             \multicolumn{2}{|c||}{$\beta_{i,1}$}& $3$&$2$&$0$&$0$&$0$     \\\cline{1-1}
             \multicolumn{2}{|c||}{$\beta_{i,2}$}& $0$& $0$&$0$& $0$&$0$  \\\cline{1-1}
             \multicolumn{2}{|c||}{$\beta_{i,3}$}& $0$& $0$&$0$& $0$&$0$  \\\cline{1-1}
             \multicolumn{2}{|c||}{$\beta_{i,3}$}& $0$& $0$&$0$& $0$&$0$  \\\cline{1-1}
             \multicolumn{2}{|c||}{$\beta_{i,4}$}& $0$&$0$&$0$&$0$&$0$     \\\cline{1-1}
             \multicolumn{2}{|c||}{$\beta_{i,5}$}& $28$&$96$&$123$&$70$&$15$\\\cline{1-7}
             \end{tabular}
\]

(C) Let $d=10$ and assume that $X$ has the parametrization
\begin{equation*}
\{\big(u^9s:u^9t:vs^9:vs^8t:vst^8:vst^9\big) \mid (s,t), (u,v) \in \Bbbk^2 \setminus\{(0,0)\}\}.
\end{equation*}
Then, the Betti table of $X$ is as given below.
\[
\begin{tabular}{|c|c|c|c|c|c|c|}\hline
                          \multicolumn{2}{|c||}{$i$}& $1$&$2$&$3$&$4$&$5$  \\\cline{1-7}
             \multicolumn{2}{|c||}{$\beta_{i,1}$}& $3$&$2$&$0$&$0$&$0$     \\\cline{1-1}
             \multicolumn{2}{|c||}{$\beta_{i,2}$}& $0$& $0$&$0$& $0$&$0$  \\\cline{1-1}
             \multicolumn{2}{|c||}{$\beta_{i,3}$}& $0$& $0$&$0$& $0$&$0$  \\\cline{1-1}
              \multicolumn{2}{|c||}{$\beta_{i,3}$}& $0$& $0$&$0$& $0$&$0$  \\\cline{1-1}
             \multicolumn{2}{|c||}{$\beta_{i,3}$}& $0$& $0$&$0$& $0$&$0$  \\\cline{1-1}
             \multicolumn{2}{|c||}{$\beta_{i,4}$}& $0$&$0$&$0$&$0$&$0$     \\\cline{1-1}
             \multicolumn{2}{|c||}{$\beta_{i,5}$}& $36$  &$126$&$165$  &$96$&$21$ \\\cline{1-7}
             \end{tabular}
\]
\end{example}

\subsection*{Surfaces of maximal sectional regularity of type II} Next, we aim to present examples which concern surfaces $X \subset \mathbb{P}^r$ $(r \geq 5)$ of maximal sectional regularity of degree $d > r$ and of type II. Set $Y := X \cup \mathbb{F}(X)$ and recall that $\tau(X)$ denotes the pair $(\depth (X), \depth (Y))$. We will construct a few examples of $X$, having all possible $\tau(X)$ listed in Theorem \ref{4.14'' Theorem}(c).  \\

\begin{construction and examples}\label{7.2 Construction and Examples} (A) We assume that the characteristic of the base field $\Bbbk$ is zero. Let $a,b$ be integers such that $3 \leq a \leq b$ and consider the standard smooth rational normal surface scroll $\widetilde{X} := S(a,b) \subset \mathbb{P}^{a+b+1}$. We shall construct surfaces of maximal sectional regularity of type II by projecting $\widetilde{X}$ from appropriate linear subspaces of $\mathbb{P}^{a+b+1}$. The occurring Betti diagrams have been computed by means of the Computer Algebra System Singular \cite{GrPf}.

(B) Let $\Lambda$ be an $(a-3)$-dimensional subspace of $\langle S(a) \rangle = \mathbb{P}^a$ which avoids $S(a)$ and let $X \subset \mathbb{P}^{b+3}$ be the linear projection of $\widetilde{X}$ from $\Lambda$. Observe that this linear projection maps $\langle S(a) \rangle$ onto a plane $\mathbb{P}^2 = \mathbb{F} \subset \mathbb{P}^{b+3}$. Suppose that this projection maps $S(a)$ birationally onto a plane curve $C_a \subset \mathbb{F}$ of degree $a$. Since $X \subset \mathbb{P}^{b+3}$ is a surface of degree $a+b$, we have $\reg (X) \leq a$ by Theorem \ref{4.14'' Theorem}(a). On the other hand, a general line on $\mathbb{F}$ is a proper $a$-secant line to $X$. Therefore ${\rm reg}(X)=a$, $X$ is a surface of maximal sectional regularity of type II and $\mathbb{F}(X) = \mathbb{F}$. Finally, we get $\tau(X) = (2,3)$ by Theorem \ref{4.14'' Theorem}(c).

(C) Assume that $b \geq 3$. Let $\Lambda$ be a $(b-3)$-dimensional subspace of $\langle S(b) \rangle = \mathbb{P}^b$ which avoids $S(b)$ and let $X \subset \mathbb{P}^{a+3}$ be the linear projection of $\widetilde{X}$ from $\Lambda$. So, this linear projection maps $\langle S(b) \rangle$ onto a plane $\mathbb{P}^2 = \mathbb{F} \subset \mathbb{P}^{a+3}$. From now on, we assume that this projection maps $S(b)$ birationally onto a plane curve $C_b \subset \mathbb{F}$ of degree $b$. Then as in (B), one can see that $X \subset \mathbb{P}^r$ is a surface of maximal sectional regularity of type II and $\mathbb{F}(X) = \mathbb{F}$. If $b \leq a+2$, then we have $\tau(X) = (2,3)$ by Theorem \ref{4.14'' Theorem}(c).

(D) From now on, we assume that $b \geq a+3$, and we will vary the projection center $\Lambda$. To do so, we first consider the canonical isomorphism
\begin{equation*}
\kappa: \mathbb{P}^1  \rightarrow  S(b), \quad [s:t] \mapsto [0:\ldots:0:s^b:s^{b-1}t:\ldots:st^{b-1}:t^b] \in \mathbb{P}^{a+b+1}.
\end{equation*}
Then, we choose a homogeneous polynomial $f \in \Bbbk[s,t]$ of degree $b$ which is not divisible by $s$ and by $t$. Now, let
$$\Lambda_f = \mathbb{P}^{b-3} \subset \langle S(b)\rangle \setminus S(b)$$
be such that the composition map
$$\varphi_f := \pi_{\Lambda_f} \circ \kappa: \mathbb{P}^1 \rightarrow C_b \subset \mathbb{F} = \mathbb{P}^2$$
of the linear projection map $$\pi_{\Lambda_f}: \mathbb{P}^{a+b+1}\setminus \Lambda_f \twoheadrightarrow \mathbb{P}^{a+3}$$ with the above map $\kappa$ sends $[s:t]$ to $[s^b : f : t^b]$.
Let
$$X_f := \pi_{\Lambda_f}(\widetilde{X}) \subset \mathbb{P}^{a+2}$$
denote the image of the scroll $\widetilde{X}$ under the linear projection map $\pi_{\Lambda_f}$ centered at $\Lambda_f$. Then, we may write
\begin{equation*}
X_f := \{ [us^a:us^{a-1}t:\ldots:ust^{a-1}:ut^a:vs^b:vf(s,t):vt^b ] \mid (s,t), (u,v) \in K^2 \setminus\{(0,0)\}\}.
\end{equation*}
After an appropriate choice of $f$, this latter presentation is accessible to syzygetic computations.

\end{construction and examples}
\vspace{0.2 cm}

\begin{example}\label{7.3 Example} Let $(a,b)=(3,5)$ and $f := s^4t+s^3t^2+s^2t^3+st^4$. Then $X_f \subset \mathbb{P}^6$ is of degree $d = 8 (= 2r-4)$ and the graded
Betti numbers $\beta_{i,j} = \beta_{i,j}(X)$ of $X$ are as presented in the following table.

\begin{table}[hbt]
\begin{center}
\begin{tabular}{|c|c|c|c|c|c|c|c|}\hline
                          \multicolumn{2}{|c||}{$i$}& $1$&$2$&$3$&$4$&$5$&$6$  \\\cline{1-8}
             \multicolumn{2}{|c||}{$\beta_{i,1}$}& $6$&$8$&$3$&$0$&$0$&$0$ \\\cline{1-1}
             \multicolumn{2}{|c||}{$\beta_{i,2}$}& $4$&$12$&$12$&$4$&$0$&$0$\\\cline{1-1}
             \multicolumn{2}{|c||}{$\beta_{i,3}$}& $0$&$0$&$0$&$0$&$0$&$0$\\\cline{1-1}
             \multicolumn{2}{|c||}{$\beta_{i,4}$}& $1$&$4$&$6$&$4$&$1$&$0$\\\cline{1-8}
             \end{tabular}
\end{center}
\end{table}

By Proposition~\ref{prop:BettiNumbers} (a) it follows from this graded Betti diagram of $X$, that
$$\tau(X)=(2,3).$$
\end{example}

\begin{example}\label{7.4 Example}
Let $(a,b)=(3,8)$ and consider $X_{f_i} \subset \mathbb{P}^6$ $(i=1,2,3)$ for the following choices of $f_i$:
\begin{enumerate}
\item[$(1)$] $f_1 = s^7t+s^6t^2+s^5t^3+s^4t^4+s^3t^5+s^2t^6+st^7$,
\item[$(2)$] $f_2 = s^7t+s^6t^2+s^5t^3+s^4t^4+s^3t^5+s^2t^6$, and
\item[$(3)$] $f_3 = s^7t+s^6t^2+s^5t^3+s^4t^4$.
\end{enumerate}
Then $X_{f_i} \subset \mathbb{P}^6$ is of degree $d = 11 \quad (= 2r-1 = 3r-7)$ for all $i=1,2,3$. The graded Betti diagrams of $X_{f_1}$, $X_{f_2}$ and
$X_{f_3}$ are given respectively in the three tables below.

\[
\begin{tabular}{|c|c|c|c|c|c|c|c|}\hline
                          \multicolumn{2}{|c||}{$i$}& $1$&$2$&$3$&$4$&$5$&$6$  \\\cline{1-8}
             \multicolumn{2}{|c||}{$\beta_{i,1}$}& $6$&$8$&$3$&$0$&$0$&$0$ \\\cline{1-1}
             \multicolumn{2}{|c||}{$\beta_{i,2}$}& $0$& $0$&$0$& $0$&$0$&$0$  \\\cline{1-1}
             \multicolumn{2}{|c||}{$\beta_{i,3}$}& $4$&$12$&$12$&$4$&$0$&$0$\\\cline{1-1}
             \multicolumn{2}{|c||}{$\beta_{i,4}$}& $0$&$0$&$0$&$0$&$0$&$0$\\\cline{1-1}
             \multicolumn{2}{|c||}{$\beta_{i,5}$}& $1$&$4$&$6$&$4$&$1$&$0$\\\cline{1-1}
             \multicolumn{2}{|c||}{$\beta_{i,6}$}& $0$&$0$&$0$&$0$&$0$&$0$\\\cline{1-1}
             \multicolumn{2}{|c||}{$\beta_{i,7}$}& $1$&$4$&$6$&$4$&$1$&$0$\\\cline{1-8}
             \end{tabular}
\]


\[
\begin{tabular}{|c|c|c|c|c|c|c|c|}\hline
                          \multicolumn{2}{|c||}{$i$}& $1$&$2$&$3$&$4$&$5$&$6$  \\\cline{1-8}
             \multicolumn{2}{|c||}{$\beta_{i,1}$}& $5$&$5$&$0$&$0$&$0$&$0$ \\\cline{1-1}
             \multicolumn{2}{|c||}{$\beta_{i,2}$}& $1$& $0$&$1$& $0$&$0$&$0$  \\\cline{1-1}
             \multicolumn{2}{|c||}{$\beta_{i,3}$}& $1$&$9$&$11$&$4$&$0$&$0$\\\cline{1-1}
             \multicolumn{2}{|c||}{$\beta_{i,4}$}& $4$&$18$&$32$&$28$&$12$&$2$\\\cline{1-1}
             \multicolumn{2}{|c||}{$\beta_{i,5}$}& $0$&$0$&$0$&$0$&$0$&$0$\\\cline{1-1}
             \multicolumn{2}{|c||}{$\beta_{i,6}$}& $0$&$0$&$0$&$0$&$0$&$0$\\\cline{1-1}
             \multicolumn{2}{|c||}{$\beta_{i,7}$}& $1$&$4$&$6$&$4$&$1$&$0$\\\cline{1-8}
             \end{tabular}
\]

\[
\begin{tabular}{|c|c|c|c|c|c|c|}\hline
                          \multicolumn{2}{|c||}{$i$}& $1$&$2$&$3$&$4$&$5$  \\\cline{1-7}
             \multicolumn{2}{|c||}{$\beta_{i,1}$}& $3$&$2$&$0$&$0$&$0$ \\\cline{1-1}
             \multicolumn{2}{|c||}{$\beta_{i,2}$}& $10$& $27$&$24$& $7$&$0$  \\\cline{1-1}
             \multicolumn{2}{|c||}{$\beta_{i,3}$}& $0$&$0$&$0$&$0$&$0$\\\cline{1-1}
             \multicolumn{2}{|c||}{$\beta_{i,4}$}& $0$&$0$&$0$&$0$&$0$\\\cline{1-1}
             \multicolumn{2}{|c||}{$\beta_{i,5}$}& $0$&$0$&$0$&$0$&$0$\\\cline{1-1}
             \multicolumn{2}{|c||}{$\beta_{i,6}$}& $0$&$0$&$0$&$0$&$0$\\\cline{1-1}
             \multicolumn{2}{|c||}{$\beta_{i,7}$}& $1$&$4$&$6$&$4$&$1$\\\cline{1-7}
             \end{tabular}
\]
By Proposition~\ref{prop:BettiNumbers} (a) we can see from these tables that
$$\tau (X_{f_1}) = (2,2), \quad \tau(X_{f_2}) = (1,1) \mbox{ and  } \tau (X_{f_3}) = (2,3).$$
\end{example}

\begin{example}\label{7.5 Example} Let $(a,b)=(3,9)$ and consider $X_{f_i} \subset \mathbb{P}^6$, $(i=1,2)$ for the two choices
\begin{enumerate}
\item[$(1)$] $f_1 = s^8t+s^7t^2+s^6t^3+s^5t^4+s^4t^5+s^3t^6+s^2t^7+st^8$ and
\item[$(2)$] $f_2 = s^8t+s^7t^2+s^6t^3+s^5t^4+s^4t^5+s^3t^6+s^2t^7$.
\end{enumerate}
Then $X_{f_i} \subset \mathbb{P}^6$ is of degree $d = 12 \quad (= 2r = 3r-6)$ for $i=1,2$. The graded Betti diagrams of $X_{f_1}$ and $X_{f_2}$ are
given respectively in the tables below.

\[
\begin{tabular}{|c|c|c|c|c|c|c|c|}\hline
                          \multicolumn{2}{|c||}{$i$}& $1$&$2$&$3$&$4$&$5$&$6$  \\\cline{1-8}
             \multicolumn{2}{|c||}{$\beta_{i,1}$}& $6$&$8$&$3$&$0$&$0$&$0$ \\\cline{1-1}
             \multicolumn{2}{|c||}{$\beta_{i,2}$}& $0$& $0$&$0$& $0$&$0$&$0$  \\\cline{1-1}
             \multicolumn{2}{|c||}{$\beta_{i,3}$}& $2$&$4$&$0$&$0$&$0$&$0$\\\cline{1-1}
             \multicolumn{2}{|c||}{$\beta_{i,4}$}& $1$&$4$&$10$&$6$&$1$&$0$\\\cline{1-1}
             \multicolumn{2}{|c||}{$\beta_{i,5}$}& $0$&$0$&$0$&$0$&$0$&$0$\\\cline{1-1}
             \multicolumn{2}{|c||}{$\beta_{i,6}$}& $1$&$4$&$6$&$4$&$1$&$0$\\\cline{1-1}
             \multicolumn{2}{|c||}{$\beta_{i,7}$}& $0$&$0$&$0$&$0$&$0$&$0$\\\cline{1-1}
             \multicolumn{2}{|c||}{$\beta_{i,8}$}& $1$&$4$&$6$&$4$&$1$&$0$\\\cline{1-8}
             \end{tabular}
\]
\[
\begin{tabular}{|c|c|c|c|c|c|c|c|}\hline
                          \multicolumn{2}{|c||}{$i$}& $1$&$2$&$3$&$4$&$5$&$6$  \\\cline{1-8}
             \multicolumn{2}{|c||}{$\beta_{i,1}$}& $5$&$5$&$0$&$0$&$0$&$0$ \\\cline{1-1}
             \multicolumn{2}{|c||}{$\beta_{i,2}$}& $0$& $0$&$1$& $0$&$0$&$0$  \\\cline{1-1}
             \multicolumn{2}{|c||}{$\beta_{i,3}$}& $5$&$15$&$15$&$5$&$0$&$0$\\\cline{1-1}
             \multicolumn{2}{|c||}{$\beta_{i,4}$}& $0$&$0$&$0$&$0$&$0$&$0$\\\cline{1-1}
             \multicolumn{2}{|c||}{$\beta_{i,5}$}& $5$&$23$&$42$&$38$&$17$&$3$\\\cline{1-1}
             \multicolumn{2}{|c||}{$\beta_{i,6}$}& $0$&$0$&$0$&$0$&$0$&$0$\\\cline{1-1}
             \multicolumn{2}{|c||}{$\beta_{i,7}$}& $0$&$0$&$0$&$0$&$0$&$0$\\\cline{1-1}
             \multicolumn{2}{|c||}{$\beta_{i,8}$}& $1$&$4$&$6$&$4$&$1$&$0$\\\cline{1-8}
             \end{tabular}
\]

\vspace{.2cm} 
By Proposition~\ref{prop:BettiNumbers} (a) we can verify that
$$\tau(X_{f_1}) = (2,2) \mbox{ and } \tau(X_{f_2}) = (1,1).$$
\end{example}

\begin{problem and remark}\label{7.6 Problem and Remark} (A) Let $5 \leq r <d$ and let $X \subset \mathbb{P}^r$ be a non-degenerate surface of degree $d$ which is of maximal
sectional regularity. We consider the three conditions
\begin{itemize}
\item[\rm{(i)}]   $N(X) \leq d-r$.
\item[\rm{(ii)}]  $\beta_{1,d-r+2}(X) = 1$.
\item[\rm{(iii)}] $\mathbb{F}(X) = \mathbb{P}^2$ or -- equivalently -- $X$ is of type II.
\end{itemize}
(B) By the implication (i) $\Rightarrow$ (iii) given in statement (a) of Theorem~\ref{4.17'' Proposition} we have the implication (i) $\Rightarrow$ (ii) among the
above three conditions. By the implication (i) $\Rightarrow$ (ii) given in statement (b) of Theorem~\ref{4.17'' Proposition} we have the implication (ii) $\Rightarrow$ (iii)
among the above three conditions. \\
We expect, that the converse of both implications holds but could not prove this. So we aim to pose the problem
\begin{itemize}
\item[\rm{(P)}] \textit{ Are the three conditions} (i), (ii) \textit{and} (iii) \textit{of part} (A) \textit{equivalent ?}
\end{itemize}
Observe, that in view of Remark~\ref{remark:normality} (A) an affirmative answer to this would also answer affirmatively the question, whether for surfaces of type II, the extended extremal variety
and the extremal variety of $X$ coincide (see Notation and Reminder~\ref{4.2'' Notation and Reminder}), hence the question whether
\begin{itemize}
\item[\rm{(Q)}] $\quad \mathbb{F}^{+}(X) = \mathbb{F}(X)$ \textit{for $X$ of type II ?}
\end{itemize}
Obviously, this latter question would been affirmatively answered if we could answer affirmatively the question
\begin{itemize}
\item[\rm{(R)}] $\quad {}^*\Sigma(X) = \Sigma(X)$ \textit{for $X$ of type II ?}
\end{itemize}
\end{problem and remark}

\subsection*{Acknowledgement} The first named author thanks to the Korea University Seoul, to the Mathematisches Forschungsinstitut Oberwolfach, to the Martin-Luther
Universit\"at Halle and to the Deutsche Forschungsgemeinschaft for
their hospitality and the financial support provided during the
preparation of this work.The second named author was supported by
Basic Science Research Program through the National Research
Foundation of Korea(NRF) funded by the Ministry of
Education(2014008404). The third named author was supported by the
NRF-DAAD GEnKO Program (NRF-2011-0021014). The fourth named author
thanks to the Korea University Seoul, to the Mathematisches
Forschungsinstitut Oberwolfach and to the Deutsche
Forschungsgemeinschaft for their hospitality respectively financial
support offered during the preparation of this work.

\end{document}